\documentclass[1 leqno,11pt,psfig]{amsart}
\usepackage{amssymb, amsmath,amsmath,latexsym,amssymb,amsfonts,amsbsy, amsthm,mathtools,graphicx,CJKutf8,CJKnumb,CJKulem,color}
\usepackage{graphicx,epsfig}
\usepackage{graphicx}
\usepackage{amssymb}
\usepackage{graphicx}
\usepackage{graphicx,epsfig}
\usepackage{amsmath}
\usepackage{mathrsfs}
\usepackage{amssymb}

\usepackage{amssymb,mathrsfs,amsfonts,amsmath,amsbsy,tikz}\usepackage{fontenc}\usepackage{textcomp}


\setlength{\oddsidemargin}{0mm}
\setlength{\evensidemargin}{0mm} \setlength{\topmargin}{0mm}
\setlength{\textheight}{220mm} \setlength{\textwidth}{155mm}

\numberwithin{equation}{section}

\allowdisplaybreaks



\let\S=\Sigma



\def\R{\mathbf R}

\def\C{\mathbb{C}}
\def\R{\mathbb{R}}

\def\A{\mathbf{A}}
\def\B{\mathbf{B}}

\newcommand{\beq}{\begin{equation}}
\newcommand{\eeq}{\end{equation}}
\newcommand{\ben}{\begin{eqnarray}}
\newcommand{\een}{\end{eqnarray}}
\newcommand{\beno}{\begin{eqnarray*}}
\newcommand{\eeno}{\end{eqnarray*}}

\let\ld=\lambda

\let\af=\alpha
\let\bt=\beta

\let\Ld=\Lambda

\newcommand{\ba}{\begin{array}}
\newcommand{\ea}{\end{array}}
\newcommand{\be}{\begin{equation}}
\newcommand{\ee}{\end{equation}}
\newcommand{\ban}{\begin{eqnarray*}}
\newcommand{\ean}{\end{eqnarray*}}

\numberwithin{equation}{section}

\newtheorem{theorem}{Theorem}[section]

\newtheorem{lemma}[theorem]{Lemma}
\newtheorem{proposition}[theorem]{Proposition}
\newtheorem{corol}[theorem]{Corollary}
\newtheorem{remark}[theorem]{Remark}


\begin{document}
\begin{CJK*}{UTF8}{gkai}
\title[Discrete Sturm-Liouville problems] {Discrete Sturm-Liouville problems: singularity of the $n$-th eigenvalue  with application to Atkinson type}

\author{Guojing Ren}
\address{School of Mathematics and Quantitative Economics,
 Shandong  University of Finance and Economics, Jinan, Shandong 250014, P. R.
China}
\email{gjren@sdufe.edu.cn}

\author{Hao Zhu}
\address{Chern Institute of Mathematics, Nankai University, Tianjin 300071, P. R. China}

\email{haozhu@nankai.edu.cn}

\maketitle

\begin{abstract}
 In this paper, we  characterize  singularity of the $n$-th eigenvalue of self-adjoint discrete   Sturm-Liouville problems in any dimension.
For a fixed Sturm-Liouville equation, we  completely characterize   singularity of the $n$-th eigenvalue. For a fixed boundary condition, unlike in the continuous case,  the $n$-th eigenvalue exhibits jump phenomena  and we describe the singularity   under a non-degenerate assumption.
 Compared with the continuous   case in \cite{Hu-Liu-Wu-Zhu2018,Kong2}, the singular set here  is involved heavily  with  coefficients of the Sturm-Liouville equations. This, along with  arbitrariness of the dimension, causes difficulty when dividing areas in layers of the considered space such that the $n$-th eigenvalue has the same singularity in any given area. We study the singularity by partitioning and analyzing the local coordinate systems, and provide a Hermitian matrix  which can determine the areas' division.
  To  prove the asymptotic behavior of the $n$-th eigenvalue, we generalize the
   method developed  in \cite{Zhu2}  to any dimension. Finally, by transforming  the Sturm-Liouville problem of Atkinson type in
   any  dimension to a discrete one, we can not only determine the number of eigenvalues, but also apply our approach above to obtain the complete characterization of  singularity of the $n$-th eigenvalue for the Atkinson type.
\end{abstract}

{\bf \it Keywords}:\ Discrete  Sturm-Liouville problem;  the $n$-th eigenvalue; singularity;  boundary condition;  Atkinson type.

{2010 {\bf \it Mathematics Subject Classification}}: 39A12, 34B24, 39A70.

\section{Introduction}

Discrete Sturm-Liouville problems come from several physical models, including the vibrating string and random walk with discrete time process
\cite{Atkinson1,Jirari}.
We briefly introduce these two models. Suppose that a weightless string bears  $l$ particles with masses $m_1,\cdots, m_{l}$,  and the horizontal distance between $m_i$ and $m_{i+1}$ is $1/c_{i}$, $1\leq i\leq l-1$. Moreover, the string extends to length $1/c_{l}$ beyond $m_{l}$ and $1/c_{0}$ beyond $m_1$. Let $s_i$, $1\leq i\leq l$, be the displacement of the particle $m_i$ at a fixed time. Both ends are pinned down (i.e., $s_0=s_{l+1}=0$).
Since the particle $m_i$ does not move horizontally, we may assume that the horizontal component of the tension at  $m_i$ is both unit from the left and right, respectively.
 Then the restoring forces, induced by the  vertical component of the tension from the left and right, are $c_{i-1}(s_i-s_{i-1})$ and $c_i(s_i-s_{i+1})$, respectively.
\begin{center}
 \begin{tikzpicture}[scale=0.56]
\draw [-](-2, 0)--(2, 0)node[right]{};
\path [-](-2,0) edge [-,dotted](-2,2)  [line width=0.7pt];
\path [-](0,0) edge [-,dotted](0,5)  [line width=0.7pt];
\path [-](2,0) edge [-,dotted](2,3)  [line width=0.7pt];
\draw [-](-2, 2)--(0, 5)node[right]{};
\draw [-](0, 5)--(2, 3)node[right]{};
 \node (a) at (-2.5,2.5) {\small$m_{i-1}$};
  \node (a) at (0,5.5) {\small$m_{i}$};
   \node (a) at (2.5,3.5) {\small$m_{i+1}$};
   \node (a) at (-1,-0.5) {\small$1/c_{i-1}$};
   \node (a) at (1,-0.5) {\small$1/c_{i}$};
    \node (a) at (-1.4,1) {\small$s_{i-1}$};
    \node (a) at (0.3,1) {\small$s_{i}$};
     \node (a) at (2.6,1) {\small$s_{i+1}$};
 \end{tikzpicture}
\end{center}\vspace{-0.1cm}
 \begin{center}\vspace{-0.1cm}
   {\small {\bf Figure 1.} }
  \end{center}\vspace{-0.2cm}
Therefore, by Newton's second law,
\begin{align}\label{vibrating string1}-m_i{d^2\over dt^2} s_i=c_{i-1}(s_i-s_{i-1})+c_i(s_i-s_{i+1})=-\nabla(c_i\Delta s_i),\;1\leq i\leq l,\end{align}
where $\Delta s_i=s_{i+1}-s_{i}$ and $\nabla s_i=s_i-s_{i-1}$.
 Taking $s_i=y_i\cos(\omega t),$ where  $y_i$ is the amplitude  of $m_i$,
  we obtain from \eqref{vibrating string1} that
  \begin{align*}
  -\nabla(c_i\Delta y_i)=\lambda m_i y_i, \;1\leq i\leq l,
  \end{align*}
  where $\lambda=\omega^2$. Since the boundary condition corresponds to the assumption that both ends are pinned down,  this system becomes a self-adjoint discrete Sturm-Liouville problem.

Another model of the discrete Sturm-Liouville equation is random walking  with discrete time process from probability theory.

\begin{center}
 \begin{tikzpicture}[scale=0.8]
\draw [-](-8, 0)--(8, 0)node[right]{};
\draw [-](-8, -0.1)--(-8, 0.1)node[right]{};
\draw [-](-5, -0.1)--(-5, 0.1)node[right]{};
\draw [-](0, -0.1)--(0, 0.1)node[right]{};
\draw [-](8, -0.1)--(8, 0.1)node[right]{};
\draw [-](5, -0.1)--(5, 0.1)node[right]{};
\node (a) at (-8.5,0.5) {\small$\longleftarrow$};
\node (a) at (-8.5,1) {\small$\beta_1$};
\node (a) at (-7.5,0.5) {\small$\longrightarrow$};
\node (a) at (-7.5,1) {\small$\alpha_1$};
\node (a) at (-5.5,0.5) {\small$\longleftarrow$};
\node (a) at (-5.5,1) {\small$\beta_2$};
\node (a) at (-4.5,0.5) {\small$\longrightarrow$};
\node (a) at (-4.5,1) {\small$\alpha_2$};
\node (a) at (-2.5,0.5) {\small$\cdots\cdots$};
\node (a) at (-0.5,0.5) {\small$\longleftarrow$};
\node (a) at (-0.5,1) {\small$\beta_i$};
\node (a) at (0.5,0.5) {\small$\longrightarrow$};
\node (a) at (0.5,1) {\small$\alpha_i$};
\node (a) at (8.5,0.5) {\small$\longrightarrow$};
\node (a) at (8.5,1) {\small$\alpha_{l}$};
\node (a) at (7.5,0.5) {\small$\longleftarrow$};
\node (a) at (7.5,1) {\small$\beta_{l}$};
\node (a) at (5.5,0.5) {\small$\longrightarrow$};
\node (a) at (5.5,1) {\small$\alpha_{l-1}$};
\node (a) at (4.5,0.5) {\small$\longleftarrow$};
\node (a) at (4.5,1) {\small$\beta_{l-1}$};
\node (a) at (2.5,0.5) {\small$\cdots\cdots$};
\node (a) at (-8,-0.5) {\small$1$};
\node (a) at (-5,-0.5) {\small$2$};
\node (a) at (0,-0.5) {\small$i$};
\node (a) at (5,-0.5) {\small$l-1$};
\node (a) at (8,-0.5) {\small$l$};
 \end{tikzpicture}
\end{center}\vspace{-0.2cm}
 \begin{center}\vspace{-0.2cm}
   {\small {\bf Figure 2.} }
  \end{center}
Let a particle be in one of the $l$ positions $1,\cdots,l$ at $t=0$. Suppose that the particle is in position $i$ at $t=t_0$.
The rule of this random walking  is that the particle will move to $i+1$ at $t=t_0+1$ with a probability $\alpha_i$, move to $i-1$ at $t=t_0+1$ with a probability $\beta_i$, and stay  in position $i$ with a probability $1-\alpha_i-\beta_i$.
Moreover, if the particle moves to the left of position $1$, or to the right of position $l$, it is considered permanently lost.
So it is reasonable to set $\alpha_{0}=0$ and $\beta_{l+1}=0$.
Define $p_{rs}(j)$ as the probability of the particle being in position $s$ at $t=j$ and starting in position $r$ at $t=0$. Then we have
$p_{rs}(0)=\delta_{rs}$, and
\begin{align}\label{random-walk}
p_{rs}(j+1)=\alpha_{s-1}p_{r,s-1}(j)+\beta_{s+1}p_{r,s+1}(j)+(1-\alpha_s-\beta_s)p_{rs}(j), \; j\geq0,
\end{align}
where $\delta_{rs}=1$ if $r=s$, and $\delta_{rs}=0$ if $r\neq s$. Let $P(j)=(p_{rs}(j))_{1\leq r,s\leq l}$, $j\geq 0$, and
\begin{align*}
T=\left(
          \begin{array}{cccccccc}
            -\alpha_1-\beta_1 & \alpha_1 &  &&  \\
            \beta_2 &  -\alpha_2-\beta_2& \alpha_2 &  &  \\
             & \beta_3 &  -\alpha_3-\beta_3 &\ddots &  \\
             &  &  \ddots &  \ddots&\alpha_{l-1} \\
               &  &  & \beta_{l} &-\alpha_{l}-\beta_{l}
          \end{array}
        \right).
\end{align*}
Then $P(0)=I_l$ and  (\ref{random-walk}) is equivalent to
\begin{align*}
P(j+1)=P(j)(I_l+T),\;j\geq0.
\end{align*}
So $P(j+1)=(I_l+T)^j$, $j\geq 0$. However, the form $(I_l+T)^j$ provides little information on asymptotic form of  $P(j)$ for large $j$.
Instead, in the spectral theory, the eigenvalues and corresponding eigenfunctions of $T$ play an important role in studying properties of  $P(j)$ for any $j\geq1$. To find an eigenvalue $\lambda$ and the corresponding eigenfunction $(y_1,y_2,\cdots,y_{l})$ of $-T$, we need to study the self-adjoint discrete Sturm-Liouville equation
\begin{align*}
-\nabla(g_j\Delta y_j)=\lambda a_jy_j, \;1\leq j\leq l,
\end{align*}
with the boundary condition $y_{0}=y_{l+1}=0$, where  $g_j=\alpha_ja_j$ and $g_{j-1}=\beta_j a_j$.

Motivated by these two interesting models and recent interest on discrete equations \cite{Bohner1,Clark1,Jirari}, in this paper we consider  a general self-adjoint discrete $d$-dimensional Sturm-Liouville problem  for any $d\geq1$. It
consists of a  symmetric  discrete  Sturm-Liouville equation
\begin{equation}\label{Sturm-Liouville equation}
  -\nabla (P_i\Delta y_i)+Q_iy_i=\ld W_iy_i,\quad 1\leq i\leq N,
\end{equation}
and a self-adjoint boundary condition
\begin{equation}\label{boundary condition }
  A\left(
     \begin{array}{c}
       -y_0 \\
       y_N \\
     \end{array}
   \right)
   +B\left(
     \begin{array}{c}
       P_0\Delta y_0 \\
       P_N\Delta y_N \\
     \end{array}
   \right)=0,
\end{equation}
where $P=\{P_j\}_{j=0}^{N}$,
$Q=\{Q_i\}_{i=1}^{N}$ and $W=\{W_i\}_{i=1}^{N}$ are sequences of  $d\times d$ complex-valued matrices and satisfy
\begin{equation}\label{3}
 P_j,Q_i, W_i\; {\rm are\,\, Hermitian, \,\, } P_j\,{\rm is\,\,invertible, \,\,} W_i\,\, {\rm is \,\,positive\,\,  definite},
\end{equation}
 $0\leq j\leq N$, $\ld\in \C$ is the spectral parameter,  $N\ge 2$, $A$  and  $B$  are $2d\times 2d$ complex-valued  matrices such that
\begin{equation}\label{4}
  \textrm{rank}\,(A, B)=2d,\quad AB^*=BA^*.
\end{equation}

The spectrum of a self-adjoint discrete  Sturm-Liouville problem consists of real and finite eigenvalues, and thus can  be arranged in  the non-decreasing order.
The $n$-th eigenvalue can be considered as a function defined on the space of self-adjoint discrete  Sturm-Liouville problems or on its subset.  This function is not continuous in general, see  the $1$-dimensional case in \cite{Zhu2}. Here we call the set of all  discontinuity points in the considered space  to be the singular set, and call any element in the singular set to be a singular point.
The $n$-th eigenvalue exhibits jump phenomena near the singular points. Unlike only jumping to $-\infty$ in the continuous case, the $n$-th eigenvalue also blows up to $+\infty$ in the discrete case.

The aim of this paper is to determine the singular set and to completely provide the asymptotic behavior of the $n$-th eigenvalue near any fixed singular point for the discrete Sturm-Liouville problems.
As applications, we consider the Sturm-Liouville problem of Atkinson type, transform it into a discrete Sturm-Liouville problem, and then
apply the discrete method to completely characterize singularity of the $n$-th eigenvalue for the Atkinson type.
Though the $n$-th eigenvalue  jumps to $\pm\infty$ near the singular points in the Atkinson type as well as in the discrete case,
  the singular set  in the Atkinson type  is the same one as in the continuous case and is independent of coefficients of the Sturm-Liouville equations.  This leads tremendous difference with the  discrete case, where the singular set is involved heavily  with coefficients of the equations.

Singularity of the $n$-th eigenvalue of Sturm-Liouville problems has attracted a lot of attention (see \cite{Everitt1,Hu-Liu-Wu-Zhu2018,Kato1984,Kong2,Kong-Zettl1996,Zettl1,Zhu2} and their references) since Rellich \cite{Rellich1}.
Let us mention three  contributions to finding  the singular set of the $n$-th eigenvalue and
providing all the asymptotic behavior near each singular point.
 Kong, Wu, and Zettl completely characterized it for  the continuous  $1$-dimensional Sturm-Liouville problems,
while  Hu {\it et al.} gave the answer for  the continuous  $d$-dimensional case, where $d\geq2$.
Zhu and Shi obtained the desired result  for  the discrete $1$-dimensional case.
This paper is devoted to the discrete  case in any dimension.
We mention here that our result in Theorem \ref{main result discrete case}  for singularity of the $n$-th eigenvalue on the boundary
conditions is complete, while the conclusion in Theorem \ref{main result equation discrete case}  for singularity on the equations is partial due to the non-degenerate assumption
(\ref{non-degenerate assumption1})--(\ref{non-degenerate assumption2}).

Compared with the continuous Sturm-Liouville problems, the $n$-th eigenvalue in the discrete case is not continuously dependent on
the equations, and the criterion for  continuity of the $n$-th eigenvalue is  different due to the finiteness of the number of eigenvalues. This makes  the method used in the continuous case \cite{Hu-Liu-Wu-Zhu2018, Kong2}
 unable to apply to the  discrete case.
On the other hand, compared with the $1$-dimensional discrete case, the first difficulty for any dimensional case  is how to divide areas in layers of the considered space such that the $n$-th eigenvalue has the same singularity  in any given area.
Our method in this paper is to find some invertible elementary transformations
 converting the matrix, which determines the number of eigenvalues of the Sturm-Liouville problems, to  a Hermitian matrix.
The areas' division is then determined by the spectral information  of  this Hermitian matrix. The second difficulty is how to prove the asymptotic behavior of the $n$-th eigenvalue.
 Our approach  is
 first to prove the asymptotic behavior in a certain direction using the monotonicity of  continuous eigenvalue branches, and then
combine  the local topological property (geometric structure)  of the considered  space  with the perturbation theory of eigenvalues  to obtain  the whole asymptotic behavior.  This can be regarded as a generalization of
the method developed for $1$-dimensional discrete  case in
\cite{Zhu2} to any dimension.
Finally, though our method for the Atkinson type is by transforming the Sturm-Liouville problem   into a discrete one,  it turns out to be no singularity of the $n$-th eigenvalue on the equations for the Atkinson type.

The rest of this paper is organized as follows.
In Section 2, topology on the space of Sturm-Liouville equations, and that on the space of boundary
conditions are presented.
Properties of eigenvalues are given in Section 3. The number and multiplicity of eigenvalues are discussed in Subsection 3.1, continuous eigenvalue branches are constructed  and their properties are provided in Subsection 3.2, and  properties of the $n$-th eigenvalue are presented in Subsection 3.3.
In Section 4,
singularity of the $n$-th eigenvalue on the boundary conditions is completely characterized for a fixed equation in Subsection 4.1, while
singularity of the $n$-th eigenvalue on the equations is obtained for a fixed boundary condition under a non-degenerate assumption in Subsection 4.2. Sturm-Liouville problem of the Atkinson type is transformed to a discrete one, and singularity of the $n$-th eigenvalue is provided thoroughly in Section 5. Conclusions are given in Section 6.

\bigskip

\noindent\textbf{Notation.}

\bigskip

By $\R$ and $\C$ denote the set of all the real  and complex numbers, respectively.
The set of all $m\times n$ matrices over a field $\mathbb{F}$ is denoted by $\mathcal{M}_{m,n}(\mathbb{F})$,
and $\mathcal{M}_{n,n}(\mathbb{F})$ is abbreviated to $\mathcal{M}_n(\mathbb{F})$.
  $A^*$ is the complex conjugate transpose of $A\in \mathcal{M}_{m,n}(\mathbb{F})$, while $A^T$ is the  transpose of $A$.
$\mathcal{H}_{n}(\mathbb{F})$ is the set of all  $n\times n$ Hermitian matrices, while
$\mathcal{P}_{n}(\mathbb{F})$ is the set of all  $n\times n$ positive definite matrices over a field $\mathbb{F}$.
For a matrix  $S\in \mathcal{M}_{n}(\mathbb{F})$,  its entries and columns are
denoted by $s_{ij}$ and $s_j = (s_{1j},\cdots, s_{nj})^T$, respectively, $1\le i,j\le n$.
By $I_n$ denote the $n\times n$ unit matrix.  $\sharp(K)$ is the cardinality of the set $K$.
By $r^-(A)$, $r^0(A)$, and $r^+(A)$  denote the total multiplicity of  negative,
zero, and positive eigenvalues of  $A\in\mathcal{H}_n(\mathbb{C})$, respectively.

\section{Space of self-adjoint discrete   Sturm-Liouville problems}

In this section, we introduce the topology on the  space of self-adjoint discrete   Sturm-Liouville problems.

The space of discrete Sturm-Liouville equations is
\begin{equation*}
 \Omega_N^{\C} := \{\pmb\omega=\left(\{P_j^{-1}\}_{j=0}^{N}, \{Q_i\}_{i=1}^{N}, \{W_i\}_{i=1}^{N}\right)\in \left(\mathcal{M}_{d}(\C)\right)^{3N+1}: \, (\ref{3}) \;{\rm holds}\}
\end{equation*}
with the topology induced by $\C^{(3N+1)d^2}$.


Note that the space of self-adjoint boundary conditions is the same as the continuous case. Following \cite{Hu-Liu-Wu-Zhu2018},
it is exactly the quotient space
\begin{equation}\label{the space of self-adjoint boundary conditions}
 \mathcal{ B}^{\C} :=\lower3pt\hbox{${\rm
GL}(2d,\mathbb C)$}\backslash \raise2pt\hbox{$\mathcal{L}_{2d,4d}(\mathbb C)$},
\end{equation}
where
\begin{eqnarray*}
  \mathcal{L}_{2d,4d}(\C) :=\{(A, B)\in \mathcal{M}_{2d,4d}(\C): \, {\rm rank}(A, B)=2d, AB^*=BA^*\}
\end{eqnarray*}
and
\begin{equation*}
  {\rm GL}(2d,\C) := \{T\in \mathcal{M}_{2d}(\C) : \det T \ne  0\}.
\end{equation*}
The boundary condition in  $\mathcal{ B}^{\C}$ is denoted by  $[A\,|\,B]:=\{(TA\;|\;TB):T\in {\rm GL}(2d,\mathbb{C})\}$. Bold faced capital Latin letters, such as $\mathbf{A}$, are also used  for  boundary conditions.

Next we introduce the following form for the local coordinate systems on $\mathcal{B}^{\C}$.
Let $K$ be any subset of $\{1,2,\cdots,2d\}$.
 Denote
\begin{equation}\label{K1K2-definition}
  K_1=K\cap\{1,2,\cdots,d\},\quad  K_2=K\cap\{d+1,d+2,\cdots,2d\}.
\end{equation}
By $E_K$ denote the $4d\times4d$ matrix generated from $I_{4d}$ by multiplying $-1$ to the  $(k+2d)$-th column
and then exchanging the $k$-th and the $(k+2d)$-th columns  for each $k\in K$.
Then it has the following form:
\begin{equation}\label{construction of EK}
  E_{K}=\begin{pmatrix}E_{K,1}\\E_{K,2}\end{pmatrix}=\left(
          \begin{array}{cccccccc}
            E_{1} & 0 & I_d-E_{1} & 0 \\
            0 &  E_{2}& 0 &  I_d-E_{2}  \\
            E_{1}-I_d & 0 &  E_{1} & 0  \\
            0 &  E_{2}-I_d & 0 &  E_{2} \\
          \end{array}
        \right),
\end{equation}
where $E_{K,1},E_{K,2}\in\mathcal{M}_{2d,4d}(\mathbb{C})$,  $E_1=\{\af_1,\af_2,\cdots,\af_d\}$ and $E_2=\{\bt_1,\bt_2,\cdots,\bt_d\}$ are $d\times d$ diagonal  matrices with
\begin{align}\label{E1E2}
  \af_i=\left\{\begin{array}{cc}
          0 & {\rm if} \;i\in K_1, \\
          e_i & {\rm if} \;i\notin K_1,
        \end{array}\right.\quad
        \bt_i=\left\{\begin{array}{cc}
          0  & {\rm if} \;d+i\in K_2, \\
         e_i & {\rm if} \;d+i\notin K_2,
        \end{array}\right.
\end{align}
and $e_i$  is  the $i$-th column of    $I_d$.
Then
\begin{align}\label{EKrelation}
E_K^*J_{2d}E_K=J_{2d},\;\;E_KE_K^*=I_{4d},
\end{align}
where
\begin{align*}
{J}_{2d}=\left(
      \begin{array}{cc}
        0 & -I_{2d} \\
        I_{2d} & 0 \\
      \end{array}
    \right).
\end{align*}
We define
\begin{align}\label{co-sys}
\mathcal{ O}_{K}^{\C}:=\{[(S\,|\, I_{2d})E_{K}]:\; S\in \mathcal{H}_{2d}(\C)\}.
\end{align}
For $\mathbf{A}=[(S\,|\, I_{2d})E_{K}]\in \mathcal{ O}_{K}^{\C}$, we  denote $S$ by $S(\mathbf{A})$ to indicate its dependence on $\mathbf{A}$ if necessary. It is clear that $\mathcal{ O}_{K}^{\C}$ defined here coincides with that defined  in (2.1) of \cite{Hu-Liu-Wu-Zhu2018}.
It follows from Theorem 2.1 in \cite{Hu-Liu-Wu-Zhu2018} that
\begin{equation*}
  \mathcal{B}^{\C}=\bigcup_{K\subset \{1,2,\cdots,2d\}}  \mathcal{ O}_{K}^{\C}.
\end{equation*}
Moreover, $\mathcal{B}^{\mathbb{C}}$ is a connected and compact real-analytic manifold of dimension $4d^2$. The readers are also referred to \cite{Arnold1967,Arnold2000,Kong3,Long2002} for more details.

The product space $ \Omega_N^{\C} \times\mathcal{ B}^{\C} $ is the space of self-adjoint discrete Sturm-Liouville problems, and
$(\pmb\omega, \mathbf{A})$  is used to stand for an element in
$\Omega_N^{\C}  \times\mathcal{ B}^{\C} $ in the sequel.

\section{Properties of eigenvalues}

In this section, we study  properties of  eigenvalues of the self-adjoint discrete Sturm-Liouville problems.

\subsection{The number and multiplicity  of  eigenvalues.}
Let
\begin{equation*}
  l[0,N+1]:=\left\{y=\{y_i\}_{i=0}^{N+1}:\; y_i\in \C^{d}, \,0\leq i\leq N+1\right\}.
\end{equation*}
The initial value problem of (\ref{Sturm-Liouville equation}) has a unique solutions. More precisely,

\begin{lemma} Let  $z_{i_0}, \tilde z_{i_0}\in \C^{d}$ for some $1\leq i_0\leq N$. Then, for each $\ld\in\C$,  (\ref{Sturm-Liouville equation}) has a unique solution $y(\ld) \in l[0, N + 1]$ satisfying
$y_{i_0}(\ld) = z_{i_0}$, $P_{i_0}\Delta y_{i_0}(\ld) = \tilde z_{i_0}$.
\end{lemma}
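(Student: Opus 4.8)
The plan is to recast the three-term recurrence (\ref{Sturm-Liouville equation}) as a first-order linear system in $\C^{2d}$ and to exploit the invertibility of the matrices $P_j$ in (\ref{3}) to propagate the prescribed data both forward and backward from the index $i_0$. Writing $\nabla(P_i\Delta y_i)=P_i\Delta y_i-P_{i-1}\Delta y_{i-1}$, equation (\ref{Sturm-Liouville equation}) is equivalent to
\begin{equation*}
P_i\Delta y_i=P_{i-1}\Delta y_{i-1}+(Q_i-\ld W_i)y_i,\quad 1\le i\le N.
\end{equation*}
This suggests the state vector $u_i=\big(y_i,\,P_i\Delta y_i\big)^T\in\C^{2d}$, whose first block records the solution and whose second block records the quasi-derivative $P_i\Delta y_i=P_i(y_{i+1}-y_i)$. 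The initial data amount precisely to fixing $u_{i_0}=(z_{i_0},\,\tilde z_{i_0})^T$.

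First I would derive the transfer relation. From the second block of $u_i$ and the invertibility of $P_i$ one reads off $y_{i+1}=y_i+P_i^{-1}(P_i\Delta y_i)$, while the displayed form of the equation at index $i+1$ gives $P_{i+1}\Delta y_{i+1}=P_i\Delta y_i+(Q_{i+1}-\ld W_{i+1})y_{i+1}$. Collecting these yields $u_{i+1}=T_i(\ld)u_i$ with
\begin{equation*}
T_i(\ld)=\begin{pmatrix} I_d & P_i^{-1}\\ Q_{i+1}-\ld W_{i+1} & (Q_{i+1}-\ld W_{i+1})P_i^{-1}+I_d\end{pmatrix},\quad 0\le i\le N-1.
\end{equation*}
A single block row reduction, subtracting $(Q_{i+1}-\ld W_{i+1})$ times the first block row from the second, turns $T_i(\ld)$ into $\left(\begin{smallmatrix} I_d & P_i^{-1}\\ 0 & I_d\end{smallmatrix}\right)$, so that $\det T_i(\ld)=1$ and each $T_i(\ld)$ is invertible regardless of $\ld$.

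Then I would use this invertibility to solve the initial value problem. Propagating forward by $u_{j+1}=T_j(\ld)u_j$ for $i_0\le j\le N-1$ and backward by $u_j=T_j(\ld)^{-1}u_{j+1}$ for $0\le j\le i_0-1$ determines $u_0,u_1,\dots,u_N$ uniquely; reading off the first block of each gives $y_0,\dots,y_N$, and the second block of $u_N$ gives $y_{N+1}=y_N+P_N^{-1}(P_N\Delta y_N)$. Reversing the algebra shows the sequence so produced genuinely satisfies (\ref{Sturm-Liouville equation}) with $y_{i_0}=z_{i_0}$ and $P_{i_0}\Delta y_{i_0}=\tilde z_{i_0}$, giving existence; conversely any solution with these data yields a state vector obeying the same invertible recursion, hence coincides with the one constructed, giving uniqueness.

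The computation is entirely routine, and I expect no substantive obstacle: the only points demanding care are bookkeeping. I must check that the indices $j$ used in both sweeps stay in the range $0\le j\le N-1$ where $T_j(\ld)$ is defined (which holds since $1\le i_0\le N$), and that the two boundary vectors $y_0$ and $y_{N+1}$, for which no equation is imposed, are nonetheless pinned down, coming out as the first block of $u_0$ and from the second block of $u_N$ respectively. The single hypothesis that every $P_j$ is invertible is exactly what makes the leading coefficient of the recursion invertible in both directions, so no analytic input beyond linear algebra is required.
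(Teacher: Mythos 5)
Your proof is correct and is essentially the paper's argument: the paper also solves the initial value problem by iterating $P_i\Delta y_i = P_{i-1}\Delta y_{i-1}-(\ld W_i-Q_i)y_i$ forward and backward from $i_0$, using only the invertibility of each $P_j$. Your transfer-matrix packaging $u_{i+1}=T_i(\ld)u_i$ is just a more explicit bookkeeping of that same iteration, so there is no substantive difference.
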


\begin{proof}  This can be deduced by the invertibility of $P_{j}$ for $0\leq j\leq N$ and the iteration of
\begin{equation*}
P_i\Delta y_i= P_{i-1}\Delta y_{i-1}-(\ld W_i -Q_i)y_i,\quad 1\leq i\leq N.
\end{equation*}
\end{proof}

Recall that  $\ld$ is called an eigenvalue of the discrete Sturm-Liouville problem $(\pmb\omega, \mathbf{A})$
 if there exists $y \in l[0, N + 1] $ which is non-trivial and solves (\ref{Sturm-Liouville equation})--(\ref{boundary condition }). Here  $y $ is called an eigenfunction corresponding to $\ld$, and it is said to be normalized  if
$ \sum_{i=1}^Ny^*_iW_iy_i=1.$
By $\sigma(\pmb\omega, \mathbf{A})$ denote the spectral set of $(\pmb\omega, \mathbf{A})$.
For  any $\ld\in \C$, let $\phi^{j}(\ld)=\{\phi_i^{j}(\ld)\}_{i=0}^{N+1}$, $j=1,\cdots,2d$, be the fundamental solutions to (\ref{Sturm-Liouville equation})
determined by the initial data
\begin{equation*}
  \left(
    \begin{array}{ccc}
      \phi^{1}_0(\ld) & \cdots & \phi^{2d}_0(\ld) \\
      P_0\Delta\phi^{1}_0(\ld) & \cdots & P_0\Delta\phi^{2d}_0(\ld) \\
    \end{array}
  \right)=I_{2d}.
\end{equation*}
Denote
\begin{equation*}
  \Phi(\ld):=\left(
    \begin{array}{ccc}
      -\phi^{1}_0(\ld) & \cdots & -\phi^{2d}_0(\ld) \\
      \phi^{1}_N(\ld) & \cdots & \phi^{2d}_N(\ld) \\
    \end{array}
  \right),
  \quad
\Psi(\ld):=  \left(
\begin{array}{ccc}
      P_0\Delta\phi^{1}_0(\ld) & \cdots & P_0\Delta\phi^{2d}_0(\ld) \\
      P_N\Delta\phi^{1}_N(\ld) & \cdots & P_N\Delta\phi^{2d}_N(\ld) \\
    \end{array}
  \right).
\end{equation*}
Then the eigenvalues of $(\pmb\omega, \mathbf{A})$  can be regarded as zeros of the polynomial $\Gamma_{(\pmb\omega, \mathbf{A})}$ as follows.

\begin{lemma} \label{zero} $\ld\in \sigma(\pmb\omega, \mathbf{A})$ if and only of $\ld$ is a zero
of
\begin{equation*}
  \Gamma_{(\pmb\omega, \mathbf{A})}(\ld):=\det(A\Phi(\ld)+B \Psi(\ld)).
\end{equation*}
\end{lemma}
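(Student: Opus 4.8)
The plan is to show that eigenvalues of $(\pmb\omega, \mathbf{A})$ correspond exactly to values of $\ld$ at which the boundary condition \eqref{boundary condition } admits a nontrivial solution, and then to rewrite this solvability condition as the vanishing of the determinant $\Gamma_{(\pmb\omega, \mathbf{A})}(\ld)$. First I would observe that any solution $y(\ld)$ of \eqref{Sturm-Liouville equation} is, by the initial value problem of the preceding Lemma, uniquely determined by the $2d$-dimensional vector of initial data $\bigl(y_0(\ld)^T,\ (P_0\Delta y_0(\ld))^T\bigr)^T$. Consequently, writing this initial vector as $c = (c_1^T, c_2^T)^T \in \C^{2d}$, every solution can be expressed as a linear combination $y(\ld) = \sum_{j=1}^{2d} c_j \phi^{j}(\ld)$ of the fundamental solutions $\phi^{j}(\ld)$, since the initial data of the $\phi^{j}(\ld)$ form the columns of $I_{2d}$ and hence a basis of $\C^{2d}$.

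The key step is then to evaluate the boundary condition on such a $y(\ld)$. Substituting the linear combination into \eqref{boundary condition } and using the definitions of $\Phi(\ld)$ and $\Psi(\ld)$, the left-hand side of \eqref{boundary condition } becomes
\begin{equation*}
 A\left(\begin{array}{c} -y_0(\ld) \\ y_N(\ld) \end{array}\right) + B\left(\begin{array}{c} P_0\Delta y_0(\ld) \\ P_N\Delta y_N(\ld) \end{array}\right) = \bigl(A\Phi(\ld) + B\Psi(\ld)\bigr)c,
\end{equation*}
because the columns of $\Phi(\ld)$ record the top and bottom endpoint values $-\phi^{j}_0(\ld),\ \phi^{j}_N(\ld)$, and the columns of $\Psi(\ld)$ record the corresponding quasi-derivatives $P_0\Delta\phi^{j}_0(\ld),\ P_N\Delta\phi^{j}_N(\ld)$. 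Here I would be careful to match the column index $j$ with the $j$-th unit of initial data, so that $c$ enters as a genuine coefficient vector. This reduces the boundary condition to the homogeneous linear system $\bigl(A\Phi(\ld) + B\Psi(\ld)\bigr)c = 0$ in the unknown $c \in \C^{2d}$.

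To finish, I would translate the existence of a nontrivial eigenfunction into the existence of a nonzero $c$. By the uniqueness in the initial value Lemma, $y(\ld)$ is the trivial solution if and only if its initial data vector $c$ vanishes; hence $\ld$ is an eigenvalue precisely when the system $\bigl(A\Phi(\ld)+B\Psi(\ld)\bigr)c=0$ has a nonzero solution $c$. Since $A\Phi(\ld)+B\Psi(\ld)$ is a $2d\times 2d$ matrix, this happens if and only if $\det\bigl(A\Phi(\ld)+B\Psi(\ld)\bigr)=0$, i.e. $\Gamma_{(\pmb\omega, \mathbf{A})}(\ld)=0$, which is the claim. I expect the only genuinely delicate point to be the careful bookkeeping in the substitution step: verifying that the endpoint data of a linear combination of fundamental solutions assemble correctly into $(A\Phi(\ld)+B\Psi(\ld))c$, with the correct signs on $-y_0(\ld)$ and the correct placement of the quasi-derivatives $P_0\Delta y_0(\ld)$ and $P_N\Delta y_N(\ld)$. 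Everything else is a standard consequence of linearity and the rank-$2d$ structure of the boundary data.
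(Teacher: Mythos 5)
Your proof is correct and is precisely the standard argument that the paper invokes by citing Lemma 3.2 of \cite{Zhu1}: solutions are parametrized linearly by their initial data at $i=0$, so the boundary condition applied to $y(\ld)=\sum_{j}c_j\phi^{j}(\ld)$ becomes the homogeneous system $(A\Phi(\ld)+B\Psi(\ld))c=0$, and the existence of a nontrivial eigenfunction is equivalent to the vanishing of the determinant. The only cosmetic point worth noting is that the paper's initial-value lemma is stated for $1\le i_0\le N$ while the fundamental solutions carry data at $i_0=0$; the same iteration (using invertibility of $P_j$ for $0\le j\le N$) covers that case, as the paper itself implicitly assumes when defining $\phi^{j}(\ld)$.
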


\begin{proof} The proof is similar as that of Lemma 3.2 in \cite{Zhu1}.
\end{proof}

Let $\lambda\in\sigma(\pmb\omega,\mathbf{A})$.
The order of $\lambda$ as a zero of $\Gamma_{(\pmb\omega,\mathbf{A})}$ is called its analytic multiplicity. The number of linearly independent eigenfunctions for $\lambda$ is called its geometric multiplicity.
Let $x_i=P_i\Delta y_i$ for $0\leq i\leq N$. Then the Sturm-Liouville  equation (\ref{Sturm-Liouville  equation}) can be transformed to
a discrete linear Hamiltonian system:
\begin{align*}
J_d\Delta \begin{pmatrix}y_{i}\\x_i\end{pmatrix}=\left(\begin{pmatrix}-Q_{i+1}&0\\0&P_i^{-1}\end{pmatrix}+\lambda \begin{pmatrix}W_{i+1}&0\\0&0\end{pmatrix}\right)R\begin{pmatrix}y_{i}\\x_i\end{pmatrix}, \;\;0\leq i\leq N-1,
\end{align*}
where  $R(y^T_{i},x^T_i)^T=(y^T_{i+1},x^T_i)^T$ is the partial right shift operator.
Then by Theorem 4.1  in \cite{Zhu5}, we get the relationship of analytic and geometric multiplicities of $\lambda$:

\begin{lemma}\label{multiplicity}
 The analytic and geometric multiplicities of  $\lambda\in\sigma(\pmb\omega,\mathbf{A})$ are the same.
\end{lemma}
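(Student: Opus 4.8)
The plan is to reduce the claimed equality of analytic and geometric multiplicities to the Hamiltonian-system framework already set up immediately before the statement, and then invoke the cited Theorem 4.1 of \cite{Zhu5}. Concretely, I would proceed as follows. First, I would make precise the correspondence between solutions of the Sturm-Liouville equation \eqref{Sturm-Liouville equation} and solutions of the discrete linear Hamiltonian system displayed above the lemma, via the substitution $x_i=P_i\Delta y_i$. Since $P_j$ is invertible for every $0\le j\le N$, this substitution is a bijection between the solution space of \eqref{Sturm-Liouville equation} in $l[0,N+1]$ and the solution space of the Hamiltonian system, and it is $\ld$-affine in an explicit way, so it carries eigenfunctions to eigenfunctions and preserves linear independence.

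Next I would identify the two notions of multiplicity in each picture. For the geometric multiplicity this is immediate: the number of linearly independent eigenfunctions of $\ld$ for $(\pmb\omega,\mathbf A)$ equals, under the above bijection, the dimension of the space of nontrivial solutions $(y,x)$ of the Hamiltonian system that additionally satisfy the self-adjoint boundary condition \eqref{boundary condition }. For the analytic multiplicity I would check that the characteristic function $\Gamma_{(\pmb\omega,\mathbf A)}(\ld)=\det(A\Phi(\ld)+B\Psi(\ld))$ built from the fundamental solutions $\phi^{j}(\ld)$ of \eqref{Sturm-Liouville equation} agrees, up to a nonvanishing $\ld$-independent factor coming from the linear change of variables, with the analogous characteristic determinant for the Hamiltonian system in the normalization used by \cite{Zhu5}. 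Because the two determinants differ only by such a nonzero constant factor, they have the same zeros with the same orders, so the analytic multiplicity of $\ld$ is the same object in both formulations.

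With these two identifications in hand, the conclusion is a direct application of Theorem 4.1 of \cite{Zhu5}, which asserts the equality of analytic and geometric multiplicities of an eigenvalue for the discrete linear Hamiltonian system. It then transfers back verbatim to $(\pmb\omega,\mathbf A)$.

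I expect the main obstacle to be verifying the hypotheses of Theorem 4.1 of \cite{Zhu5} in the present setting, rather than the bijection itself. In particular, one must confirm that the coefficient matrices appearing in the Hamiltonian system, together with the partial right shift $R$ defined by $R(y_i^T,x_i^T)^T=(y_{i+1}^T,x_i^T)^T$, satisfy whatever structural and nonsingularity conditions (a discrete Atkinson-type or normality condition on the Hamiltonian pencil) are required for that theorem to apply; this is where the invertibility of $P_j$ and positive definiteness of $W_i$ from \eqref{3} must be used to rule out degeneracies. The delicate point is matching the boundary condition \eqref{boundary condition } to the form of the self-adjoint boundary condition assumed in \cite{Zhu5} so that the characteristic determinants coincide up to a constant; once that bookkeeping is done, the equality of multiplicities follows at once.
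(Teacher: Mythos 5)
Your proposal follows essentially the same route as the paper: the paper likewise rewrites \eqref{Sturm-Liouville equation} as a discrete linear Hamiltonian system via $x_i=P_i\Delta y_i$ and then cites Theorem 4.1 of \cite{Zhu5} to conclude the equality of the two multiplicities. The additional bookkeeping you describe (the solution-space bijection, matching the characteristic determinants up to a nonvanishing factor, and checking the hypotheses of the cited theorem) is exactly the content the paper leaves implicit, so your argument is a correct, slightly more detailed version of the paper's own proof.
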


Therefore, we do not distinguish these two  multiplicities of $\lambda$. Let   $\sharp_1(\sigma(\pmb\omega,\mathbf{A})\cap I)$  be the number of eigenvalues  in  $I\subset \mathbb{R}$, counting multiplicities, of $(\pmb\omega,\mathbf{A})$. Since $\sigma(\pmb\omega,\mathbf{A})\subset\mathbb{R}$ by \cite{Shi3}, we have   $\sharp_1(\sigma(\pmb\omega,\mathbf{A})\cap \mathbb{R})=\sharp_1(\sigma(\pmb\omega,\mathbf{A}))$.
The next lemma determines $\sharp_1(\sigma(\pmb\omega,\mathbf{A}))$.

\begin{lemma}\label{number of eigenvalues}
\begin{align}\label{number}
\sharp_1(\sigma(\pmb\omega,\mathbf{A}))=(N-2)d+{\rm rank} (A_1P_0^{-1}+B_1,B_2),
\end{align}
where $A_1,B_j\in\mathcal{M}_{2d\times d}$ $(j=1,2)$ are given by
\begin{align}\label{A1A2B1B2}
\mathbf{A}=[A\;|\;B]=[(A_1,A_2)|(B_1,B_2)].\end{align}
\end{lemma}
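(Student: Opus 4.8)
The plan is to count the eigenvalues by analyzing the degree and leading behavior of the polynomial $\Gamma_{(\pmb\omega,\mathbf{A})}(\ld)=\det(A\Phi(\ld)+B\Psi(\ld))$, whose zeros are exactly the eigenvalues (Lemma \ref{zero}) with multiplicities matching the geometric ones (Lemma \ref{multiplicity}). First I would understand how the fundamental solutions $\phi^j(\ld)$ depend on $\ld$. Iterating the recursion $P_i\Delta y_i=P_{i-1}\Delta y_{i-1}-(\ld W_i-Q_i)y_i$ from the initial data at $i=0$, one sees that $\phi^j_i(\ld)$ and $P_i\Delta\phi^j_i(\ld)$ are polynomials in $\ld$. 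The key point is to track the degree: since each step from index $i-1$ to $i$ introduces one factor of $(\ld W_i-Q_i)$ acting on the position component while the momentum component $P_i\Delta y_i$ picks up a fresh factor of $\ld W_i$, the entries $\phi^j_N(\ld)$ have degree $N$ in $\ld$ (leading coefficient built from $P_0^{-1}W_1,\dots,P_{N-1}^{-1}W_N$) and the entries $P_N\Delta\phi^j_N(\ld)$ have degree $N$ as well, while the $i=0$ rows $-\phi^j_0(\ld)$ and $P_0\Delta\phi^j_0(\ld)$ are constant (degree $0$) by the normalization of the initial data.

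Next I would organize the determinant according to the block splitting $\mathbf{A}=[(A_1,A_2)\,|\,(B_1,B_2)]$ in (\ref{A1A2B1B2}), so that
\begin{align*}
A\Phi(\ld)+B\Psi(\ld)=A_1(-\phi_0(\ld))+A_2\phi_N(\ld)+B_1(P_0\Delta\phi_0(\ld))+B_2(P_N\Delta\phi_N(\ld)),
\end{align*}
viewed as a $2d\times 2d$ matrix of polynomials in $\ld$. Using $(-\phi_0(\ld)\;;\;P_0\Delta\phi_0(\ld))=I_{2d}$ at $i=0$, the two constant-degree contributions combine to $A_1\cdot(\text{const})+B_1\cdot(\text{const})$, and the genuinely $\ld$-dependent columns come through $\phi_N(\ld)$ and $P_N\Delta\phi_N(\ld)$. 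The strategy is to compute the degree of $\Gamma_{(\pmb\omega,\mathbf{A})}$ as a sum of column degrees: the highest-order terms factor through the leading matrix of the transfer map $\phi^j\mapsto(\phi_N,P_N\Delta\phi_N)$, whose top-degree part has rank governed by the product of the invertible $P_j^{-1}$ and positive-definite $W_i$ (here (\ref{3}) is essential: invertibility of $P_j$ and positive-definiteness of $W_i$ guarantee the leading coefficients do not degenerate). A careful bookkeeping should show that the generic degree contribution is $(N-2)d$ plus a correction depending on how the boundary blocks $A_1,B_1,B_2$ interact with the surviving top-degree and next-to-top-degree terms; normalizing via the local coordinate representative $[(S\,|\,I_{2d})E_K]$ from (\ref{co-sys}) would let me reduce $A,B$ to a convenient canonical form and isolate the rank term ${\rm rank}(A_1P_0^{-1}+B_1,B_2)$.

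The cleanest route to the exact constant is to identify $\Gamma_{(\pmb\omega,\mathbf{A})}$ with the characteristic polynomial (up to a nonzero factor) of a finite matrix pencil: rewriting the problem in the Hamiltonian form displayed before Lemma \ref{multiplicity} turns the eigenvalue problem into a generalized eigenvalue problem $(\mathcal{M}-\ld\mathcal{N})v=0$ on a finite-dimensional space, where the boundary condition (\ref{boundary condition }) imposes $2d$ linear constraints. The number of eigenvalues counting multiplicity is then the degree of $\det(\mathcal{M}-\ld\mathcal{N})$ restricted to the constrained space, which equals the rank of the coefficient of the top power of $\ld$. Because $\mathcal{N}$ carries the $W_i$ blocks (positive definite) on the interior and zeros elsewhere, its interior rank is exactly $(N-2)d$ after accounting for the two boundary layers; the residual rank deficiency at the endpoints is precisely measured by how $A_1P_0^{-1}+B_1$ and $B_2$ engage the endpoint data, giving the additive term ${\rm rank}(A_1P_0^{-1}+B_1,B_2)$.

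The main obstacle I anticipate is the endpoint bookkeeping: the degree count in the interior is routine once the transfer recursion is set up, but pinning down exactly why the boundary contribution is ${\rm rank}(A_1P_0^{-1}+B_1,B_2)$ rather than some other rank expression requires carefully handling the interaction of the constant $i=0$ data with the $A_1,B_1$ blocks (explaining the appearance of the combination $A_1P_0^{-1}+B_1$ through the relation $P_0\Delta\phi_0=P_0(\phi_1-\phi_0)$ and the first step of the recursion) together with the highest-degree $i=N$ data paired with $B_2$. I would expect to need a rank-preserving column reduction of the polynomial matrix $A\Phi+B\Psi$, separating the $\ld$-saturated columns from the deficient ones, and then to invoke the non-degeneracy in (\ref{3}) to certify that no further cancellation lowers the degree. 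Verifying that this reduction is valid uniformly—independent of the particular coordinate chart $\mathcal{O}_K^{\C}$—is the delicate step.
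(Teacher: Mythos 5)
You have a genuine gap: your proposal is a strategy outline whose decisive step is precisely the content of the lemma, and that step is never carried out. The paper in fact does not prove the counting formula from scratch at all: it cites Theorem 4.1 of \cite{Shi3}, which already gives $\sharp_1(\sigma(\pmb\omega,\mathbf{A}))=(N-2)d+\mathrm{rank}\,(A_1+B_1P_0,B_2)$, and then obtains the stated form from the rank-preserving identity
\begin{equation*}
(A_1+B_1P_0,B_2)\begin{pmatrix}P_0^{-1}&0\\0&I_d\end{pmatrix}=(A_1P_0^{-1}+B_1,B_2).
\end{equation*}
Your route---computing the degree of $\Gamma_{(\pmb\omega,\mathbf{A})}(\lambda)=\det(A\Phi(\lambda)+B\Psi(\lambda))$ via Lemmas \ref{zero} and \ref{multiplicity}---is legitimate in principle (it amounts to reproving the cited theorem), but at every point where the real work occurs you write ``a careful bookkeeping should show'' or ``I would expect to need,'' and the identification of the boundary correction as exactly $\mathrm{rank}\,(A_1P_0^{-1}+B_1,B_2)$, which \emph{is} the lemma, is flagged as an anticipated obstacle rather than resolved.

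Two of the concrete assertions you do make are also wrong, and both sit on the critical path. First, the degree bookkeeping: iterating $P_i\Delta y_i=P_{i-1}\Delta y_{i-1}-(\lambda W_i-Q_i)y_i$ gives $\deg y_i=i-1$ and $\deg(P_i\Delta y_i)=i$, so the entries of $\phi^j_N$ have degree $N-1$, not $N$; more seriously, the degree of the determinant cannot be read off as a sum of column degrees, because the leading-coefficient matrices have rank only $d$ and are strongly correlated across columns: the top coefficient of the last $d$ columns of $\phi_N$ (and of $P_N\Delta\phi_N$) equals that of the first $d$ columns multiplied by $P_0^{-1}$. The unimodular operation exploiting this, right multiplication by $\begin{pmatrix}I_d&-P_0^{-1}\\0&I_d\end{pmatrix}$, simultaneously kills the top-degree terms in the last $d$ columns and turns the constant block $(-A_1,\,B_1)$ into $(-A_1,\,A_1P_0^{-1}+B_1)$; this is where the combination in the lemma actually comes from, and your sketch gestures at it but never performs it or pushes the induction through. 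Second, your pencil shortcut is false: for a generalized eigenvalue problem the degree of $\det(\mathcal{M}-\lambda\mathcal{N})$ is bounded by $\mathrm{rank}\,\mathcal{N}$ but can be strictly smaller, even for Hermitian pencils; e.g.\ $\mathcal{M}=\begin{pmatrix}0&1\\1&0\end{pmatrix}$, $\mathcal{N}=\mathrm{diag}(1,0)$ yields $\det(\mathcal{M}-\lambda\mathcal{N})\equiv-1$. That possible drop of the degree below the naive rank bound is exactly the boundary cancellation phenomenon that the term $\mathrm{rank}\,(A_1P_0^{-1}+B_1,B_2)$ quantifies, so a correct proof cannot assume it away; either carry out the full column-reduction argument, or do what the paper does and invoke \cite{Shi3}.
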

\begin{proof}
By Theorem 4.1 in \cite{Shi3},
$$\sharp_1(\sigma(\pmb\omega,\mathbf{A}))=(N-2)d+{\rm rank} (A_1+B_1P_0,B_2).$$
Then (\ref{number}) is obtained by $(A_1+B_1P_0,B_2)\begin{pmatrix}P_0^{-1}&0\\0&I_d\end{pmatrix}=(A_1P_0^{-1}+B_1,B_2)$.
\end{proof}
Note that $(N-2)d\leq \sharp_1(\sigma(\pmb\omega,\mathbf{A}))\leq Nd$.

\subsection{Continuous Eigenvalue Branch}
In this subsection, we  construct  continuous eigenvalue branches. Then we  study their derivative formulae and  monotonicity in some directions.

The first lemma is the small perturbation theory of eigenvalues.

\begin{lemma}\label{small perturbation} Let $(\pmb\omega_0, \mathbf{A}_0)\in\mathcal{O}\subset  \Omega_N^{\C} \times\mathcal{ B}^{\C}$, and $c_1, c_2\in\mathbb{R}\setminus\sigma(\pmb\omega_0, \mathbf{A}_0)$ with $c_1 < c_2.$
Then there exists a neighborhood $\mathcal{U}\subset\mathcal{O}$ of $(\pmb\omega_0, \mathbf{A}_0)$  such that for each  $(\pmb\omega, \mathbf{A})\in\mathcal{U}$,  $\sharp_1(\sigma(\pmb\omega, \mathbf{A})\cap(c_1, c_2))=\sharp_1(\sigma(\pmb\omega_0, \mathbf{A}_0)\cap(c_1, c_2))$ and  $c_1, c_2\notin\sigma(\pmb\omega, \mathbf{A})$.
\end{lemma}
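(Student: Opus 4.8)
The plan is to reduce everything to the polynomial $\Gamma_{(\pmb\omega,\mathbf{A})}$ of Lemma \ref{zero} and to count its zeros by a contour integral. By Lemma \ref{zero}, $\ld\in\sigma(\pmb\omega,\mathbf{A})$ if and only if $\Gamma_{(\pmb\omega,\mathbf{A})}(\ld)=0$, and by Lemma \ref{multiplicity} the order of such a zero equals the multiplicity of the eigenvalue. Moreover $\sigma(\pmb\omega,\mathbf{A})\subset\R$ for every self-adjoint problem (by \cite{Shi3}), so \emph{every} zero of $\Gamma_{(\pmb\omega,\mathbf{A})}$ is real. Hence $\sharp_1(\sigma(\pmb\omega,\mathbf{A})\cap(c_1,c_2))$ equals the number of zeros of $\Gamma_{(\pmb\omega,\mathbf{A})}$, counted with order, lying in the open interval $(c_1,c_2)$, equivalently inside any complex rectangle straddling that interval.

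First I would record the necessary continuity. The fundamental solutions $\phi^{j}(\ld)$ are produced by the recurrence $P_i\Delta y_i=P_{i-1}\Delta y_{i-1}-(\ld W_i-Q_i)y_i$ of the initial-value lemma, so the entries of $\Phi(\ld),\Psi(\ld)$ are polynomials in $\ld$ whose coefficients depend polynomially, hence continuously, on $\pmb\omega\in\Omega_N^{\C}$. Choosing the local coordinate representative $\mathbf{A}=[(S\,|\,I_{2d})E_K]$ on some $\mathcal{O}_K^{\C}$ containing $\mathbf{A}_0$ fixes a continuous section $\mathbf{A}\mapsto(A,B)$; since replacing $(A,B)$ by $(TA,TB)$ only multiplies $\Gamma$ by $\det T\neq0$, the zeros and their orders are well defined on the quotient $\mathcal{B}^{\C}$. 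Consequently $(\pmb\omega,\mathbf{A},\ld)\mapsto\Gamma_{(\pmb\omega,\mathbf{A})}(\ld)$ is jointly continuous on $\mathcal{U}_0\times\C$ for a suitable coordinate neighborhood $\mathcal{U}_0\subset\mathcal{O}$ of $(\pmb\omega_0,\mathbf{A}_0)$.

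Next I would fix the contour. For any $\delta>0$ let $\mathcal{R}$ be the rectangle with corners $c_1\pm i\delta,\,c_2\pm i\delta$ and boundary $\partial\mathcal{R}$. Because the zeros of $\Gamma_{(\pmb\omega_0,\mathbf{A}_0)}$ are all real and $c_1,c_2$ are not among them, $\partial\mathcal{R}$ contains no zero of $\Gamma_{(\pmb\omega_0,\mathbf{A}_0)}$, and the zeros lying inside $\mathcal{R}$ are precisely the eigenvalues in $(c_1,c_2)$. By compactness $|\Gamma_{(\pmb\omega_0,\mathbf{A}_0)}|\ge m>0$ on $\partial\mathcal{R}$, so the joint continuity above furnishes a neighborhood $\mathcal{U}\subset\mathcal{U}_0$ on which $\Gamma_{(\pmb\omega,\mathbf{A})}$ stays uniformly bounded away from $0$ on $\partial\mathcal{R}$. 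Since $c_1,c_2\in\partial\mathcal{R}$, this already gives $\Gamma_{(\pmb\omega,\mathbf{A})}(c_1),\Gamma_{(\pmb\omega,\mathbf{A})}(c_2)\neq0$, i.e. $c_1,c_2\notin\sigma(\pmb\omega,\mathbf{A})$ for every $(\pmb\omega,\mathbf{A})\in\mathcal{U}$, which is the second assertion.

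Finally I would invoke the argument principle. On $\mathcal{U}$ the integral
\[
\mathcal{N}(\pmb\omega,\mathbf{A})=\frac{1}{2\pi i}\oint_{\partial\mathcal{R}}\frac{\Gamma'_{(\pmb\omega,\mathbf{A})}(\ld)}{\Gamma_{(\pmb\omega,\mathbf{A})}(\ld)}\,d\ld
\]
is well defined (nonvanishing denominator) and counts, with order, the zeros of $\Gamma_{(\pmb\omega,\mathbf{A})}$ inside $\mathcal{R}$. As the integrand is jointly continuous and the denominator is bounded below on $\partial\mathcal{R}$ throughout $\mathcal{U}$, $\mathcal{N}$ is continuous on $\mathcal{U}$; being integer-valued it is locally constant, hence equal to $\mathcal{N}(\pmb\omega_0,\mathbf{A}_0)=\sharp_1(\sigma(\pmb\omega_0,\mathbf{A}_0)\cap(c_1,c_2))$. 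Because all zeros of $\Gamma_{(\pmb\omega,\mathbf{A})}$ are real, those inside $\mathcal{R}$ are exactly the eigenvalues in $(c_1,c_2)$, so $\mathcal{N}(\pmb\omega,\mathbf{A})=\sharp_1(\sigma(\pmb\omega,\mathbf{A})\cap(c_1,c_2))$, completing the first assertion. The main obstacle I anticipate is not the argument principle itself but the bookkeeping around the quotient space $\mathcal{B}^{\C}$: one must pin down a genuinely continuous local representative of $\mathbf{A}$ so that $\Gamma$ varies continuously, and then exploit the reality of the spectrum so that the contour count coincides with the count of \emph{real} eigenvalues in $(c_1,c_2)$ rather than merely the count of complex zeros enclosed by $\mathcal{R}$. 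Note that the global count of Lemma \ref{number of eigenvalues} is deliberately not used, since the total number of eigenvalues may jump while the number inside the fixed bounded interval remains stable.
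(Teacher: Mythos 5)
Your proposal is correct and is essentially the paper's own proof: the paper disposes of this lemma in one line, saying that ``using Lemma \ref{zero}, the proof is by a standard perturbation procedure for zeros of the analytic function $\Gamma_{(\pmb\omega_0,\mathbf{A}_0)}$,'' and your argument-principle contour count around a rectangle straddling $(c_1,c_2)$ is exactly that standard procedure, spelled out with the needed ingredients (joint continuity of $\Gamma$ via a local chart of $\mathcal{B}^{\C}$, reality of the spectrum, and Lemma \ref{multiplicity} to match orders of zeros with multiplicities). The only cosmetic point is to take the ambient coordinate neighborhood connected (e.g.\ convex in the chart) so that the integer-valued function $\mathcal{N}$ is constant, and then intersect with $\mathcal{O}$ at the end.
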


\begin{proof}
Using Lemma \ref{zero}, the proof is by a standard perturbation procedure  for zeros of the analytic function $\Gamma_{(\pmb\omega_0, \mathbf{A}_0)}$.
\end{proof}
By Lemma \ref{small perturbation} and a similar approach to Theorem 3.5 in \cite{Zhu1}, we then construct the continuous eigenvalue branches.

\begin{lemma}\label{continuous eigenvalue branch}
 Let  $(\pmb\omega_0, \mathbf{A}_0)\in \Omega_N^{\C} \times\mathcal{ B}^{\C}$ and $\ld_*\in \sigma(\pmb\omega_0, \mathbf{A}_0)$ with multiplicity $m$. Fix a small $\varepsilon>0$ such that $\sigma(\pmb\omega_0, \mathbf{A}_0)\cap[\ld_*-\varepsilon,\ld_*+\varepsilon]=\{\ld_*\}$.
 Then there is a connected neighborhood $\mathcal{U}$ of $(\pmb\omega_0, \mathbf{A}_0)$ and continuous functions  $\Lambda_i:\mathcal{U}\to \R$, $1\le i\le m$, such that   $\ld_*-\varepsilon<\Ld_1(\pmb\omega,\mathbf{A})\leq \cdots\leq \Ld_m(\pmb\omega,\mathbf{A}) <\ld_*+\varepsilon$ and $\ld_*\pm\varepsilon\notin\sigma(\pmb\omega, \mathbf{A})$ for all
$(\pmb\omega, \mathbf{A})\in \mathcal{U}$, where $\{\Lambda_i(\pmb\omega_, \mathbf{A})\}_{i=1}^m\subset\sigma(\pmb\omega, \mathbf{A})$.
\end{lemma}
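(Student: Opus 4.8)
The plan is to realize the eigenvalues as zeros of the analytic function $\Gamma_{(\pmb\omega,\mathbf A)}$ furnished by Lemma \ref{zero}, and to deduce continuity of the branches from the continuous dependence of the zeros of an analytic function on parameters, with the total count pinned down by Lemma \ref{small perturbation}. First I would check that $\Gamma_{(\pmb\omega,\mathbf A)}(\ld)$ is jointly continuous in $((\pmb\omega,\mathbf A),\ld)$ and entire in $\ld$ for each fixed $(\pmb\omega,\mathbf A)$. This follows by iterating the recurrence $P_i\Delta y_i=P_{i-1}\Delta y_{i-1}-(\ld W_i-Q_i)y_i$: each fundamental solution $\phi_i^j(\ld)$ is built step by step, every entry is a polynomial in $\ld$ whose coefficients depend polynomially on the entries of $P_j^{-1},Q_i,W_i$, and $A,B$ then enter $\det(A\Phi(\ld)+B\Psi(\ld))$ linearly. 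Hence $\Phi(\ld),\Psi(\ld)$ and $\Gamma_{(\pmb\omega,\mathbf A)}(\ld)$ are jointly continuous and $\Gamma_{(\pmb\omega,\mathbf A)}(\cdot)$ is entire.

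Next I would fix the count. Applying Lemma \ref{small perturbation} with $c_1=\ld_*-\varepsilon$ and $c_2=\ld_*+\varepsilon$ yields a neighborhood on which $\ld_*\pm\varepsilon\notin\sigma(\pmb\omega,\mathbf A)$ and $\sharp_1(\sigma(\pmb\omega,\mathbf A)\cap(\ld_*-\varepsilon,\ld_*+\varepsilon))=m$, since for the base point the only eigenvalue in this interval is $\ld_*$, whose analytic multiplicity equals $m$ by Lemma \ref{multiplicity}. Shrinking to a connected subset gives $\mathcal U$. Because the spectrum is real, $\Gamma_{(\pmb\omega,\mathbf A)}$ has exactly $m$ real zeros (with multiplicity) in the open interval for every parameter in $\mathcal U$; listing them in non-decreasing order defines the candidate functions $\Lambda_1\le\cdots\le\Lambda_m$ valued in $(\ld_*-\varepsilon,\ld_*+\varepsilon)$.

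The heart of the argument is continuity of these ordered branches, which I would obtain from the argument principle. Fix a thin rectangle $D=[\ld_*-\varepsilon,\ld_*+\varepsilon]\times[-h,h]\subset\C$ with $h>0$ small. Its boundary $\partial D$ meets the real axis only at $\ld_*\pm\varepsilon$, which lie outside $\sigma(\pmb\omega,\mathbf A)$ for every parameter in $\mathcal U$; since the spectrum is real, $\Gamma_{(\pmb\omega,\mathbf A)}$ has no off-axis zeros, so $\partial D$ avoids all zeros of $\Gamma_{(\pmb\omega,\mathbf A)}$ uniformly over $\mathcal U$, while the only zeros enclosed by $D$ are exactly the $m$ eigenvalues in $(\ld_*-\varepsilon,\ld_*+\varepsilon)$. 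Consequently the power sums
\[
\frac{1}{2\pi i}\oint_{\partial D}\ld^k\,\frac{\Gamma'_{(\pmb\omega,\mathbf A)}(\ld)}{\Gamma_{(\pmb\omega,\mathbf A)}(\ld)}\,d\ld=\sum_{j=1}^m\Lambda_j(\pmb\omega,\mathbf A)^k,\qquad k\ge 1,
\]
are continuous in $(\pmb\omega,\mathbf A)$, the integrand being jointly continuous and nonvanishing on $\partial D$. Hence the coefficients of the monic polynomial $\prod_{j=1}^m(\ld-\Lambda_j(\pmb\omega,\mathbf A))$ vary continuously, and since the roots of a monic polynomial depend continuously on its coefficients as an unordered multiset and are here all real, their non-decreasing rearrangement is continuous.

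The main obstacle I anticipate is precisely this last passage, from the continuously varying unordered multiset of zeros to continuity of the individually labelled, ordered branches across parameter values where two or more eigenvalues collide, for there the multiplicities of the $\Lambda_i$ change and a naive labelling can jump. The remedy is the elementary but essential fact that the non-decreasing rearrangement of a real multiset is a continuous (indeed $1$-Lipschitz) map of the multiset, so no genuine discontinuity survives once the total count is pinned to $m$ and all zeros are confined to the real interval. This is exactly where the reality of the spectrum and the endpoint exclusion $\ld_*\pm\varepsilon\notin\sigma(\pmb\omega,\mathbf A)$ are used in a decisive way, and it is the discrete analogue of the one-dimensional construction underlying Theorem 3.5 of \cite{Zhu1}.
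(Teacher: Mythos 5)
Your proposal is correct and takes essentially the same route as the paper: the paper's proof is a one-line appeal to Lemma \ref{small perturbation} together with the construction of Theorem 3.5 in \cite{Zhu1}, and your argument---joint continuity of the polynomial $\Gamma_{(\pmb\omega,\mathbf{A})}$ in $((\pmb\omega,\mathbf{A}),\ld)$, the local eigenvalue count pinned down by Lemma \ref{small perturbation}, and continuity of the non-decreasingly ordered real zeros via the argument principle---is precisely the content of that cited construction, written out in full.
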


Here $\Lambda_i:\mathcal{U}\to \R$, $1\le i\le m$, are called the continuous eigenvalue branches through $\ld_*$.
We write $\Lambda_i(\pmb\omega)$ when $\mathbf{A}$ is fixed, and write $\Lambda_i(\mathbf{A})$ when $\pmb\omega$ is fixed.
Then we shall make a continuous choice of eigenfunctions for the eigenvalues along a  continuous simple eigenvalue branch $(m=1)$.

\begin{lemma}\label{continuous choice of eigenfunctions}
  Let $u_0$ be an eigenfunction for a simple eigenvalue $\lambda_*\in\sigma(\pmb\omega_0,\mathbf{A}_0)$,
 and
$\Lambda$ be the continuous eigenvalue branch defined on $\mathcal{U}$ through $\lambda_*$.  Then
 there exists a neighborhood $\mathcal{U}_1\subset\mathcal{U}$ of $(\pmb\omega_0,\mathbf{A}_0)$ such that   for any $(\pmb\omega,\mathbf{A})\in\mathcal{U}_1$,
  there is an eigenfunction $u_{\Lambda(\pmb\omega,\mathbf{A})}$ for $\Lambda(\pmb\omega,\mathbf{A})$  satisfying that
 $u_{\Lambda(\pmb\omega,\mathbf{A})}=u_0$, and
$u_{\Lambda(\pmb\omega,\mathbf{A})}\to u_{\Lambda(\pmb\omega_0,\mathbf{A}_0)}$  in $\mathbb{C}^{(N+2)d}$
as $\mathcal{{U}}_1\ni(\pmb\omega,\mathbf{A})\to(\pmb\omega_0,\mathbf{A}_0)$.
\end{lemma}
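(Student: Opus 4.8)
The plan is to reduce the statement to a continuous selection of kernel vectors of the matrix $M(\pmb\omega,\mathbf{A},\ld):=A\Phi(\ld)+B\Psi(\ld)$ from Lemma \ref{zero}, evaluated along the branch $\ld=\Lambda(\pmb\omega,\mathbf{A})$. First I would record the correspondence between kernel vectors and eigenfunctions: if $c=(c_1,\dots,c_{2d})^T\in\ker M(\pmb\omega,\mathbf{A},\ld)$ and $\ld$ is an eigenvalue, then $y:=\sum_{j=1}^{2d}c_j\phi^j(\ld)$ solves \eqref{Sturm-Liouville equation} (being a linear combination of fundamental solutions) and satisfies \eqref{boundary condition } (precisely because $Mc=0$), hence is an eigenfunction, and conversely every eigenfunction arises this way. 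Thus it suffices to produce a continuous, nowhere-vanishing $c(\pmb\omega,\mathbf{A})\in\ker M(\pmb\omega,\mathbf{A},\Lambda(\pmb\omega,\mathbf{A}))$ on a neighborhood of $(\pmb\omega_0,\mathbf{A}_0)$, since the fundamental solutions $\phi^j$ depend continuously (indeed polynomially) on $\pmb\omega$ and $\ld$ through the unique solvability and iteration of \eqref{Sturm-Liouville equation}, and $\Lambda$ is continuous by Lemma \ref{continuous eigenvalue branch}.

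The key step uses the adjugate matrix. Since $\ld_*$ is simple, its geometric multiplicity equals $1$ by Lemma \ref{multiplicity}, so $\ker M(\pmb\omega_0,\mathbf{A}_0,\ld_*)$ is one-dimensional and $M_0:=M(\pmb\omega_0,\mathbf{A}_0,\ld_*)$ has rank $2d-1$. Consequently its adjugate $\mathrm{adj}(M_0)$ has rank exactly one, so at least one column, say the $k$-th, is nonzero. Moreover, for any eigenvalue $\ld$ one has $M\,\mathrm{adj}(M)=\det(M)\,I_{2d}=\Gamma_{(\pmb\omega,\mathbf{A})}(\ld)\,I_{2d}=0$, so every column of $\mathrm{adj}(M)$ lies in $\ker M$ whenever $\ld\in\sigma(\pmb\omega,\mathbf{A})$. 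I would therefore take $c(\pmb\omega,\mathbf{A})$ to be the $k$-th column of $\mathrm{adj}\bigl(M(\pmb\omega,\mathbf{A},\Lambda(\pmb\omega,\mathbf{A}))\bigr)$, which is a genuine kernel vector because $\Lambda(\pmb\omega,\mathbf{A})\in\sigma(\pmb\omega,\mathbf{A})$ along the branch.

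To establish continuity and non-vanishing, I would fix a local coordinate chart $\mathcal{O}_K^{\C}\ni\mathbf{A}_0$ and use the representative $(A\,|\,B)=(S\,|\,I_{2d})E_K$ with $S=S(\mathbf{A})$ depending continuously on $\mathbf{A}$, so that $M$ becomes continuous jointly in $(\pmb\omega,\mathbf{A},\ld)$. Since the adjugate is a polynomial in the entries of $M$ and $\Lambda$ is continuous, the vector $c(\pmb\omega,\mathbf{A})$ is continuous; it is nonzero at $(\pmb\omega_0,\mathbf{A}_0)$ by the choice of $k$, hence nonzero on some neighborhood $\mathcal{U}_1\subset\mathcal{U}$. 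Setting $u_{\Lambda(\pmb\omega,\mathbf{A})}:=\sum_{j=1}^{2d}c_j(\pmb\omega,\mathbf{A})\,\phi^j(\Lambda(\pmb\omega,\mathbf{A}))$ then gives a continuous family of eigenfunctions on $\mathcal{U}_1$, so that $u_{\Lambda(\pmb\omega,\mathbf{A})}\to u_{\Lambda(\pmb\omega_0,\mathbf{A}_0)}$ in $\C^{(N+2)d}$. Finally, since $u_{\Lambda(\pmb\omega_0,\mathbf{A}_0)}$ and $u_0$ are both eigenfunctions of the simple eigenvalue $\ld_*$, they are proportional, $u_0=\gamma\,u_{\Lambda(\pmb\omega_0,\mathbf{A}_0)}$ for some $\gamma\in\C\setminus\{0\}$; replacing $c$ by $\gamma c$ normalizes the family so that it equals $u_0$ at the base point while preserving continuity.

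I expect the only genuinely delicate point to be the continuous selection of the kernel vector, which the rank-one structure of $\mathrm{adj}(M_0)$ resolves cleanly; the remaining continuity assertions are routine once $M$ is written through a fixed local chart and the polynomial iteration for the $\phi^j$ is invoked.
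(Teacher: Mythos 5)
Your proof is correct. Note that the paper itself gives no argument for this lemma: it only states that the proof is ``similar to that of Lemma 4.3 in \cite{Zhu1}'' and omits the details, so there is nothing in the text to compare against line by line. Your route is the natural one and is, in effect, the $d$-dimensional generalization of the classical one-dimensional selection: eigenfunctions correspond to nonzero kernel vectors of $M(\pmb\omega,\mathbf{A},\ld)=A\Phi(\ld)+B\Psi(\ld)$ by Lemma \ref{zero}; simplicity together with Lemma \ref{multiplicity} forces $\mathrm{rank}\,M_0=2d-1$, so $\mathrm{adj}(M_0)$ has rank one; and the identity $M\,\mathrm{adj}(M)=\Gamma_{(\pmb\omega,\mathbf{A})}(\ld)I_{2d}=0$ along the branch makes a fixed column of the adjugate a continuous (indeed polynomial-in-the-data) nonvanishing kernel selection near the base point. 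In dimension one this column is just the cofactor vector, which is exactly the device used in the cited literature, so your argument fills in the omitted proof rather than diverging from it. Two points you handled that deserve to stay explicit: (i) working in a fixed chart $\mathcal{O}_K^{\C}$ with the representative $(S(\mathbf{A})\,|\,I_{2d})E_K$ is genuinely needed, since $M$ is only defined up to left multiplication by an element of $\mathrm{GL}(2d,\C)$ and the adjugate is not invariant under that action (the kernel is, but a continuous choice requires a continuous representative); (ii) the equality $u_{\Lambda(\pmb\omega,\mathbf{A})}=u_0$ in the statement should be read as $u_{\Lambda(\pmb\omega_0,\mathbf{A}_0)}=u_0$ (a typo in the paper), and your final rescaling by the proportionality constant $\gamma$ is precisely what realizes it while preserving continuity of the family.
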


\begin{proof}
The proof is similar to that of Lemma 4.3 in \cite{Zhu1}, and thus we omit the details.
\end{proof}

Besides Lemma \ref{continuous choice of eigenfunctions}, we also need the following lemma to deduce the derivative formulae for continuous simple eigenvalue branches.

\begin{lemma}\label{sbc} Let $y$ be an eigenfunction for
 $\lambda\in\sigma(\pmb\omega,\mathbf{A})$ and $z$ be an eigenfunction for  $\tilde\lambda\in\sigma({\pmb{\tilde\omega}},\mathbf{A})$, where $\pmb\omega=(P^{-1},Q,W),{\pmb{\tilde\omega}} =(\tilde{P}^{-1},\tilde{Q},\tilde{W})$ and $\mathbf{A}=[A\,|\,B]$. Then
\begin{equation}\label{same boundary}
  (\Delta z_0)^*\tilde{P}_0y_0-z_0^*P_0\Delta y_0=(\Delta z_N)^*\tilde P_N y_N-z_N^*{P}_N\Delta y_N.
\end{equation}
\end{lemma}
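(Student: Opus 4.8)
The plan is to recognize that identity \eqref{same boundary} is \emph{not} a consequence of the Sturm-Liouville recurrences at all, but follows purely from the fact that $y$ and $z$ satisfy one and the same self-adjoint boundary condition $\mathbf{A}=[A\,|\,B]$; the eigenvalues $\ld,\tilde\ld$ and the coefficients $Q,\tilde Q,W,\tilde W$ play no role. Accordingly, I would first package the boundary data into two vectors in $\C^{4d}$,
\begin{align*}
\xi=\begin{pmatrix}-y_0\\ y_N\\ P_0\Delta y_0\\ P_N\Delta y_N\end{pmatrix},\qquad
\eta=\begin{pmatrix}-z_0\\ z_N\\ \tilde P_0\Delta z_0\\ \tilde P_N\Delta z_N\end{pmatrix},
\end{align*}
and compute, using that $P_N,\tilde P_0,\tilde P_N$ are Hermitian, that
\begin{align*}
\eta^*J_{2d}\,\xi=\big[(\Delta z_N)^*\tilde P_Ny_N-z_N^*P_N\Delta y_N\big]-\big[(\Delta z_0)^*\tilde P_0y_0-z_0^*P_0\Delta y_0\big].
\end{align*}
Thus \eqref{same boundary} is equivalent to the single scalar statement $\eta^*J_{2d}\,\xi=0$.

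Next I would observe that \eqref{boundary condition } for $y$, together with the analogous boundary condition for $z$, says precisely $(A,B)\xi=0$ and $(A,B)\eta=0$, i.e.\ $\xi,\eta\in\ker(A,B)$. Hence it suffices to prove that $\ker(A,B)$ is isotropic (indeed Lagrangian) for the symplectic form $J_{2d}$, namely that $v^*J_{2d}u=0$ for all $u,v\in\ker(A,B)$.

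The core of the argument is to identify this kernel explicitly. Set $M=\begin{pmatrix}B^*\\ -A^*\end{pmatrix}\in\mathcal{M}_{4d,2d}(\C)$. Then $(A,B)M=AB^*-BA^*=0$ by \eqref{4}, so $\operatorname{range}(M)\subseteq\ker(A,B)$; moreover $\operatorname{rank}(M)=\operatorname{rank}(M^*)=\operatorname{rank}(B,-A)=\operatorname{rank}(A,B)=2d$, while $\dim\ker(A,B)=4d-2d=2d$, whence $\ker(A,B)=\operatorname{range}(M)$. Writing $\xi=Mc$ and $\eta=Md$ for suitable $c,d\in\C^{2d}$, a direct computation gives
\begin{align*}
M^*J_{2d}M=(B,-A)\,J_{2d}\begin{pmatrix}B^*\\ -A^*\end{pmatrix}=(B,-A)\begin{pmatrix}A^*\\ B^*\end{pmatrix}=BA^*-AB^*=0,
\end{align*}
again by \eqref{4}; therefore $\eta^*J_{2d}\,\xi=d^*M^*J_{2d}Mc=0$, which is exactly the desired conclusion.

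The only genuinely nontrivial point is the dimension count yielding $\ker(A,B)=\operatorname{range}(M)$: the inclusion $\operatorname{range}(M)\subseteq\ker(A,B)$ is immediate from $AB^*=BA^*$, but equality needs the full-rank hypothesis $\operatorname{rank}(A,B)=2d$ in \eqref{4}. Everything else is Hermitian bookkeeping, and I would expect no serious obstacle beyond keeping careful track of the conjugate transposes of $P_N,\tilde P_0,\tilde P_N$ and the accompanying signs when matching $\eta^*J_{2d}\xi$ to the two boundary expressions in \eqref{same boundary}.
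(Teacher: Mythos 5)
Your proposal is correct and is essentially the paper's own proof: the paper forms the same boundary vectors $Y,Z$, uses $AB^*=BA^*$ and $\operatorname{rank}(A,B)=2d$ to identify $\ker(A,B)$ as the span of the columns of $J_{2d}(A,B)^*$ (which is exactly $-M$ in your notation, so your $\operatorname{range}(M)$ is the same Lagrangian subspace), and concludes \eqref{same boundary} from $Z^*J_{2d}Y=0$. The only difference is cosmetic bookkeeping --- rows $\alpha_i$ and linear combinations in the paper versus the matrix $M$ in your write-up.
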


\begin{proof} For convenience, denote
\begin{equation}\label{YZ-def}
 \quad (A,\,B)=\left(
                              \begin{array}{c}
                                \af_1 \\
                                \af_2 \\
                                \vdots \\
                                \af_{2d} \\
                              \end{array}
                            \right),\quad Y=\left(
                                              \begin{array}{c}
                                                -y_0 \\
                                                y_N\\
                                                 P_0\Delta y_0\\
                                                P_N\Delta y_N \\
                                              \end{array}
                                            \right),
                                            \quad Z=\left(
                                              \begin{array}{c}
                                                -z_0 \\
                                               z_N \\
                                                 \tilde P_0\Delta z_0 \\
                                               \tilde P_N\Delta z_N \\
                                              \end{array}
                                            \right),
\end{equation}
where  $\alpha_i\in\mathcal{M}_{1,4d}(\mathbb{C})$, $i=1,\cdots,2d$. Then
\begin{equation}\label{bc}
  (A,\,B){J_{2d}}(A,\,B)^*=0, \quad (A,\,B)Y=0,\quad (A,\,B)Z=0.
\end{equation}
Since ${\rm rank}(A,\,B)=2d$, the first equation in (\ref{bc}) yields that  each solution of the equation $(A,\,B)X=0$ is a linear combination
of ${J}_{2d}\af_i^*$, $1\leq i\leq 2d$.
From the last two equations  in (\ref{bc}),
we know that there exists $c_i, d_i\in\mathbb{C}$, $1\leq i \leq 2d$, such that  $Y=\sum_{i=1}^{2d}c_iJ_{2d}\af_i^*$ and $Z=\sum_{i=1}^{2d}d_iJ_{2d}\af_i^*$.
The first equation in (\ref{bc}) also implies that
\begin{equation*}
  \af_iJ_{2d}\af_j^*=0,\quad 1\le i,j\le 2d.
\end{equation*}
So
\begin{equation*}
  Z^*{J_{2d}}Y=(\sum_{i=1}^{2d}d_i{J_{2d}}\af_i^*)^*{J_{2d}}(\sum_{i=1}^{2d}c_i{J_{2d}}\af_i^*)=0,
\end{equation*}
which is equivalent to (\ref{same boundary}). The proof is complete.
\end{proof}

Note that the method used in Lemma 4.4 of \cite{Zhu2} depends on separated and coupled boundary conditions, and thus
can not be applied to Lemma \ref{sbc} here  for  mixing boundary conditions when  $d\ge 2$.
With the help of Lemma \ref{sbc}, we give the derivative formulae of the continuous simple eigenvalue branch with respect to coefficients of the Sturm-Liouville equations.

\begin{lemma}\label{equation-iden} Fix $\mathbf{A}\in \mathcal{B}^{\C}$.
Let ${\pmb\omega}=(P^{-1},Q,W)\in \Omega_N^{\C}$,  $\ld_*$ be a simple eigenvalue of $(\pmb\omega,\mathbf{A})$, $y\in  l[0, N+1]$ be a normalized eigenfunction for $\ld_*$,
and $\Ld$ be the continuous simple eigenvalue branch over $\Omega_N^{\C}$ through $\ld_*$. Then
\begin{align}\label{derivative-equation}
d \Ld |_{\pmb\omega}(H,K,L)=-\sum_{i=0}^{N-1}(P_i\Delta y_i)^*H_i(P_i\Delta y_i)+\sum_{i=1}^Ny_i^* K_iy_i
-\ld_*\sum_{i=1}^Ny_i^* L_iy_i
\end{align}
for all $(H,K,L)=((H_0,\cdots,H_N), (K_1,\cdots,K_N), (L_1,\cdots,L_N))\in (\mathcal{H}_{d}(\mathbb{C}))^{3N+1}$ and $(P^{-1}+H,Q+K,W+L)\in \Omega_N^{\C}$.
\end{lemma}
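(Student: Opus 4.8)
The plan is to evaluate $d\Lambda|_{\pmb\omega}(H,K,L)$ as a directional derivative along the segment $\pmb\omega_t=(P^{-1}+tH,\,Q+tK,\,W+tL)$, which stays in $\Omega_N^{\C}$ for $|t|$ small. Since $\lambda_*$ is simple, Lemma~\ref{zero} realizes it as a simple zero of the analytic $\Gamma_{(\pmb\omega_t,\mathbf{A})}$, so $\Lambda(t):=\Lambda(\pmb\omega_t)$ is real-analytic in $t$, while Lemma~\ref{continuous choice of eigenfunctions} supplies a continuous family of eigenfunctions $z=y(t)$ for $\Lambda(t)$ with $y(0)=y$. Writing $\tilde P_i,\tilde Q_i,\tilde W_i$ for the perturbed coefficients, I would record the two eigenequations: \eqref{Sturm-Liouville equation} for $(\lambda_*,y)$ at $\pmb\omega$, and the analogous one for $(\Lambda(t),z)$ at $\pmb\omega_t$.

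Next I would form $z_i^*\cdot[\text{equation for }y]$ and $[\text{equation for }z]^*\cdot y_i$, subtract, and sum over $1\le i\le N$. The core is a discrete Green--Lagrange identity coming from the product rule $z_i^*\nabla a_i=\nabla(z_i^*a_i)-(\nabla z_i)^*a_{i-1}$ applied to $a_i=P_i\Delta y_i$ and to $b_i=\tilde P_i\Delta z_i$. Telescoping then gives
\begin{equation*}
\sum_{i=1}^N\big(z_i^*[-\nabla(P_i\Delta y_i)]-[-\nabla(\tilde P_i\Delta z_i)]^*y_i\big)=\sum_{j=0}^{N-1}(\Delta z_j)^*(P_j-\tilde P_j)\Delta y_j+\mathcal{R},
\end{equation*}
with boundary term $\mathcal{R}=[(\Delta z_N)^*\tilde P_N y_N-z_N^*P_N\Delta y_N]-[(\Delta z_0)^*\tilde P_0 y_0-z_0^*P_0\Delta y_0]$. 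Because $y$ and $z$ obey the same boundary condition $\mathbf{A}$, Lemma~\ref{sbc} forces $\mathcal{R}=0$. The interior remainder survives precisely because the two equations carry different leading coefficients $P$ and $\tilde P$, and it is exactly this term that will generate the $H$-contribution.

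Combining the summed identity with the difference of the coefficient terms gives
\begin{equation*}
\sum_{j=0}^{N-1}(\Delta z_j)^*(P_j-\tilde P_j)\Delta y_j+\sum_{i=1}^N z_i^*(Q_i-\tilde Q_i)y_i=(\lambda_*-\Lambda(t))\sum_{i=1}^N z_i^*W_iy_i-t\,\Lambda(t)\sum_{i=1}^N z_i^*L_iy_i.
\end{equation*}
I would then insert $Q_i-\tilde Q_i=-tK_i$ and, using that the perturbation acts on $P^{-1}$, the expansion $\tilde P_j=(P_j^{-1}+tH_j)^{-1}=P_j-tP_jH_jP_j+O(t^2)$, so that $P_j-\tilde P_j=tP_jH_jP_j+O(t^2)$. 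Dividing by $t$ and letting $t\to0$, the continuity $y(t)\to y$ together with the normalization $\sum_{i=1}^N y_i^*W_iy_i=1$ shows that $(\Lambda(t)-\lambda_*)/t$ has a limit equal to $d\Lambda|_{\pmb\omega}(H,K,L)$, and one reads off
\begin{equation*}
d\Lambda|_{\pmb\omega}(H,K,L)=-\sum_{i=0}^{N-1}(\Delta y_i)^*P_iH_iP_i\Delta y_i+\sum_{i=1}^N y_i^*K_iy_i-\lambda_*\sum_{i=1}^N y_i^*L_iy_i,
\end{equation*}
which is \eqref{derivative-equation} after rewriting $(\Delta y_i)^*P_iH_iP_i\Delta y_i=(P_i\Delta y_i)^*H_i(P_i\Delta y_i)$ via $P_i=P_i^*$.

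I expect the main obstacle to be the bookkeeping in the Green--Lagrange identity: getting the summation by parts to produce the exact index range $0\le i\le N-1$ in the interior term, and verifying that the boundary contributions assemble into precisely the combination appearing in Lemma~\ref{sbc} (with $\tilde P$ on the $z$-side), so that they cancel. A secondary point of care is that the perturbation is placed on $P^{-1}$ rather than $P$; tracking the sandwiched factor $P_jH_jP_j$ through to the final $(P_i\Delta y_i)^*H_i(P_i\Delta y_i)$ is what reconciles the formula, and it also explains why $H_N$ does not appear.
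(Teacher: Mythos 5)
Your proof is correct and follows essentially the same route as the paper's: pair the two eigenvalue equations, sum by parts to obtain the discrete Green identity with interior term $\sum_{i=0}^{N-1}(\Delta z_i)^*(\tilde P_i-P_i)\Delta y_i$, kill the boundary terms by Lemma \ref{sbc}, and pass to the limit using Lemma \ref{continuous choice of eigenfunctions} and the normalization $\sum_{i=1}^N y_i^*W_iy_i=1$. The only cosmetic difference is that the paper converts the interior term exactly, writing $(\Delta z_i)^*(\tilde P_i-P_i)\Delta y_i=(\tilde P_i\Delta z_i)^*(P_i^{-1}-\tilde P_i^{-1})(P_i\Delta y_i)=-(\tilde P_i\Delta z_i)^*H_i(P_i\Delta y_i)$ with no remainder, whereas you Taylor-expand $(P_i^{-1}+tH_i)^{-1}=P_i-tP_iH_iP_i+O(t^2)$ and discard the $O(t^2)$ term in the limit; both are valid.
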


\begin{proof}
Let $\pmb{\sigma}\in \Omega_N^{\C}$ with $\pmb\sigma=({P}^{-1}+H,{Q}+K,{W}+L)=:(\tilde P^{-1},\tilde Q,\tilde W)$.
By Lemma \ref{continuous choice of eigenfunctions}, we can choose an eigenfunction $z = z(\cdot, \pmb{\sigma})$ for $\Ld=\Ld(\pmb{\sigma})$
with $\pmb{\sigma}$  sufficiently close to $\pmb\omega$ in $\Omega_N^{\C}$  such that $z\to y$ as $\pmb{\sigma}\to \pmb\omega$.
Then it follows from (\ref{Sturm-Liouville equation}) that
\begin{align*}
&[\Ld(\pmb\sigma) - \Ld(\pmb\omega)]\sum_{i=1}^Nz_i^* W_iy_i\\
=&\sum_{i=1}^N[z_i^*\nabla(P_i\Delta y_i)-(\nabla(\tilde{P}_i\Delta z_i))^*y_i]-\Ld(\pmb\sigma)\sum_{i=1}^Nz_i^* L_iy_i+\sum_{i=1}^Nz_i^* K_iy_i\\
=&-\sum_{i=0}^{N-1}\Delta z_i^*P_i\Delta y_i+\sum_{i=1}^N\nabla(z_i^*P_i\Delta y_i)+
\sum_{i=0}^{N-1}\Delta z_i^*\tilde{P}_i\Delta y_i -\sum_{i=1}^N\nabla(\Delta z_i^*\tilde{P}_iy_i)\\
&-\Ld(\pmb\sigma)\sum_{i=1}^Nz_i^* L_iy_i+\sum_{i=1}^Nz_i^* K_iy_i\\
=&\sum_{i=0}^{N-1}(\tilde{P}_i\Delta z_i)^*(P_i^{-1}-\tilde{P}_i^{-1})(P_i\Delta y_i)+z_N^*P_N\Delta y_N-z_0^*P_0\Delta y_0-(\Delta z_N)^*\tilde{P}_Ny_N+(\Delta z_0)^*\tilde{P}_0 y_0\\
&-\Ld(\pmb\sigma)\sum_{i=1}^Nz_i^* L_iy_i+\sum_{i=1}^Nz_i^* K_iy_i.
 \end{align*}
By Lemma \ref{sbc}, we  get
\begin{align*}
&[\Ld(\pmb\sigma) - \Ld(\pmb\omega)]\sum_{i=1}^Nz_i^* W_iy_i\\
=&-\sum_{i=0}^{N-1}(\tilde{P}_i\Delta z_i)^*H_i(P_i\Delta y_i)
-\Ld(\pmb\sigma)\sum_{i=1}^Nz_i^* L_iy_i+\sum_{i=1}^Nz_i^* K_iy_i,
 \end{align*}
which yields that (\ref{derivative-equation}) holds. This completes the proof.
\end{proof}

Let us fix all the components of $\pmb\omega$ except  $P_j^{-1}$,  and write the perturbed term $P_j^{-1}$ by $P_j^{-1}(\pmb\omega)$ to indicate its dependence on $\pmb\omega$ for a given $0\leq j\leq N-1$. $Q_i(\pmb\omega)$ has the similar meaning for $1\leq i\leq N$.
The we get the following monotonicity  result.

\begin{corol}\label{monotonicity equation}  Fix $\mathbf{A}\in \mathcal{B}^\mathbb{C}$. Let $\Lambda$ be a continuous eigenvalue branch defined on $\mathcal{V}\subset \Omega_N^{\C}$.
If  $  P_j^{-1}(\pmb\sigma)-P_j^{-1}(\pmb\omega)$ is positive semi-definite for a given $0\leq j\leq N-1$, then $\Lambda(\pmb\sigma)\leq\Lambda(\pmb\omega)$.
If  $  Q_i(\pmb\sigma)-Q_i(\pmb\omega)$ is positive semi-definite for a given $1\leq i\leq N$, then $\Lambda(\pmb\omega)\leq\Lambda(\pmb\sigma)$.
\end{corol}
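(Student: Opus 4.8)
The plan is to read the monotonicity statement directly off the derivative formula \eqref{derivative-equation} established in Lemma \ref{equation-iden}, combined with a standard integration (here, summation) argument along a line segment in the parameter space $\Omega_N^{\C}$. The key point is that \eqref{derivative-equation} expresses $d\Lambda$ as a sum of three Hermitian-form contributions, one from each of $P^{-1}$, $Q$, and $W$, so that perturbing a single coefficient in a definite direction produces a sign-definite change in $\Lambda$.

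\emph{Step one.} I would first reduce to the case of a simple eigenvalue branch, where \eqref{derivative-equation} is available. For the $P_j^{-1}$ statement, fix all components of $\pmb\omega$ except $P_j^{-1}$ and set $H = (H_0,\dots,H_N)$ with $H_j = P_j^{-1}(\pmb\sigma) - P_j^{-1}(\pmb\omega) \ge 0$ and $H_i = 0$ for $i \ne j$, and $K = L = 0$. Then the derivative formula collapses to the single term
\begin{equation*}
d\Lambda|_{\pmb\omega}(H,0,0) = -(P_j\Delta y_j)^* H_j (P_j\Delta y_j) \le 0,
\end{equation*}
since $H_j$ is positive semi-definite. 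Analogously, for the $Q_i$ statement I would take $K_i = Q_i(\pmb\sigma) - Q_i(\pmb\omega) \ge 0$, all other components of $H,K,L$ zero, giving $d\Lambda|_{\pmb\omega}(0,K,0) = y_i^* K_i y_i \ge 0$. The opposite sign conventions in the two assertions match exactly the opposite signs of the $P^{-1}$- and $Q$-terms in \eqref{derivative-equation}.

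\emph{Step two.} To pass from the infinitesimal (derivative) statement to the global inequality $\Lambda(\pmb\sigma) \le \Lambda(\pmb\omega)$, I would interpolate along the straight segment $\pmb\omega_t$, $t \in [0,1]$, joining $\pmb\omega_0 = \pmb\omega$ to $\pmb\omega_1 = \pmb\sigma$, where only the $P_j^{-1}$ (resp. $Q_i$) entry varies linearly and affinely. Since $H_j \ge 0$, every intermediate $P_j^{-1}(\pmb\omega_t) = P_j^{-1}(\pmb\omega) + t H_j$ still satisfies \eqref{3} (invertibility of $P_j$ is preserved because a positive semi-definite perturbation of a Hermitian invertible matrix along this segment stays in $\Omega_N^{\C}$, which I would verify), so the segment lies in $\Omega_N^{\C}$. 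Along this segment the continuous branch $\Lambda(\pmb\omega_t)$ is differentiable wherever the eigenvalue stays simple, with $\tfrac{d}{dt}\Lambda(\pmb\omega_t) = d\Lambda|_{\pmb\omega_t}(H,0,0) \le 0$ by Step one applied at $\pmb\omega_t$. Integrating from $t=0$ to $t=1$ yields $\Lambda(\pmb\sigma) \le \Lambda(\pmb\omega)$; the $Q_i$ case gives the reversed inequality by the same argument.

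\emph{Main obstacle.} The derivative formula \eqref{derivative-equation} is valid only for a \emph{simple} eigenvalue branch, so the delicate part is handling points on the segment where $\Lambda(\pmb\omega_t)$ coincides with a neighboring branch (a crossing or a higher-multiplicity point). I expect to treat this by a standard monotonicity-of-continuous-branches argument: the eigenvalue counting function $t \mapsto \sharp_1(\sigma(\pmb\omega_t,\mathbf{A}) \cap (-\infty,c))$ is monotone in $t$ by Lemma \ref{small perturbation} and the semi-definiteness of the perturbation, so the (finitely many) multiplicity points do not spoil the net monotonicity of the branch; one passes through them by continuity of $\Lambda$ (Lemma \ref{continuous eigenvalue branch}) and a limiting argument approximating $H_j$ by strictly positive definite perturbations if needed. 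Alternatively, since the set of $t$ at which the branch fails to be simple is finite, $\Lambda(\pmb\omega_t)$ is continuous and piecewise-$C^1$ with nonpositive (resp. nonnegative) derivative off this finite set, which already forces the global monotone inequality.
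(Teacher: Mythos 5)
Your overall route---reading the sign off the derivative formula \eqref{derivative-equation} of Lemma \ref{equation-iden} with a single-coefficient perturbation, then integrating along the straight segment joining $\pmb\omega$ to $\pmb\sigma$---is exactly how the paper intends Corollary \ref{monotonicity equation} to be obtained (the paper offers no separate proof; the corollary is stated as an immediate consequence of Lemma \ref{equation-iden}). Your Step one is correct, and you implicitly respect the reason the $P$-statement is restricted to $0\le j\le N-1$: the component $H_N$ does not appear in \eqref{derivative-equation} at all.

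However, Step two contains a genuine flaw: the parenthetical claim that ``a positive semi-definite perturbation of a Hermitian invertible matrix along this segment stays in $\Omega_N^{\C}$'' is false, so the verification you defer would not succeed. Already for $d=1$ take $P_j^{-1}(\pmb\omega)=-1$ and $H_j=2$: both endpoints $-1$ and $+1$ are invertible, but the midpoint $P_j^{-1}(\pmb\omega)+\tfrac12 H_j=0$ is not, so the straight segment exits $\Omega_N^{\C}$ (and a fortiori exits $\mathcal{V}$); in general the eigenvalues of $P_j^{-1}(\pmb\omega)+tH_j$ can cross zero as $t$ increases. No matrix lemma can repair this; the corollary has to be read with the implicit hypothesis that the connecting segment lies in the domain $\mathcal{V}$ of the branch, which is precisely how the paper uses it in the proof of Theorem \ref{main result equation discrete case}: there the comparison is between points $\pmb\sigma_{t_1},\pmb\sigma_{t_2}$ on a path that is itself the segment and stays inside $\mathcal{V}_\varepsilon^{(r^0,r^+,r^-)}$, where Lemma \ref{continuity criteria} guarantees $\lambda_n$ is a continuous branch. (The $Q_i$ half of the corollary does not suffer from this, since $Q_i+tK_i$ is Hermitian for every $t$ and hence admissible.) A secondary, smaller issue: your claim that the branch fails to be simple at only finitely many $t$ needs justification---eigenvalue collisions along the segment are zeros of a function analytic in $t$ (built from $\Gamma_{(\pmb\omega_t,\mathbf{A})}$), hence finite in number \emph{unless} the degeneracy persists identically in $t$, and that permanently degenerate case is covered neither by your argument nor, admittedly, by anything explicit in the paper.
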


Then we give the  derivative formula of a continuous simple eigenvalue
branch with respect to boundary conditions.

\begin{lemma}\label{derivative formula1}
Fix $\pmb\omega\in \Omega_N^\mathbb{C}$. Let $\ld_*$ be a simple eigenvalue of $(\pmb\omega, \mathbf{A})$
for some $\mathbf{A}\in \mathcal{O}_{K}^\mathbb{C}$,
$y \in l[0, N + 1]$ be a normalized eigenfunction for $\ld_*$, and $\Ld$
be the continuous simple eigenvalue branch  through $\ld_*$.
Then
\begin{equation*}
  d \Ld |_{\mathbf{A}}(H)=Y^*E_{K,1}^*H E_{K,1}Y
\end{equation*}
for $H\in\mathcal{H}_{2d}(\mathbb{C})$, where
$E_{K,1}$ and $Y$ are given in (\ref{construction of EK}) and (\ref{YZ-def}), respectively.
\end{lemma}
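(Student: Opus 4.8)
The plan is to compare the fixed eigenfunction $y$ for $\ld_*=\Ld(\mathbf{A})$ with a nearby eigenfunction for a perturbed boundary condition, and to extract the boundary contribution through the symplectic structure carried by $E_K$. Writing $S=S(\mathbf{A})$, I would move along the coordinate line $\mathbf{B}(t):=[(S+tH\,|\,I_{2d})E_K]\in\mathcal{O}_K^{\C}$ for small real $t$, so that $\mathbf{B}(0)=\mathbf{A}$ and $H\in\mathcal{H}_{2d}(\C)$ is the prescribed tangent direction. By Lemma \ref{continuous choice of eigenfunctions}, applied with $\pmb\omega$ held fixed and the boundary condition varying, there is a continuous choice of eigenfunctions $z=z(\cdot,t)$ for the branch value $\tilde\ld:=\Ld(\mathbf{B}(t))$ with $z\to y$ in $\C^{(N+2)d}$ as $t\to0$.

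The first main step is a discrete Green (Lagrange) identity. Since $y$ and $z$ solve \eqref{Sturm-Liouville equation} for the \emph{same} $\pmb\omega$ but with spectral parameters $\ld_*$ and $\tilde\ld$, multiplying the two equations by $z_i^*$ and $y_i^*$ respectively, subtracting (using the Hermiticity of $Q_i,W_i$ and the reality of eigenvalues), summing over $1\le i\le N$, and carrying out the same discrete summation by parts as in the proof of Lemma \ref{equation-iden} — now specialized to $\tilde P=P$, so that all interior terms cancel — would give
\begin{align*}
(\tilde\ld-\ld_*)\sum_{i=1}^N z_i^*W_iy_i=-\,Z^*J_{2d}Y,
\end{align*}
where $Y,Z$ are the boundary-data vectors of \eqref{YZ-def} with $\tilde P=P$. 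The essential point is that, unlike in Lemma \ref{sbc}, the functions $y$ and $z$ here satisfy \emph{different} boundary conditions, so the symplectic pairing $Z^*J_{2d}Y$ does \emph{not} cancel; it is exactly this surviving boundary term that produces the derivative.

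The second main step is to evaluate $Z^*J_{2d}Y$ in the local coordinates. The boundary conditions read $(S\,|\,I_{2d})E_KY=0$ and $(S+tH\,|\,I_{2d})E_KZ=0$; since $(S\,|\,I_{2d})E_K=SE_{K,1}+E_{K,2}$, these are equivalent to $E_{K,2}Y=-S\,E_{K,1}Y$ and $E_{K,2}Z=-(S+tH)E_{K,1}Z$. Using $E_K^*J_{2d}E_K=J_{2d}$ and $E_KE_K^*=I_{4d}$ from \eqref{EKrelation} to rewrite $Z^*J_{2d}Y=(E_KZ)^*J_{2d}(E_KY)$, then multiplying out the block form of $J_{2d}$ and substituting the two coordinate relations (with $S,H$ Hermitian and $t$ real) collapses everything to
\begin{align*}
Z^*J_{2d}Y=-t\,(E_{K,1}Z)^*H(E_{K,1}Y).
\end{align*}
Combining this with the Green identity, dividing by $t$, and letting $t\to0$ — whereupon $z\to y$ forces $E_{K,1}Z\to E_{K,1}Y$ and $\sum_i z_i^*W_iy_i\to\sum_i y_i^*W_iy_i=1$ by normalization — would yield $d\Ld|_{\mathbf{A}}(H)=Y^*E_{K,1}^*HE_{K,1}Y$.

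I expect the summation-by-parts bookkeeping to be routine, since it duplicates the computation already in Lemma \ref{equation-iden}; the real work is the clean identification of the boundary term with the symplectic form $Z^*J_{2d}Y$ and its reduction via the coordinate relations $E_{K,2}\,Y=-S\,E_{K,1}Y$ and $E_{K,2}\,Z=-(S+tH)E_{K,1}Z$. The delicate part is keeping the conjugate-transpose placements and all signs consistent so that the two $S$-contributions cancel and precisely $-tH$ remains; getting this right is what converts the nonvanishing boundary pairing into the stated Hermitian form $Y^*E_{K,1}^*HE_{K,1}Y$ in the direction $H$.
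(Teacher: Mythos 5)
Your proposal is correct and follows essentially the same route as the paper's proof: perturb $S(\mathbf{A})$ within the coordinate chart $\mathcal{O}_K^{\mathbb{C}}$, choose eigenfunctions continuously via Lemma \ref{continuous choice of eigenfunctions}, derive the discrete Green identity (whose interior terms cancel since the equation is unchanged), and collapse the surviving symplectic boundary pairing using (\ref{EKrelation}) together with the coordinate relations $E_{K,2}Y=-SE_{K,1}Y$ and $E_{K,2}Z=-(S+tH)E_{K,1}Z$. The only cosmetic differences are that the paper perturbs by $H$ directly rather than by $tH$ with $t\to 0$, obtaining the exact identity $(\Lambda(\mathbf{B})-\Lambda(\mathbf{A}))\sum_{i}\tilde y_i^*W_iy_i=\tilde Y^*E_{K,1}^*HE_{K,1}Y$ and then letting $H\to 0$, and that it writes the boundary pairing as $\tilde Y^*J_{2d}^*Y$ instead of your equivalent $-Z^*J_{2d}Y$.
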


\begin{proof} By (\ref{co-sys}), there exists $S\in\mathcal{H}_{2d}(\mathbb{C})$ such that $\mathbf{A}=[S\;|\;I_{2d}]E_K$. Let $\mathbf{B}=[S+H\;|\;I_{2d}]E_K$ with $H\in\mathcal{H}_{2d}$.
   Then there exists an eigenfunction $\tilde y=y_{\Lambda(\mathbf{B})}$ for $\Lambda(\mathbf{B})$ such that $\tilde y\to y$  in $\mathbb{C}^{(N+2)d}$ as $\mathbf{B}\to \mathbf{A}$.
$\tilde Y$ has the similar meaning as $Y$.
Note that  $\tilde y$ and ${y}$ satisfy
\begin{align*}
-\nabla(P_i\Delta\tilde{y}_i)+Q_i\tilde{y}_i=\Lambda(\mathbf{B})W_i\tilde{y}_i,\;\;
-\nabla(P_i \Delta y_i)+Q_i y_i=\Lambda(\mathbf{A})W_i y_i,\;1\leq i\leq N,
\end{align*}
and thus
\begin{align*}
(\Lambda(\mathbf{B})-\Lambda(\mathbf{A}))\tilde y_i^* W_i y_i
=-\Delta[y_{i-1},\tilde y_{i-1}],
\end{align*}
where $[y_i,\tilde y_i]=(\Delta \tilde y_i)^*P_iy_i-\tilde y_i^*P_i\Delta y_i$.
From the boundary conditions  $\mathbf A$ and $\mathbf B$,
we have
\begin{align}\label{boundary condition deriveSE}
SE_{K,1}Y+E_{K,2}Y=0\;{\rm and}\;\;
(S+H)E_{K,1}\tilde Y+E_{K,2}\tilde Y=0.
\end{align}
It then follows from (\ref{EKrelation}) and (\ref{boundary condition deriveSE}) that
\begin{align*}
(\Lambda(\mathbf{B})-\Lambda(\mathbf{A}))\sum_{i=1}^N\tilde y_i^* W_i y_i
=&[y_{0},\tilde y_{0}]-[y_{N},\tilde y_{N}]=\tilde Y^*J_{2d}^*{Y}\\
=&\tilde Y^*E_K^*J_{2d}^*E_K{Y}
=-\tilde Y^*E_{K,2}^*E_{K,1}Y+\tilde Y^*E_{K,1}^*E_{K,2}Y\\
=&\tilde Y^*E_{K,1}^*(S+H)E_{K,1}Y-\tilde Y^*E_{K,1}^*SE_{K,1}Y=\tilde Y^*E_{K,1}^*HE_{K,1}Y.
\end{align*}
This completes the proof.
\end{proof}
The following result is a direct consequence of Lemma \ref{derivative formula1}.
\begin{corol}\label{cor-monotonicity-boundary}
 Fix $\pmb\omega\in \Omega_N^\mathbb{C}$. Let $\Lambda$ be a continuous eigenvalue branch defined on $\mathcal{U}\subset \mathcal{O}_K^{\C}$.
Then $\Lambda(\mathbf{A})\leq\Lambda(\mathbf{B})$ if $\mathbf{A},\mathbf{B}\in\mathcal{U}$ and $S(\mathbf{B})-S(\mathbf{A})$ is positive semi-definite.
\end{corol}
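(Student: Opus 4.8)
The plan is to reduce the two-point inequality to the monotonicity of $\Lambda$ along the straight line in the chart $\mathcal{O}_K^{\C}$ joining $\mathbf{A}$ to $\mathbf{B}$, and then to read off the sign of its derivative from Lemma \ref{derivative formula1}. Since the chart $\mathcal{O}_K^{\C}$ is parametrized by the Hermitian matrix $S$ and $\mathcal{H}_{2d}(\mathbb{C})$ is a real vector space, I would set $\Delta S:=S(\mathbf{B})-S(\mathbf{A})$ (which is positive semi-definite by hypothesis) and consider the segment
\begin{equation*}
  S(t):=S(\mathbf{A})+t\,\Delta S,\qquad \mathbf{A}(t):=[(S(t)\,|\,I_{2d})E_K],\qquad t\in[0,1],
\end{equation*}
which may be taken inside $\mathcal{U}$, with $\mathbf{A}(0)=\mathbf{A}$ and $\mathbf{A}(1)=\mathbf{B}$. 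The function $t\mapsto\Lambda(\mathbf{A}(t))$ is then a continuous real-valued function on $[0,1]$, and it suffices to show it is non-decreasing.

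At any parameter $t$ where $\Lambda(\mathbf{A}(t))$ is a \emph{simple} eigenvalue, Lemma \ref{derivative formula1} applies with perturbation direction $H=\Delta S$, and, writing $y(t)$ for the associated normalized eigenfunction and $Y(t)$ for the corresponding boundary vector, it gives
\begin{equation*}
  \frac{d}{dt}\Lambda(\mathbf{A}(t))=d\Lambda|_{\mathbf{A}(t)}(\Delta S)=\big(E_{K,1}Y(t)\big)^*\,\Delta S\,\big(E_{K,1}Y(t)\big)\ge 0,
\end{equation*}
the last inequality because $\Delta S$ is positive semi-definite. Hence on every subinterval of $[0,1]$ consisting of simple eigenvalues the branch is non-decreasing, and evaluating at the endpoints $t=0,1$ would give $\Lambda(\mathbf{A})\le\Lambda(\mathbf{B})$.

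The main obstacle is that Lemma \ref{derivative formula1} is valid only at simple eigenvalues, whereas $\Lambda(\mathbf{A}(t))$ may meet neighboring eigenvalue branches and lose differentiability at such crossings. I would remove this difficulty using the real-analytic dependence of the family on the affine parameter $t$: by analytic perturbation theory the eigenvalues lying in the relevant window are given by real-analytic functions of $t$, so two distinct branches can coincide only at isolated points and the crossing set in $[0,1]$ is finite. On each of the finitely many open subintervals between consecutive crossings the value $\Lambda(\mathbf{A}(t))$ is a simple eigenvalue, hence non-decreasing by the computation above, and the continuity of $t\mapsto\Lambda(\mathbf{A}(t))$ then propagates monotonicity across the finitely many crossing parameters. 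This yields $\Lambda(\mathbf{A})\le\Lambda(\mathbf{B})$ and completes the argument.
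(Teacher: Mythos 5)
Your overall route is exactly the paper's own: the paper offers no proof of this corollary beyond the remark that it is ``a direct consequence of Lemma \ref{derivative formula1}'', and your reduction to the segment $S(t)=S(\mathbf{A})+t\,\Delta S$ together with the sign computation $\frac{d}{dt}\Lambda(\mathbf{A}(t))=\bigl(E_{K,1}Y(t)\bigr)^*\Delta S\,\bigl(E_{K,1}Y(t)\bigr)\ge 0$ is precisely the intended argument. The genuine gap is in the step where you dispose of non-simple eigenvalues. Real-analyticity in $t$ only forces \emph{distinct} analytic eigenvalue branches to meet at isolated points; it does not exclude two (or more) analytic branches that coincide \emph{identically} in $t$. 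In that case $\Lambda(\mathbf{A}(t))$ is a multiple eigenvalue for every $t$ in a whole subinterval, there is no open subinterval on which it is simple, and Lemma \ref{derivative formula1} --- whose statement and proof (resting on the eigenfunction-continuity Lemma \ref{continuous choice of eigenfunctions}) are restricted to simple eigenvalues --- can never be invoked there. This scenario is not exotic in the setting of this paper, which is precisely about dimension $d\ge 2$: take the Sturm--Liouville problem to be the direct sum of two copies of one and the same scalar problem, with $K$ and $S(\mathbf{A})$, $\Delta S$ compatible with that splitting; then every eigenvalue has multiplicity two for all $t\in[0,1]$, your crossing analysis is vacuous, and the proof produces no information at all.

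To close the gap you need a derivative or monotonicity statement that survives permanent degeneracy. One repair: along the one-parameter analytic family $t\mapsto\mathbf{A}(t)$, Rellich-type analytic perturbation theory (see \cite{Kato1984}) allows the eigenvalues $\mu_j(t)$ in the window \emph{together with} corresponding eigenfunctions to be chosen analytically in $t$, even through points or intervals of higher multiplicity; the computation proving Lemma \ref{derivative formula1} uses nothing about simplicity except the availability of such a continuous choice of eigenfunctions along the branch, so it applies verbatim to each analytic branch and yields $\mu_j'(t)\ge 0$ for every $j$. Since a continuous eigenvalue branch is, on the segment, obtained by sorting the values $\mu_j(t)$, and sorting preserves the property of being non-decreasing, the inequality $\Lambda(\mathbf{A})\le\Lambda(\mathbf{B})$ follows in all cases. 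A further minor point: you should justify that the segment lies in $\mathcal{U}$; this is automatic if $\mathcal{U}$ is taken to be a ball in the $S$-coordinates of $\mathcal{O}_K^{\C}$ (which is convex), but it is false for a general connected open $\mathcal{U}$, so the statement should be read with such a choice of $\mathcal{U}$, as it implicitly is in the paper's applications.
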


\subsection{Properties of the $n$-th eigenvalue.}
Based on Lemma \ref{number of eigenvalues},
the eigenvalues of $(\pmb\omega,\mathbf{A})\in\Omega_N^{\C} \times\mathcal{ B}^{\C}$ can be arranged in the following non-decreasing order:
\begin{align*}
\lambda_1(\pmb\omega,\mathbf{A})\leq\lambda_2(\pmb\omega,\mathbf{A})\leq \cdots\leq \lambda_{\sharp_1(\sigma(\pmb\omega,\mathbf{A}))}(\pmb\omega,\mathbf{A}).
\end{align*}
Therefore, for any $1\leq n\leq Nd$, the $n$-th eigenvalue can be regarded as a function defined on $\Omega_N^{\C} \times\mathcal{ B}^{\C}$ or on its subset, called the $n$-th eigenvalue function.
Firstly, we provide a criterion for all these functions to be
continuous on a subset of $\Omega_N^{\C} \times\mathcal{ B}^{\C}$.

\begin{lemma}\label{continuity criteria} Let $\mathcal{O}$ be a connected subset of $\Omega^{\C}_N\times \mathcal{B}^{\C}$.
If $ \sharp_1(\sigma(\pmb\omega,\mathbf{A}))\equiv k_0$, $(\pmb\omega,\mathbf{A})\in \mathcal{O}$, for some $k_0>0$,
then the restrictions of $\lambda_n$, $1\leq n\leq k_0$, to $\mathcal{O}$ are continuous. Moreover,
they are locally continuous eigenvalue branches on $\mathcal{O}$.
\end{lemma}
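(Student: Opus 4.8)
The plan is to fix an arbitrary base point $(\pmb\omega_0,\mathbf{A}_0)\in\mathcal{O}$ and to show that on a neighborhood of it each $\lambda_n$, $1\le n\le k_0$, coincides with one of the ordered continuous branches furnished by Lemma \ref{continuous eigenvalue branch}. Continuity of $\lambda_n$ and the local-branch assertion then follow at once, and since the base point is arbitrary, the conclusion holds on all of $\mathcal{O}$. First I would list the distinct eigenvalues of $(\pmb\omega_0,\mathbf{A}_0)$ as $\mu_1<\cdots<\mu_p$ with multiplicities $m_1,\dots,m_p$, so that $\sum_{j=1}^p m_j=k_0$ by hypothesis. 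For each $j$, apply Lemma \ref{continuous eigenvalue branch} at $\mu_j$ to get $\varepsilon_j>0$, a connected neighborhood $\mathcal{U}_j$, and ordered continuous branches $\Ld_1^{(j)}\le\cdots\le\Ld_{m_j}^{(j)}$ whose values are eigenvalues lying in $(\mu_j-\varepsilon_j,\mu_j+\varepsilon_j)$, with $\mu_j\pm\varepsilon_j\notin\sigma(\pmb\omega,\mathbf{A})$ throughout. Shrinking the $\varepsilon_j$ I arrange the closed intervals $[\mu_j-\varepsilon_j,\mu_j+\varepsilon_j]$ to be pairwise disjoint with $\mu_j+\varepsilon_j<\mu_{j+1}-\varepsilon_{j+1}$, and set $\mathcal{U}=\mathcal{O}\cap\bigcap_{j=1}^p\mathcal{U}_j$, a neighborhood of the base point in $\mathcal{O}$.

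The crucial step is a counting argument resting on the hypothesis $\sharp_1(\sigma(\pmb\omega,\mathbf{A}))\equiv k_0$. For every $(\pmb\omega,\mathbf{A})\in\mathcal{U}$, Lemma \ref{small perturbation} applied with $c_1=\mu_j-\varepsilon_j$, $c_2=\mu_j+\varepsilon_j$ gives exactly $m_j$ eigenvalues in $(\mu_j-\varepsilon_j,\mu_j+\varepsilon_j)$, counting multiplicities, and these are precisely the branch values $\Ld_1^{(j)},\dots,\Ld_{m_j}^{(j)}$. Hence the disjoint union of the $p$ intervals contains $\sum_j m_j=k_0$ eigenvalues; since the total eigenvalue count is also $k_0$ by assumption, there can be no eigenvalue outside $\bigcup_j(\mu_j-\varepsilon_j,\mu_j+\varepsilon_j)$. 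This is exactly where constancy of the count is indispensable: it forbids eigenvalues from escaping (or entering) the trapping intervals, which in the discrete setting is precisely the mechanism behind the jump phenomena to $\pm\infty$ described in the introduction.

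Because the intervals are disjoint and ordered, all $m_1$ eigenvalues in the first interval are smaller than the $m_2$ in the second, and so on. Writing $M_j=m_1+\cdots+m_j$ with $M_0=0$, the ordered eigenvalue $\lambda_n$ with $M_{j-1}<n\le M_j$ is the $(n-M_{j-1})$-th smallest eigenvalue inside the $j$-th interval; that is, $\lambda_n|_{\mathcal{U}}=\Ld^{(j)}_{\,n-M_{j-1}}$. Since each $\Ld^{(j)}_i$ is continuous, this identity shows that $\lambda_n$ is continuous on $\mathcal{U}$ and locally equal to a continuous eigenvalue branch. As $(\pmb\omega_0,\mathbf{A}_0)\in\mathcal{O}$ was arbitrary, the same holds throughout $\mathcal{O}$.

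I expect the only delicate point to be the counting step, and in particular verifying that the branches from Lemma \ref{continuous eigenvalue branch} genuinely exhaust, with the correct multiplicities, the spectrum inside each interval, so that the ``trapping'' of all $k_0$ eigenvalues is airtight. Once that is secured, the disjointness of the intervals reduces the ordering to a blockwise matching and the continuity is immediate from the continuity of the individual branches.
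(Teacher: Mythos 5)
Your proof is correct and follows essentially the same route as the paper: the paper gives no inline proof but defers to Theorem 2.1 of \cite{Zhu2} (noting the argument is dimension-independent), and that argument is exactly your trapping/counting scheme --- disjoint intervals around the distinct eigenvalues, local constancy of the count in each interval via the perturbation lemma, constancy of the total count $k_0$ to forbid eigenvalues outside the intervals, and blockwise identification of $\lambda_n$ with the ordered local branches. The one point you flag --- that the branches of Lemma \ref{continuous eigenvalue branch} exhaust the spectrum in each interval with multiplicity --- is indeed implicit in that lemma's construction (the branches are defined as the ordered zeros of $\Gamma_{(\pmb\omega,\mathbf{A})}$ near $\lambda_*$, whose number is fixed at $m_j$ by Lemma \ref{small perturbation}), so it is not a gap.
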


Then we list several other properties of the $n$-th eigenvalue function
 in order to study its asymptotic behavior. The following lemma strengths the result in Theorem 2.2 of  \cite{Zhu1}.

\begin{lemma}\label{infinity-finity}
 Let $\mathcal{O}\subset\Omega^{\C}_N\times \mathcal{B}^{\C}$,
 $\sharp_1(\sigma(\pmb\omega,\mathbf{A}))=m_1+m_2+m_3$ for all $(\pmb\omega,\mathbf{A})\in \mathcal{O}$, and $\sharp_1(\sigma(\pmb\omega_0,\mathbf{A}_0))=m_2$  for some $(\pmb\omega_0,\mathbf{A}_0)\in \bar{\mathcal{O}}\setminus \mathcal{O}$, where $ m_i\geq 0$, $1\leq i\leq 3$. If
\begin{equation}\label{asymptotic-infty}
 \lim_{\mathcal{O}\ni(\pmb\omega,\mathbf{A})\to (\pmb\omega_0,\mathbf{A}_0)}\ld_{n}(\pmb\omega,\mathbf{A})=-\infty,\;1\le n\le m_1,
\end{equation}
and
\begin{equation}\label{asymptotic+infty}
 \lim_{\mathcal{O}\ni(\pmb\omega,\mathbf{A})\to (\pmb\omega_0,\mathbf{A}_0)}\ld_{n}(\pmb\omega,\mathbf{A})=+\infty,\;m_1+m_2+1\le n\le m_1+m_2+m_3,
\end{equation}
then
\begin{equation*}
 \lim_{\mathcal{O}\ni(\pmb\omega,\mathbf{A})\to (\pmb\omega_0,\mathbf{A}_0)}\ld_{n}(\pmb\omega,\mathbf{A})=\ld_{n-m_1}(\pmb\omega_0,\mathbf{A}_0),\;m_1+1\le n\le m_1+m_2.
\end{equation*}
\end{lemma}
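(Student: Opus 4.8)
The plan is to reduce the statement to a counting argument resting entirely on the local constancy of the eigenvalue count from Lemma \ref{small perturbation}. Write $\lambda_i^0:=\lambda_i(\pmb\omega_0,\mathbf{A}_0)$, $1\le i\le m_2$, for the eigenvalues of the limiting problem, and let $M(c):=\sharp_1(\sigma(\pmb\omega_0,\mathbf{A}_0)\cap(-\infty,c))$ be its counting function, defined for $c\in\mathbb{R}\setminus\sigma(\pmb\omega_0,\mathbf{A}_0)$. The heart of the matter is to establish that, for every such $c$ and every $(\pmb\omega,\mathbf{A})\in\mathcal{O}$ close enough to $(\pmb\omega_0,\mathbf{A}_0)$,
\[
\sharp_1\big(\sigma(\pmb\omega,\mathbf{A})\cap(-\infty,c)\big)=m_1+M(c),
\]
i.e. the counting function of $(\pmb\omega,\mathbf{A})$ is just that of the limit shifted up by the $m_1$ eigenvalues that have dropped below it.

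First I would fix $d<\lambda_1^0$ and $c_+>\lambda_{m_2}^0$, both off $\sigma(\pmb\omega_0,\mathbf{A}_0)$, so that $(d,c_+)$ captures the entire limiting spectrum and $M(c_+)=m_2$. Applying Lemma \ref{small perturbation} to $(d,c_+)$ gives a neighborhood on which $d,c_+\notin\sigma(\pmb\omega,\mathbf{A})$ and the number of eigenvalues in $(d,c_+)$ equals $m_2$. Let $a$ and $b$ count the eigenvalues of $(\pmb\omega,\mathbf{A})$ below $d$ and above $c_+$, so that $a+m_2+b=m_1+m_2+m_3$. The key step is that the escape hypothesis \eqref{asymptotic-infty} forces $\lambda_{m_1}(\pmb\omega,\mathbf{A})<d$ for close points, whence $a\ge m_1$ by monotonicity of the ordering; symmetrically \eqref{asymptotic+infty} gives $b\ge m_3$. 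Combined with $a+b=m_1+m_3$ these lower bounds squeeze out $a=m_1$ and $b=m_3$ exactly. A second application of Lemma \ref{small perturbation} to an interval $(d,c)$ with $d<c<c_+$ then yields the displayed identity, since the count in $(d,c)$ is $M(c)$ while exactly $m_1$ eigenvalues lie below $d$.

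The conclusion then follows by a squeeze. Fix $n=m_1+j$ with $1\le j\le m_2$, and choose $\varepsilon>0$ small enough that $(\lambda_j^0-\varepsilon,\lambda_j^0+\varepsilon)$ meets $\sigma(\pmb\omega_0,\mathbf{A}_0)$ only at $\lambda_j^0$; set $p=M(\lambda_j^0-\varepsilon)$ and $p+q=M(\lambda_j^0+\varepsilon)$, so that $q$ is the multiplicity of $\lambda_j^0$ and $p<j\le p+q$. The counting identity says $\lambda_n(\pmb\omega,\mathbf{A})<c$ is equivalent to $n\le m_1+M(c)$; evaluating at $c=\lambda_j^0-\varepsilon$ gives $\lambda_n(\pmb\omega,\mathbf{A})\ge\lambda_j^0-\varepsilon$ because $m_1+j>m_1+p$, and evaluating at $c=\lambda_j^0+\varepsilon$ gives $\lambda_n(\pmb\omega,\mathbf{A})<\lambda_j^0+\varepsilon$ because $m_1+j\le m_1+p+q$. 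Hence $|\lambda_n(\pmb\omega,\mathbf{A})-\lambda_{n-m_1}(\pmb\omega_0,\mathbf{A}_0)|<\varepsilon$ for $(\pmb\omega,\mathbf{A})$ near $(\pmb\omega_0,\mathbf{A}_0)$, which is the desired limit.

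The step I expect to be the crux is pinning the counts down to the \emph{exact} values $a=m_1$ and $b=m_3$ rather than mere lower bounds: this is precisely where one must invoke both escape hypotheses simultaneously together with the global conservation of the total eigenvalue count $m_1+m_2+m_3$ on $\mathcal{O}$, realized through the single interval $(d,c_+)$ that contains the whole limiting spectrum. Once this bookkeeping is secured, the remainder is a routine translation between counting functions and ordered eigenvalues, with finiteness of the spectrum ensuring that all auxiliary endpoints $d$, $c_+$ and $\lambda_j^0\pm\varepsilon$ can be chosen off the spectrum and all neighborhoods intersected into a common one.
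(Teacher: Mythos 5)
Your proof is correct and follows essentially the same route as the paper: both apply Lemma \ref{small perturbation} to an interval containing the whole limiting spectrum, use the two escape hypotheses together with conservation of the total count $m_1+m_2+m_3$ to pin the counts below and above that interval to exactly $m_1$ and $m_3$, and then invoke Lemma \ref{small perturbation} a second time on smaller intervals to get convergence of the middle eigenvalues. Your counting-function squeeze at the end simply spells out what the paper compresses into ``the conclusion holds again by Lemma \ref{small perturbation}.''
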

\begin{proof} Let $c_1,c_2\in\mathbb{R}$ such that $\sharp_1(\sigma(\pmb\omega_0,\mathbf{A}_0)\cap(c_1,c_2))=m_2$. Then we get  by Lemma \ref{small perturbation} that  there exists a neighborhood $\mathcal{U}\subset\mathcal{O}$ of $(\pmb\omega_0,\mathbf{A}_0)$  such that $\sharp_1(\sigma(\pmb\omega,\mathbf{A})\cap(c_1,c_2))=m_2$ and $c_1,c_2\notin \sigma(\pmb\omega,\mathbf{A})$ for all $(\pmb\omega,\mathbf{A})\in\mathcal{U}$. It follows from (\ref{asymptotic-infty})--(\ref{asymptotic+infty}) that $\mathcal{U}$ can be shrunk such that $\sharp_1(\sigma(\pmb\omega,\mathbf{A})\cap(-\infty,c_1))=m_1$ and $\sharp_1(\sigma(\pmb\omega,\mathbf{A})\cap(c_2,\infty ))=m_3$  for all $(\pmb\omega,\mathbf{A})\in\mathcal{U}$. This implies that $\sigma(\pmb\omega,\mathbf{A})\cap(c_1,c_2)=\{\ld_{n}(\pmb\omega,\mathbf{A}):\;m_1+1\le n\le m_1+m_2\}$ for all $(\pmb\omega,\mathbf{A})\in\mathcal{U}$. Then the conclusion holds again by Lemma \ref{small perturbation}.
\end{proof}

\begin{lemma}\label{integer}
 Let $\mathcal{O}$ be a connected subset of $\Omega^{\C}_N\times \mathcal{B}^{\C}$
and $c_1,c_2\in\mathbb{R}$ with $c_1<c_2$.
Assume that for all $(\pmb\omega,\mathbf{A})\in\mathcal{O}$,  $\sharp_1(\sigma(\pmb\omega,\mathbf{A}))=k$,
$\sharp_1(\sigma(\pmb\omega,\mathbf{A})\cap(c_1, c_2))=m$ with $m<k$, and
  $c_1, c_2\notin\sigma(\pmb\omega,\mathbf{A})$.
Then the other $k-m$ eigenvalues out of $[c_1, c_2]$, denoted by
$\hat{\ld}_1(\pmb\omega,\mathbf{A})\le \cdots \le \hat{\ld}_{k-m}(\pmb\omega,\mathbf{A})$,
have the following properties.
 \begin{itemize}
\item [$(1)$] Let  $E_i=\{\hat{\ld}_{i}(\pmb\omega,\mathbf{A}):\,(\pmb\omega,\mathbf{A})\in \mathcal{O}\}$. Then for all $1\le i\le k-m$,
\begin{equation*}
 {\rm either}\, E_i\subset(-\infty,c_1)\; {\rm or}\;
 E_i\subset (c_2, +\infty),
\end{equation*}
and there exists $1\le i_0\le k$ such that $\hat{\ld}_i=\ld_{i_0}$
is continuous on $\mathcal{O}$.
\item [$(2)$] Let $(\pmb\omega_0,\mathbf{A}_0)\in \bar{\mathcal{O}}\setminus \mathcal{O}$, $\sharp_1(\sigma(\pmb\omega_0,\mathbf{A}_0))
=m$, and  $\sigma(\pmb\omega_0,\mathbf{A}_0)\subset(c_1, c_2)$.
If $ E_{i_0}\subset (-\infty,c_1)$ for some $1 \le i_0\le k-m$, then
\begin{equation*}
 \lim_{\mathcal{O}\ni(\pmb\omega,\mathbf{A})\to (\pmb\omega_0,\mathbf{A}_0)}\hat{\ld}_{i}(\pmb\omega,\mathbf{A})=-\infty,\;1\leq i\leq i_0.
\end{equation*}
If $ E_{j_0}\subset (c_2, +\infty)$ for some $1 \le j_0\le k-m$, then
\begin{equation*}
 \lim_{\mathcal{O}\ni(\pmb\omega,\mathbf{A})\to (\pmb\omega_0,\mathbf{A}_0)}\hat{\ld}_{j}(\pmb\omega,\mathbf{A})=+\infty,\;j_0\leq j\leq k-m.
\end{equation*}
\end{itemize}
\end{lemma}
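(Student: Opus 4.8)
The plan is to reduce everything to the structural decomposition furnished by part (1), and then obtain part (2) as a direct application of the small perturbation lemma. For part (1), I would start from the hypothesis $\sharp_1(\sigma(\pmb\omega,\mathbf{A}))\equiv k$ on the connected set $\mathcal{O}$, which by Lemma \ref{continuity criteria} makes all the functions $\lambda_n$, $1\le n\le k$, continuous on $\mathcal{O}$. The key point is that the number of eigenvalues strictly below $c_1$, say $p(\pmb\omega,\mathbf{A})$, is constant. Fixing a base point, one has $\lambda_{p}<c_1<\lambda_{p+1}$ there (using $c_1\notin\sigma$, so no eigenvalue sits on the endpoint), and continuity of the $\lambda_n$ propagates these strict inequalities to a neighborhood; hence $p$ is locally constant, and connectedness of $\mathcal{O}$ forces $p\equiv p_0$. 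Since $\sharp_1(\sigma\cap(c_1,c_2))\equiv m$ and $c_2\notin\sigma$, the same counting shows the number above $c_2$ is the constant $q_0=k-m-p_0$. Thus on all of $\mathcal{O}$ the ordered eigenvalues split as
\begin{equation*}
\lambda_1\le\cdots\le\lambda_{p_0}<c_1<\lambda_{p_0+1}\le\cdots\le\lambda_{p_0+m}<c_2<\lambda_{p_0+m+1}\le\cdots\le\lambda_k,
\end{equation*}
so that $\hat\lambda_i=\lambda_i$ for $1\le i\le p_0$ (whence $E_i\subset(-\infty,c_1)$) and $\hat\lambda_i=\lambda_{i+m}$ for $p_0+1\le i\le k-m$ (whence $E_i\subset(c_2,+\infty)$). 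This gives the dichotomy and the continuity claim with $i_0=i$ or $i_0=i+m$.

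For part (2), suppose $E_{i_0}\subset(-\infty,c_1)$, which by the above means $i_0\le p_0$. Since $\hat\lambda_1\le\cdots\le\hat\lambda_{i_0}\le\hat\lambda_{p_0}$, it suffices to drive $\hat\lambda_{p_0}$ to $-\infty$. For any $c<c_1$, note $c,c_1\notin\sigma(\pmb\omega_0,\mathbf{A}_0)$ because $\sigma(\pmb\omega_0,\mathbf{A}_0)\subset(c_1,c_2)$; applying Lemma \ref{small perturbation} to the interval $(c,c_1)$ at $(\pmb\omega_0,\mathbf{A}_0)$, where there are no eigenvalues in $(c,c_1)$, produces a neighborhood on which $\sharp_1(\sigma(\pmb\omega,\mathbf{A})\cap(c,c_1))=0$. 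Intersecting with $\mathcal{O}$ and recalling that there are exactly $p_0$ eigenvalues below $c_1$, all of them must lie below $c$. Letting $c\to-\infty$ yields $\hat\lambda_i(\pmb\omega,\mathbf{A})\to-\infty$ for $1\le i\le p_0$, in particular for $1\le i\le i_0$. The case $E_{j_0}\subset(c_2,+\infty)$, i.e. $j_0\ge p_0+1$, is entirely symmetric: applying Lemma \ref{small perturbation} to $(c_2,c)$ for arbitrary $c>c_2$ forces the $q_0$ eigenvalues above $c_2$ to exceed $c$ near the limit, giving $\hat\lambda_j\to+\infty$ for $p_0+1\le j\le k-m$.

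I do not anticipate a deep obstacle; the argument is essentially an index-bookkeeping exercise resting on two earlier results. The most delicate point is the constancy of $p$ in part (1): one must combine the continuity from Lemma \ref{continuity criteria} with the hypotheses $c_1,c_2\notin\sigma$ (to prevent eigenvalues from crossing the endpoints) and then invoke connectedness of $\mathcal{O}$; care is also needed to handle the boundary indices $p_0=0$ or $m=0$, where some of the displayed blocks are empty and the corresponding part~(2) conclusion is vacuous. A minor technical caveat in part (2) is that $(\pmb\omega_0,\mathbf{A}_0)\notin\mathcal{O}$, so Lemma \ref{small perturbation} is applied at a limit point and the resulting neighborhood is intersected with $\mathcal{O}$, exactly as in the proof of Lemma \ref{infinity-finity}.
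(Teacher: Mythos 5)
Your proof is correct. The paper in fact offers no written proof of this lemma: it only remarks, after Lemma \ref{monotonicity-to-dis}, that the argument of Theorem 2.3 in \cite{Zhu2} (the $1$-dimensional case) is dimension-independent, and your argument is exactly that analysis --- part (1) via the locally constant count of eigenvalues below $c_1$ (continuity from Lemma \ref{continuity criteria}, propagation of the strict inequalities $\lambda_{p_0}<c_1<\lambda_{p_0+1}$, and connectedness of $\mathcal{O}$), and part (2) via Lemma \ref{small perturbation} applied at the limit point $(\pmb\omega_0,\mathbf{A}_0)$ to the interval $(c,c_1)$ (resp. $(c_2,c)$) for an arbitrary cutoff $c$ --- so it correctly supplies the omitted details, including the two points genuinely worth flagging: the empty-block edge cases and the fact that the perturbation neighborhood is taken in the ambient space and then intersected with $\mathcal{O}$.
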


The following result indicates that the monotonicity of $\lambda_n$ in a certain direction determines its asymptotic behavior in this direction.

\begin{lemma}\label{monotonicity-to-dis}
 Let $\mathcal{O}=\{(\pmb\omega,\mathbf{A})_{\nu}\in \Omega_N^\mathbb{C}\times \mathcal{B}^{\C}: \nu\in (\nu_0-\epsilon,\nu_0+\epsilon)\}$, where $(\pmb\omega,\mathbf{A})_{\nu}$ is continuously dependent on  $\nu\in(\nu_0-\epsilon,\nu_0+\epsilon)$ for
some $\epsilon>0$.
Assume that $\sharp_1(\sigma(\pmb\omega,\mathbf{A})_{\nu_0})=m \ge 0$, and for all
$\nu\in (\nu_0-\epsilon,\nu_0+\epsilon)\setminus \{\nu_0\}$, $\sharp_1(\sigma(\pmb\omega,\mathbf{A})_{\nu})=k>m$.
\begin{itemize}
\item [$(1)$] If  $\ld_n(\nu):=\ld_n((\pmb\omega,\mathbf{A})_{\nu})$ is non-increasing on $(\nu_0-\epsilon,\nu_0)$
for all $1\le n\le k$, then
\begin{align*}
\lim_{\nu\to \nu_0^-}\ld_n(\nu)=-\infty, \quad 1\le n\le k-m, \quad\lim_{\nu\to \nu_0^-}\ld_n(\nu)=\ld_{n-(k-m)}(\nu_0), \quad k-m+1\le n\le k.
\end{align*}
\item [$(2)$] If $\ld_n(\nu)$ is non-decreasing on $(\nu_0-\epsilon,\nu_0)$
for all $1\le n\le k$, then
\begin{align*}
  \lim_{\nu\to \nu_0^-}\ld_n(\nu)=\ld_{n}(\nu_0), \quad 1\le n\le m,\quad  \lim_{\nu\to \nu_0^-}\ld_n(\nu)=+\infty, \quad m+1\le n\le k.
\end{align*}
\item [$(3)$] If  $\ld_n(\nu)$ is non-increasing on $(\nu_0,\nu_0+\epsilon)$
for all $1\le n\le k$, then
\begin{align*}
\lim_{\nu\to \nu_0^+}\ld_n(\nu)=\ld_{n}(\nu_0), \quad 1\le n\le m, \quad \lim_{\nu\to \nu_0^+}\ld_n(\nu)=+\infty, \quad m+1\le n\le k.
\end{align*}
\item [$(4)$] If  $\ld_n(\nu)$ is non-decreasing on $(\nu_0,\nu_0+\epsilon)$
for all $1\le n\le k$, then
\begin{align*}
\lim_{\nu\to \nu_0^+}\ld_n(\nu)=-\infty, \quad 1\le n\le k-m,\quad \lim_{\nu\to \nu_0^+}\ld_n(\nu)=\ld_{n-(k-m)}(\nu_0), \quad k-m+1\le n\le k.
\end{align*}
\end{itemize}
\end{lemma}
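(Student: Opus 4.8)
The plan is to prove case (1) in full and to obtain (2)--(4) by the same template, after interchanging the left approach $\nu\to\nu_0^-$ with the right approach $\nu\to\nu_0^+$ and/or interchanging the roles of $+\infty$ and $-\infty$. So fix attention on (1). First I would restrict to the one-sided family $\mathcal{O}^-:=\{(\pmb\omega,\mathbf{A})_{\nu}:\nu\in(\nu_0-\epsilon,\nu_0)\}$, which is connected as the continuous image of an interval and on which $\sharp_1(\sigma(\pmb\omega,\mathbf{A})_{\nu})\equiv k$. By Lemma \ref{continuity criteria} each $\ld_n(\nu)$, $1\le n\le k$, is then a continuous eigenvalue branch on $\mathcal{O}^-$, and since it is moreover non-increasing by hypothesis, the one-sided limit $\lim_{\nu\to\nu_0^-}\ld_n(\nu)$ exists in $[-\infty,+\infty]$ by monotone convergence; the ordering $\ld_1\le\cdots\le\ld_k$ is preserved in the limit.

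Next I would localize the surviving eigenvalues. Choose $c_1<c_2$ with $\sigma(\pmb\omega,\mathbf{A})_{\nu_0}\subset(c_1,c_2)$ and $c_1,c_2\notin\sigma(\pmb\omega,\mathbf{A})_{\nu_0}$; since $\sharp_1(\sigma(\pmb\omega,\mathbf{A})_{\nu_0})=m$, Lemma \ref{small perturbation} lets me shrink $\epsilon$ so that, on the whole of $\mathcal{O}^-$, $\sharp_1(\sigma(\pmb\omega,\mathbf{A})_{\nu}\cap(c_1,c_2))=m$ and $c_1,c_2\notin\sigma(\pmb\omega,\mathbf{A})_{\nu}$. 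The remaining $k-m$ eigenvalues lie outside $[c_1,c_2]$, so Lemma \ref{integer}, applied on $\mathcal{O}^-$ with boundary point $(\pmb\omega,\mathbf{A})_{\nu_0}$, tells me that each such branch is confined entirely to $(-\infty,c_1)$ or to $(c_2,+\infty)$ and, by part (2) of that lemma, diverges to the corresponding infinity as $\nu\to\nu_0^-$. This is where monotonicity becomes decisive: a branch that is non-increasing along the approach $\nu\to\nu_0^-$ cannot tend to $+\infty$, so no escaping branch can sit in $(c_2,+\infty)$. Hence all $k-m$ escaping branches lie in $(-\infty,c_1)$ and tend to $-\infty$; by order preservation these are precisely $\ld_1,\dots,\ld_{k-m}$.

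Finally I would feed this into Lemma \ref{infinity-finity} with $(m_1,m_2,m_3)=(k-m,m,0)$: the hypothesis (\ref{asymptotic-infty}) is exactly what was just shown, (\ref{asymptotic+infty}) is vacuous, and the conclusion gives $\lim_{\nu\to\nu_0^-}\ld_n(\nu)=\ld_{n-(k-m)}(\nu_0)$ for $k-m+1\le n\le k$, completing (1). Case (2) is identical with $(m_1,m_2,m_3)=(0,m,k-m)$, the non-decreasing hypothesis now forbidding limits $-\infty$ so that the top $k-m$ branches escape to $+\infty$; cases (3) and (4) repeat these two arguments verbatim on the right-sided family, noting that along $\nu\to\nu_0^+$ a branch non-increasing (resp.\ non-decreasing) in $\nu$ is increasing (resp.\ decreasing) in value, which flips the admissible infinity accordingly. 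I expect the only real subtlety to be the index bookkeeping --- confirming that the escaping branches are exactly the bottom (or top) $k-m$ of the global ordering and that the survivors realign as $\ld_{n-(k-m)}(\nu_0)$ --- but this is handled cleanly once the sign of each escaping limit is pinned down by monotonicity and the convergence is delegated to Lemma \ref{infinity-finity}.
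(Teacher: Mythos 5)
Your proof is correct and is essentially the argument the paper intends: the paper gives no proof of this lemma at all, deferring to Lemma 2.7 of \cite{Zhu2} with the remark that the analysis is dimension-independent, and your write-up supplies exactly that argument from the paper's own tools (Lemmas \ref{small perturbation}, \ref{continuity criteria}, \ref{integer} and \ref{infinity-finity}) applied to the one-sided family $\mathcal{O}^{-}$. You also handle the one delicate point properly: the divergence of the escaping branches is obtained from Lemma \ref{integer}(2) (so the hypotheses of Lemma \ref{infinity-finity} hold as limits over the set $\mathcal{O}^{-}$, not merely along the parametrization), with monotonicity used only to rule out the wrong infinity.
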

 Note that the analyses in the proofs of Lemmas \ref{continuity criteria}, \ref{integer} and \ref{monotonicity-to-dis} are independent of the dimension of the Sturm-Liouville problem (\ref{Sturm-Liouville equation})--(\ref{boundary condition }). Thus they are indeed  a straightforward generalization of  Theorems 2.1, 2.3 and Lemma 2.7 in \cite{Zhu2}.

\section{Singularity of the $n$-th eigenvalue of discrete Sturm-Liouville problems}

In this section, we
completely characterize  singularity of the $n$-th eigenvalue on the boundary conditions for a fixed equation.
Then we characterize  singularity of the $n$-th eigenvalue on the equations for a fixed boundary condition under a non-degenerate assumption.

\subsection{Singularity  of the $n$-th eigenvalue on the boundary conditions}
Fix a Sturm-Liouville equation $\pmb\omega=(P^{-1},Q, W)$ such that $P_0^{-1}\in\mathcal{P}_d(\mathbb{C})$ in this subsection.
Let $K\subseteq \{1,2,\cdots,2d\}$.
For any boundary condition $\A=[A\;|\;B]\in \mathcal{O}_K^{\C}$, it follows from (\ref{co-sys}) that there exists
$S\in \mathcal{H}_{2d}(\C)$ such that $ \A=[(S\,|\, I_{2d})E_{K}]$. Let us  write $S=S(\mathbf{A})$ in the
partitioned form:
\begin{equation*}
  S(\mathbf{A})=\left(
      \begin{array}{cc}
        S_1 & S_2 \\
        S_2^* & S_3 \\
      \end{array}
    \right),
\end{equation*}
where $S_1, S_3\in \mathcal{H}_d(\C)$ and $S_2\in \mathcal{M}_d(\C)$.
Then it follows that
\begin{align*}
 (A,\,B) &=(S,\,I_{2d})E_{K}=\left(
                                \begin{array}{cccc}
                                  S_1 & S_2 & I_d & 0 \\
                                  S_2^*& S_3 & 0 & I_d \\
                                \end{array}
                              \right)\left(
          \begin{array}{cccccccc}
            E_{1} & 0 & I_d-E_{1} & 0 \\
            0 &  E_{2}& 0 &  I_d-E_{2}  \\
            E_{1}-I_d & 0 &  E_{1} & 0  \\
            0 &  E_{2}-I_d & 0 &  E_{2} \\
          \end{array}
        \right)\\
  &=\left(
      \begin{array}{cccc}
        S_1E_1+E_1 -I_d&S_2E_2 & S_1(I_d-E_1)+E_1 & S_2(I_d-E_2) \\
        S_2^*E_1 &S_3E_2+E_2-I_d & S_2^*(I_d-E_1) & S_3(I_d-E_2)+E_2 \\
      \end{array}
    \right).
\end{align*}
Recall that  $A_j, B_j\in\mathcal{M}_{2d\times d}$, $j=1,2$, are defined in (\ref{A1A2B1B2}).
  Then we have
\begin{align}
  &(A_1P_0^{-1}+B_1,\,B_2)\nonumber\\\label{matrix-trans-number-eigenvalue}
  =&\left(
                          \begin{array}{cc}
                            S_1(E_1P_0^{-1}+I_d-E_1)+(E_1-I_d)P_0^{-1} +E_1& S_2(I_d-E_2) \\
                           S_2^*(E_1P_0^{-1}+I_d-E_1) & S_3(I_d-E_2)+E_2 \\
                          \end{array}
                        \right).
\end{align}
From the structure of $E_1$ and the fact that $P_0^{-1}\in\mathcal{P}_{d}(\mathbb{C})$, we infer that
 $E_1P_0^{-1}+I_d-E_1$ is invertible.
Then it follows that
\begin{align}\label{matrix-operation}
& (A_1P_0^{-1}+B_1,B_2)\left(
                         \begin{array}{cc}
                           (E_1P_0^{-1}+I_d-E_1)^{-1} & 0 \\
                           0 & I_d \\
                         \end{array}
                       \right)\\\nonumber
  =&\left(
                          \begin{array}{cc}
                            S_1+[(E_1-I_d)P_0^{-1} +E_1](E_1P_0^{-1}+I_d-E_1)^{-1} & S_2(I_d-E_2) \\
                           S_2^* & S_3(I_d-E_2)+E_2 \\
                          \end{array}
                        \right).
\end{align}
Recall that $K_2$ is defined in (\ref{K1K2-definition}), and $e_i$  is  the $i$-th column of    $I_d$.
For convenience, we set $r=\sharp (K_2)$. Let us write $K_2=\{k_1+d,k_2+d,\cdots,k_r+d\}$
with $1\le k_1<k_2<\cdots <k_r\le d$,
and
\begin{align}\label{E0}
E_0=(e_{k_1},\cdots, e_{k_r}),
\end{align}
if $K_2\neq\emptyset$. For any  $\mathbf{A}\in\mathcal{B}^\mathbb{C}$,
we define
\begin{align}
 D(\mathbf{A})=(A_1P_0^{-1}+B_1,B_2).
\end{align}
For any  $\mathbf{A}\in\mathcal{O}_K^\mathbb{C}$, we define
\begin{align}\label{Hermite}
S^D_K(\mathbf{A})&=\left(
                          \begin{array}{cc}
                            S_1+[(E_1-I_d)P_0^{-1} +E_1](E_1P_0^{-1}+I_d-E_1)^{-1} & S_2E_0 \\
                           E_0^*S_2^* & E_0^*S_3E_0 \\
                          \end{array}
                        \right){\rm\;\;if \;\;}K_2\neq\emptyset,\\\label{Hermite2}
                        S^D_K(\mathbf{A})&= S_1+[(E_1-I_d)P_0^{-1} +E_1](E_1P_0^{-1}+I_d-E_1)^{-1}{\rm\;\;if\;\;}K_2=\emptyset.
\end{align}
Then the following result holds.
\begin{lemma} Let  $\mathbf{A}\in\mathcal{O}_K^\mathbb{C}$. Then
\begin{align}\label{rank}
 {\rm rank}\, D(\mathbf{A})= {\rm rank}\,S^D_K(\mathbf{A})+d-r,
\end{align}
and $S^D_K(\mathbf{A})\in\mathcal{H}_{d+r}(\mathbb{C})$.
\end{lemma}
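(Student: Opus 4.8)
The goal is to show that $\operatorname{rank} D(\mathbf{A}) = \operatorname{rank} S^D_K(\mathbf{A}) + d - r$ and that $S^D_K(\mathbf{A})$ is Hermitian of size $d+r$. The starting point is the explicit factorization already computed in equation (\ref{matrix-operation}): right-multiplying $D(\mathbf{A}) = (A_1 P_0^{-1} + B_1, B_2)$ by the invertible block matrix $\operatorname{diag}((E_1 P_0^{-1} + I_d - E_1)^{-1}, I_d)$ preserves rank. Thus the plan is to work with the resulting matrix
\[
M := \left(\begin{array}{cc} S_1 + [(E_1 - I_d)P_0^{-1} + E_1](E_1 P_0^{-1} + I_d - E_1)^{-1} & S_2(I_d - E_2) \\ S_2^* & S_3(I_d - E_2) + E_2 \end{array}\right),
\]
and to extract $S^D_K(\mathbf{A})$ from $M$ by elementary column and row operations that do not change the rank, while accounting for the discrepancy $d - r$.

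**The main structural observation.** Everything hinges on the diagonal projection structure of $E_2 = \operatorname{diag}(\beta_1, \dots, \beta_d)$. By (\ref{E1E2}), $\beta_i = 0$ exactly when $d+i \in K_2$, i.e. when $i \in \{k_1, \dots, k_r\}$, and $\beta_i = e_i$ otherwise. Hence $I_d - E_2$ is the diagonal projection onto the coordinates indexed by $\{k_1, \dots, k_r\}$, and in fact $I_d - E_2 = E_0 E_0^*$ where $E_0 = (e_{k_1}, \dots, e_{k_r})$ from (\ref{E0}). The first step I would carry out is to rewrite the off-diagonal block $S_2(I_d - E_2) = S_2 E_0 E_0^*$ and similarly $S_3(I_d - E_2) = S_3 E_0 E_0^*$, while noting $E_2 = I_d - E_0 E_0^*$ is a projection complementary to $E_0 E_0^*$. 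The crucial point is that the last $d$ columns of $M$ split cleanly: the $r$ columns indexed by $k_1, \dots, k_r$ carry the $E_0$-data, while the remaining $d - r$ columns of the bottom-right block reduce to standard basis vectors coming from $E_2$.

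**Reducing to $S^D_K(\mathbf{A})$.** The reduction proceeds by column operations. In the bottom-right block $S_3(I_d - E_2) + E_2$, for each index $i \notin \{k_1,\dots,k_r\}$ the $i$-th column equals $e_i$ (since the $E_2$ term contributes $e_i$ and the $S_3(I_d - E_2)$ term vanishes on that column). These $d - r$ columns are unit vectors $e_i$ sitting in the bottom block; I would use them to clear out all other entries in their respective rows and columns via elementary operations, each such column contributing exactly $1$ to the rank and removing one row and one column from the remaining matrix. After deleting these $d - r$ pivot columns and the corresponding rows, the surviving block is precisely $S^D_K(\mathbf{A})$ as defined in (\ref{Hermite}), with the $S_2 E_0$ and $E_0^* S_3 E_0$ blocks emerging naturally from restricting to the $\{k_1,\dots,k_r\}$ coordinates via $E_0$. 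This yields $\operatorname{rank} M = \operatorname{rank} S^D_K(\mathbf{A}) + (d - r)$, giving (\ref{rank}). The case $K_2 = \emptyset$ (so $r = 0$) is handled identically with (\ref{Hermite2}), where the entire bottom-right block consists of unit vectors and is cleared completely.

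**Hermiticity and the anticipated obstacle.** Checking $S^D_K(\mathbf{A}) \in \mathcal{H}_{d+r}(\mathbb{C})$ is then a direct verification: the bottom-right block $E_0^* S_3 E_0$ is Hermitian because $S_3 \in \mathcal{H}_d(\mathbb{C})$; the off-diagonal blocks $S_2 E_0$ and $E_0^* S_2^*$ are conjugate transposes of each other; and the top-left block requires showing that $[(E_1 - I_d)P_0^{-1} + E_1](E_1 P_0^{-1} + I_d - E_1)^{-1}$ is Hermitian, after which adding the Hermitian $S_1$ preserves this. I expect the main obstacle to be precisely this last Hermiticity check on the top-left block, together with keeping careful track of which coordinates survive the column reduction. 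Because $E_1$ and $P_0^{-1}$ do not commute in general, one must exploit the diagonal-projection structure of $E_1 = \operatorname{diag}(\alpha_1, \dots, \alpha_d)$ and the positive-definiteness (hence Hermiticity) of $P_0^{-1}$; I would verify the Hermiticity by a block decomposition of $P_0^{-1}$ adapted to the index partition $K_1$ versus its complement, or equivalently by confirming that the expression is invariant under conjugate-transposition using $E_1^* = E_1$ and $(P_0^{-1})^* = P_0^{-1}$. This bookkeeping, rather than any deep idea, is where the real care is needed.
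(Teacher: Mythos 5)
Your proposal is correct and takes essentially the same route as the paper: the rank identity comes from the invertible column transformation (\ref{matrix-operation}) together with the fact that $E_2$ contributes $d-r$ independent unit columns (your explicit column-operation bookkeeping is in fact more detailed than the paper's one-line justification via ${\rm rank}(E_2)=d-r$), and Hermiticity is reduced to the single top-left block $[(E_1-I_d)P_0^{-1}+E_1](E_1P_0^{-1}+I_d-E_1)^{-1}$. For that last step the paper implements the second of your two suggested strategies: it verifies by direct expansion, using the idempotency $E_1^2=E_1$, the identity $(P_0^{-1}E_1+I_d-E_1)[(E_1-I_d)P_0^{-1}+E_1]=[P_0^{-1}(E_1-I_d)+E_1](E_1P_0^{-1}+I_d-E_1)$, and then inverts the outer (invertible) factors to conclude that the block equals its own conjugate transpose; your block-decomposition alternative, permuting coordinates so that $E_1={\rm diag}(I_m,0)$ and computing the blocks of the product explicitly, works just as well and rests on the same structural facts.
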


\begin{proof}
By (\ref{E1E2}), ${\rm rank} (E_2)=d-r$ and thus  (\ref{rank}) holds.
To prove $S^D_K(\mathbf{A})\in\mathcal{H}_{d+r}(\mathbb{C})$, it suffices to show that
\begin{align}\label{E1idp0hd}
[(E_1-I_d)P_0^{-1} +E_1](E_1P_0^{-1}+I_d-E_1)^{-1}\in\mathcal{H}_{d}(\mathbb{C}).
\end{align}
Direct computation gives
\begin{align*}
 (P_0^{-1}E_1+I_d-E_1) [(E_1-I_d)P_0^{-1} +E_1]=[P_0^{-1}(E_1-I_d) +E_1](E_1P_0^{-1}+I_d-E_1).
\end{align*}
Since  $(P_0^{-1}E_1+I_d-E_1)$ and $ (E_1P_0^{-1}+I_d-E_1)$ are invertible, we have
\begin{align*}
  [(E_1-I_d)P_0^{-1} +E_1](E_1P_0^{-1}+I_d-E_1)^{-1}=(P_0^{-1}E_1+I_d-E_1)^{-1}[P_0^{-1}(E_1-I_d) +E_1],
\end{align*}
which yields (\ref{E1idp0hd}).
\end{proof}
Define
\begin{align}\label{layer}
\mathcal{B}_{k}:=&\{\mathbf{A}\in\mathcal{B}^\mathbb{C}\mid r^0( D(\mathbf{A}))=k\}, \; 0\leq k\leq 2d, \\\label{area division}
\mathcal{B}_K^{(r^0,r^+,r^-)} :=&\{\mathbf{A}\in\mathcal{O}_K^\mathbb{C}\mid r^0=r^0(S^D_K(\mathbf{A})), \,r^\pm=r^\pm(S^D_K(\mathbf{A}))\}
\end{align}
for nonnegative integers $r^0, r^\pm$ with $ r^0+ r^-+ r^+=d+r$. (\ref{layer}) gives the $2d+1$ layers in $\mathcal{B}^\mathbb{C}$, while
(\ref{area division}) divides $\mathcal{O}_K^\mathbb{C}$ into different areas.
Theorem \ref{main result discrete case} below indicates that the $n$-th eigenvalue exhibits the same singularity in any given area.
By Lemma \ref{number of eigenvalues}, we have the following result.

 \begin{lemma}\label{number eigenvalue Bk}
 $\sharp_1(\sigma(\pmb\omega,\mathbf{A}))=Nd-k$ for  $\mathbf{A}\in \mathcal{B}_k$,  and
$\sharp_1(\sigma(\pmb\omega,\mathbf{A}))= Nd-r^0$ for  $\mathbf{A}\in \mathcal{B}_{K}^{(r^0,r^+,r^-)}$.
\end{lemma}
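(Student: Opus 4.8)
The plan is to read off the eigenvalue count directly from Lemma~\ref{number of eigenvalues} and then translate the ranks involved into the signature data recorded in the definitions (\ref{layer})--(\ref{area division}). By Lemma~\ref{number of eigenvalues} together with the definition of $D(\mathbf{A})=(A_1P_0^{-1}+B_1,\,B_2)$, one has $\sharp_1(\sigma(\pmb\omega,\mathbf{A}))=(N-2)d+{\rm rank}\,D(\mathbf{A})$ for every $\mathbf{A}\in\mathcal{B}^{\mathbb{C}}$, so the entire statement reduces to computing ${\rm rank}\,D(\mathbf{A})$ on each of the two families of sets.

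For the first assertion I would use that $D(\mathbf{A})$ is a square $2d\times 2d$ matrix, since $A_1P_0^{-1}+B_1$ and $B_2$ are each $2d\times d$. The rank--nullity theorem then gives ${\rm rank}\,D(\mathbf{A})=2d-r^0(D(\mathbf{A}))$, where $r^0(D(\mathbf{A}))$ denotes the dimension of the kernel. For $\mathbf{A}\in\mathcal{B}_k$ we have $r^0(D(\mathbf{A}))=k$ by (\ref{layer}), whence ${\rm rank}\,D(\mathbf{A})=2d-k$ and $\sharp_1(\sigma(\pmb\omega,\mathbf{A}))=(N-2)d+2d-k=Nd-k$.

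For the second assertion I would invoke the preceding rank identity (\ref{rank}), namely ${\rm rank}\,D(\mathbf{A})={\rm rank}\,S^D_K(\mathbf{A})+d-r$, together with the fact established there that $S^D_K(\mathbf{A})\in\mathcal{H}_{d+r}(\mathbb{C})$. Since a Hermitian matrix of order $d+r$ satisfies $r^0+r^-+r^+=d+r$, its rank equals $r^-+r^+=(d+r)-r^0$. Substituting yields ${\rm rank}\,D(\mathbf{A})=(d+r)-r^0+d-r=2d-r^0$, so for $\mathbf{A}\in\mathcal{B}_K^{(r^0,r^+,r^-)}$ one obtains $\sharp_1(\sigma(\pmb\omega,\mathbf{A}))=(N-2)d+2d-r^0=Nd-r^0$.

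This is essentially a bookkeeping step: all the genuine content sits in Lemma~\ref{number of eigenvalues} and in the rank identity (\ref{rank}) with the Hermitian form of $S^D_K(\mathbf{A})$. The only point requiring care is the interpretation of the symbol $r^0(D(\mathbf{A}))$: because $D(\mathbf{A})$ need not be Hermitian, $r^0$ must here be read as the nullity $2d-{\rm rank}\,D(\mathbf{A})$, which coincides with the zero-eigenvalue multiplicity on the Hermitian matrices appearing elsewhere. Once that convention is fixed, no real obstacle remains.
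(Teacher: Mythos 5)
Your proposal is correct and takes essentially the same route as the paper: the paper's entire proof consists of citing Lemma \ref{number of eigenvalues} together with the rank identity (\ref{rank}), and your argument simply spells out that bookkeeping. Your observation that $r^0(D(\mathbf{A}))$ in (\ref{layer}) must be read as the nullity $2d-\mathrm{rank}\,D(\mathbf{A})$ (since $D(\mathbf{A})$ need not be Hermitian, while the notation $r^0$ is defined only for Hermitian matrices) is the intended reading and is precisely what makes the layer definition consistent with the area definition (\ref{area division}).
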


\begin{lemma}\label{neighborhood}
Let  $\mathbf{A}\in \mathcal{B}_{K}^{(r_1^0,r_1^+,r_1^-)}$. Then
\begin{align*}
\mathcal{U}_\varepsilon^{(r^0,r^+,r^-)}:=\mathcal{U}_\varepsilon\cap \mathcal{B}_{K}^{(r^0,r^+,r^-)}
\end{align*}
with $\mathcal{U}_\varepsilon=\{\mathbf{B}\in \mathcal{O}_K^\mathbb{C}:\| S(\mathbf{B})- S(\mathbf{A})\|_{\mathcal{M}_{2d}}<\varepsilon\}$
is path connected for any $r^0\leq r_1^0$, $r^\pm\geq r_1^\pm$ satisfying $r^0+r^++r^-=d+r$, and $\varepsilon>0$ sufficiently small.
\end{lemma}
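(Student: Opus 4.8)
The plan is to work entirely inside the chart $\mathcal O_K^\C$, where a boundary condition $\mathbf B$ is parametrized by $S(\mathbf B)\in\mathcal H_{2d}(\C)$, and to analyze the affine map $\pi\colon\mathcal H_{2d}(\C)\to\mathcal H_{d+r}(\C)$, $\pi(S(\mathbf B))=S^D_K(\mathbf B)$, given by (\ref{Hermite})--(\ref{Hermite2}). From the explicit block form, the three assignments $S_1\mapsto S_1+C_K$ (with $C_K:=[(E_1-I_d)P_0^{-1}+E_1](E_1P_0^{-1}+I_d-E_1)^{-1}$), $S_2\mapsto S_2E_0$ and $S_3\mapsto E_0^*S_3E_0$ are each surjective, so $\pi$ is a surjective affine map whose linear part $d\pi$ is onto $\mathcal H_{d+r}(\C)$. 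Write $T_{\mathbf A}:=S^D_K(\mathbf A)$, so $r^0(T_{\mathbf A})=r_1^0$ and $r^\pm(T_{\mathbf A})=r_1^\pm$, let $V_0=\ker T_{\mathbf A}$ (of dimension $r_1^0$) and $P_0$ its orthogonal projection.

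First I would reduce the problem to the near-zero spectral block. Since $S^D_K$ is affine, hence continuous, in $S$, for $\varepsilon$ small the spectrum of $T(\mathbf B):=S^D_K(\mathbf B)$ splits into three clusters (near the positive eigenvalues of $T_{\mathbf A}$, near the negative ones, and near $0$); let $\widetilde T(\mathbf B)$ be the compression of $T(\mathbf B)$ to the Riesz spectral subspace of the near-zero cluster, which is $\cong V_0$. Standard perturbation theory gives that $\widetilde T$ is smooth near $\mathbf A$ with $\widetilde T(\mathbf A)=0$ and
\begin{align*}
r^+(T(\mathbf B))=r_1^++r^+(\widetilde T(\mathbf B)),\quad
r^-(T(\mathbf B))=r_1^-+r^-(\widetilde T(\mathbf B)),\quad
r^0(T(\mathbf B))=r^0(\widetilde T(\mathbf B)).
\end{align*}
Hence $\mathbf B\in\mathcal B_K^{(r^0,r^+,r^-)}$ iff $\widetilde T(\mathbf B)$ has signature $(\rho^0,\rho^+,\rho^-):=(r^0,\,r^+-r_1^+,\,r^--r_1^-)$; the hypotheses $r^0\le r_1^0$, $r^\pm\ge r_1^\pm$ say exactly that $\rho^0,\rho^\pm\ge0$ with $\rho^0+\rho^++\rho^-=r_1^0$, i.e.\ that this signature class in $\mathcal H_{r_1^0}(\C)$ is nonempty. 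Differentiating the compression, $d\widetilde T|_{\mathbf A}(H)=P_0\,(d\pi\,H)\,P_0|_{V_0}$, which is onto $\mathcal H_{r_1^0}(\C)$ because $d\pi$ is onto $\mathcal H_{d+r}(\C)$ and restriction of a Hermitian form to $V_0$ is onto; thus $\mathbf B\mapsto\widetilde T(\mathbf B)$ is a submersion at $\mathbf A$.

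Second I would transport the question to $\mathcal H_{r_1^0}(\C)$. Putting $\Sigma':=\{M\in\mathcal H_{r_1^0}(\C):(r^0(M),r^+(M),r^-(M))=(\rho^0,\rho^+,\rho^-)\}$, we have $\mathcal U_\varepsilon^{(r^0,r^+,r^-)}=\mathcal U_\varepsilon\cap\widetilde T^{-1}(\Sigma')$. By Sylvester's law of inertia $\Sigma'$ is a single congruence orbit under $\mathrm{GL}_{r_1^0}(\C)$, hence path connected since $\mathrm{GL}_{r_1^0}(\C)$ is connected; moreover $\Sigma'$ is invariant under multiplication by positive reals, so it is a cone with apex $0$, and therefore $\Sigma'\cap B_\delta(0)$ is path connected for every ball $B_\delta(0)$ (join any two points to small radial multiples of themselves inside the ball, then connect those by a path in $\Sigma'$ scaled down to fit in $B_\delta(0)$). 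By the submersion normal form, near $\mathbf A$ the map $\widetilde T$ is, in suitable coordinates, a linear projection with connected fibers, which turns $\mathcal U_\varepsilon^{(r^0,r^+,r^-)}$ into the preimage of $\Sigma'\cap(\text{a neighborhood of }0)$.

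The remaining point, and the main obstacle, is the compatibility of the round ball $\mathcal U_\varepsilon=\{\|S(\mathbf B)-S(\mathbf A)\|<\varepsilon\}$ with this reduction, since the normal form is only local and $\widetilde T(\mathcal U_\varepsilon)$ need not be star-shaped about $0$. I would resolve this by the rescaling $S(\mathbf B)=S(\mathbf A)+\varepsilon R$, $\|R\|<1$: as signature is scale-invariant, $\mathbf B\in\mathcal U_\varepsilon^{(r^0,r^+,r^-)}$ iff $G_\varepsilon(R):=\varepsilon^{-1}\widetilde T(S(\mathbf A)+\varepsilon R)\in\Sigma'$, and $G_\varepsilon\to L:=d\widetilde T|_{\mathbf A}$ in $C^1$ on the unit ball as $\varepsilon\to0$. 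For the linear model, $L^{-1}(\Sigma')\cap B_1$ is path connected: $L$ is a linear surjection, so $L^{-1}(\Sigma')\cong\Sigma'\times\ker L$ is path connected and is again a cone, whence the cone-in-a-ball argument applies. Because $G_\varepsilon$ is a $C^1$-small submersive perturbation of $L$, this product/cone structure persists and $\{R\in B_1:G_\varepsilon(R)\in\Sigma'\}$ stays path connected for $\varepsilon$ small; pulling back gives the claim. The delicate step is exactly this persistence under perturbation together with the radial retraction toward the origin, for which the spectral reduction to $\widetilde T$ — which recenters the stratum at $0$ — is essential.
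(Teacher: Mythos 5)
Your reductions are all sound: the map $S\mapsto S^D_K$ from (\ref{Hermite})--(\ref{Hermite2}) is indeed affine with surjective linear part, the spectral compression $\widetilde T$ to the near-zero cluster correctly converts the inertia conditions $r^0\le r_1^0$, $r^\pm\ge r_1^\pm$ into a signature condition $\widetilde T(\mathbf{B})\in\Sigma'\subset\mathcal{H}_{r_1^0}(\C)$, the submersion property holds, and your cone-in-a-ball argument for $\Sigma'\cap B_\delta(0)$ is correct. (For reference, the paper itself gives no details here; it only cites Lemma 7.2 of \cite{Hu-Liu-Wu-Zhu2018}, whose proof is a direct matrix-path construction, so your framework is in any case a different route.) The problem is that your final step --- the assertion that ``because $G_\varepsilon$ is a $C^1$-small submersive perturbation of $L$, this product/cone structure persists and $\{R\in B_1:G_\varepsilon(R)\in\Sigma'\}$ stays path connected'' --- is exactly the content of the lemma, and you assert it rather than prove it. You even flag it yourself as ``the delicate step.'' This is a genuine gap, because no such persistence principle can be cited: path connectedness of a preimage intersected with a fixed ball is \emph{not} stable under small perturbations of the map in general. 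The danger is concentrated where the stratum meets the boundary sphere $\partial B_1$: a perturbation that is small in norm can detach a component of $G_\varepsilon^{-1}(\Sigma')\cap B_1$ near $\partial B_1$ from the rest, and nothing in your write-up rules this out.

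The gap is fillable, but only by exploiting two pieces of structure you mention and never actually use: $G_\varepsilon(0)=\varepsilon^{-1}\widetilde T(\mathbf{A})=0$ (the recentering) and closeness in $C^1$, not merely $C^0$. A complete argument runs as follows. Factor $G_\varepsilon=L\circ\Phi_\varepsilon$ with
$\Phi_\varepsilon:=P_{\ker L}+\bigl(L|_{(\ker L)^\perp}\bigr)^{-1}\circ G_\varepsilon$;
then $\Phi_\varepsilon(0)=0$, $\|D\Phi_\varepsilon-I\|$ is uniformly small, so $\Phi_\varepsilon$ is a diffeomorphism of $B_1$ onto its image and
$G_\varepsilon^{-1}(\Sigma')\cap B_1=\Phi_\varepsilon^{-1}\bigl(L^{-1}(\Sigma')\cap\Phi_\varepsilon(B_1)\bigr)$,
where $\mathcal{C}:=L^{-1}(\Sigma')$ is a path-connected cone. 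One must then prove that $\Phi_\varepsilon(B_1)$ is \emph{star-shaped about the origin}; this follows because the radial projection $R\mapsto\Phi_\varepsilon(R)/\|\Phi_\varepsilon(R)\|$ of $\Phi_\varepsilon(\partial B_1)$ onto the unit sphere is $C^1$-close to the identity, hence a diffeomorphism, so every ray from $0$ meets $\Phi_\varepsilon(\partial B_1)$ exactly once. Only with star-shapedness in hand can you radially retract any point of $\mathcal{C}\cap\Phi_\varepsilon(B_1)$ into $\mathcal{C}\cap B_{1-C\eta}$ (staying in $\mathcal{C}$ by the cone property and in $\Phi_\varepsilon(B_1)$ by star-shapedness), and then connect inside $\mathcal{C}\cap B_{1-C\eta}$ by your scaling argument. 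Without this step, the radial retraction you invoke is unavailable: the pulled-back radial path $t\mapsto\Phi_\varepsilon^{-1}\bigl(t\,\Phi_\varepsilon(R)\bigr)$ need not remain in $\Phi_\varepsilon(B_1)$ for $t$ near $1$, which is precisely the boundary effect that the naive ``persistence'' claim ignores. So: right skeleton, correct identification of the obstacle, but the decisive geometric fact that overcomes the obstacle is missing from the proof.
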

\begin{proof}
The proof is similar as Lemma 7.2 in \cite{Hu-Liu-Wu-Zhu2018}.
\end{proof}

We are now in a position to give the complete characterization of singularity of the $n$-th eigenvalue on the boundary conditions.

\begin{theorem}\label{main result discrete case} Fix $\pmb\omega\in\Omega_N^\mathbb{C}$.

(1) Let $0\leq k\leq 2d$. Then the restriction of $\lambda_n$ to $\mathcal{B}_k$ is continuous for any $1\leq n\leq Nd-k$.

(2) Consider the restriction of  $\lambda_n$ to $\mathcal{O}_K^\mathbb{C}$, where $K\subset \{1,\cdots,2d\}$.
Let $0\le r^0< r^0_1\leq d+r$ and $ r^\pm\geq r_1^\pm$.
Then  for any
 $\mathbf{A}\in \mathcal{B}_{K}^{(r_1^0,r_1^+,r_1^-)}$, we have
\begin{align}\label{both bigger1}
   & \lim_{\mathcal{B}_K^{(r^0,r^+,r^-)}\ni \mathbf{B}\to \mathbf{A}}\ld_{n}(\mathbf{B})=-\infty, \quad 1\le n\le r^+-r_1^+, \\
   \label{both bigger2}
    & \lim_{\mathcal{B}_K^{(r^0,r^+,r^-)}\ni\mathbf{B}\to\mathbf{A}}\ld_{n}(\mathbf{B})
    =\ld_{n-(r^+-r^+_1)}(\mathbf{A}),\quad r^+-r_1^+< n\le Nd-r^0-( r^--r_1^-),\\\label{both bigger3}
     & \lim_{\mathcal{B}_K^{(r^0,r^+,r^-)}\ni\mathbf{B}\to \mathbf{A}}\ld_{n}(\mathbf{B})=+\infty, \quad Nd-r^0-( r^--r_1^-)< n\le Nd-r^0.
\end{align}
Consequently, the singular set is $\cup_{1\leq k\leq 2d} \mathcal{B}_k$.
\end{theorem}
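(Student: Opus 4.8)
The plan is to treat the two parts separately, using the structural lemmas established in Section 3 together with the Hermitian reduction $S^D_K(\mathbf{A})$ and the connectivity result of Lemma \ref{neighborhood}. For part (1), the key observation is Lemma \ref{number eigenvalue Bk}: on the set $\mathcal{B}_k$ the total number of eigenvalues is constant, namely $\sharp_1(\sigma(\pmb\omega,\mathbf{A}))\equiv Nd-k$. I would first argue that $\mathcal{B}_k$ (or at least each connected component of it) is a connected subset of $\mathcal{B}^{\C}$, and then invoke Lemma \ref{continuity criteria} directly: a constant eigenvalue count on a connected set forces each $\lambda_n$, $1\le n\le Nd-k$, to be continuous there and to coincide locally with a continuous eigenvalue branch. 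If $\mathcal{B}_k$ fails to be connected globally, the argument applies componentwise, which still yields continuity on $\mathcal{B}_k$.

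For part (2), the strategy is to reduce the limiting behavior along $\mathcal{B}_K^{(r^0,r^+,r^-)}\to\mathbf{A}$ to the monotonicity/asymptotics machinery of Lemmas \ref{infinity-finity}, \ref{integer}, and \ref{monotonicity-to-dis}, with the Hermitian matrix $S^D_K$ dictating exactly how many eigenvalues escape to $\pm\infty$. The counting is governed by Lemma \ref{number eigenvalue Bk}: at $\mathbf{A}$ there are $Nd-r_1^0$ eigenvalues, while on the nearby area $\mathcal{B}_K^{(r^0,r^+,r^-)}$ there are $Nd-r^0 > Nd-r_1^0$ of them, so exactly $r_1^0-r^0=(r^+-r_1^+)+(r^--r_1^-)$ extra eigenvalues appear. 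The content of \eqref{both bigger1}--\eqref{both bigger3} is that $r^+-r_1^+$ of these escape to $-\infty$ and $r^--r_1^-$ escape to $+\infty$, with the remaining $Nd-r^0-(r^+-r_1^+)-(r^--r_1^-)=Nd-r_1^0$ converging to the genuine eigenvalues of $\mathbf{A}$. The signs are forced by the derivative formula of Lemma \ref{derivative formula1} together with Corollary \ref{cor-monotonicity-boundary}: moving $S(\mathbf{B})$ in a positive-semidefinite direction raises eigenvalues, and the split between $r^+$ and $r^-$ reflects precisely the positive and negative inertia of $S^D_K$ through the rank identity \eqref{rank}.

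The execution I envisage is to first establish the result along a single path where the $n$-th eigenvalue is monotone, using Corollary \ref{cor-monotonicity-boundary} to select a one-parameter family $(\pmb\omega,\mathbf{A})_\nu$ approaching $\mathbf{A}$ along which $S(\mathbf{B})-S(\mathbf{A})$ has a definite inertia signature; then Lemma \ref{monotonicity-to-dis} converts this monotonicity into the one-sided limits $\pm\infty$ and the index shifts stated in \eqref{both bigger1}--\eqref{both bigger3}. Next I would upgrade from this one directional limit to the full area limit by exploiting the path-connectivity of $\mathcal{U}_\varepsilon^{(r^0,r^+,r^-)}$ from Lemma \ref{neighborhood}, combined with Lemma \ref{infinity-finity} and Lemma \ref{integer} to guarantee that the eigenvalues which do not diverge stabilize to $\ld_{n-(r^+-r_1^+)}(\mathbf{A})$ regardless of the approach direction within the area. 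The main obstacle, and the step that demands the most care, is showing that the inertia counts $r^+-r_1^+$ and $r^--r_1^-$ correctly and uniformly partition the escaping eigenvalues into those going to $-\infty$ versus $+\infty$: this is where the arbitrariness of the dimension $d$ and the heavy involvement of the equation coefficients through the block $[(E_1-I_d)P_0^{-1}+E_1](E_1P_0^{-1}+I_d-E_1)^{-1}$ in \eqref{Hermite}--\eqref{Hermite2} enter, and one must verify that the local coordinate structure of $\mathcal{O}_K^{\C}$ lets the Hermitian matrix $S^D_K$ govern the signed division of eigenvalues independently of which area-preserving path is chosen. The final sentence, that the singular set equals $\cup_{1\le k\le 2d}\mathcal{B}_k$, then follows by combining the continuity on each $\mathcal{B}_0$-type stratum from part (1) with the genuine jumps exhibited in part (2) whenever $r^0$ drops across adjacent strata.
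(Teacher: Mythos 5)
Your part (1) and the skeleton of your part (2) coincide with the paper's proof: continuity on each layer $\mathcal{B}_k$ follows from Lemmas \ref{continuity criteria} and \ref{number eigenvalue Bk}, and for the areas the paper likewise builds a monotone one-parameter family $\mathbf{B}_t$ (perturbing the kernel block of $S^D_K(\mathbf{A})$ after unitary diagonalization), applies Corollary \ref{cor-monotonicity-boundary} and Lemma \ref{monotonicity-to-dis}, and then upgrades the one-directional limit to the whole area via the path-connectivity of $\mathcal{U}_\varepsilon^{(r^0,r^+,r^-)}$ (Lemma \ref{neighborhood}) together with Lemmas \ref{integer} and \ref{infinity-finity}. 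This works, exactly as you describe, in the two \emph{pure} cases $r^+>r_1^+,\,r^-=r_1^-$ and $r^+=r_1^+,\,r^->r_1^-$.

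The genuine gap is the mixed case $r^+>r_1^+$ \emph{and} $r^->r_1^-$, which is precisely the step you flag as ``the main obstacle'' but never resolve, and your proposed mechanism cannot resolve it. First, no path inside $\mathcal{B}_K^{(r^0,r^+,r^-)}$ approaching $\mathbf{A}$ can have positive (or negative) semi-definite increments: if $S(\mathbf{B}_t)-S(\mathbf{A})$ were positive semi-definite, then $S^D_K(\mathbf{B}_t)-S^D_K(\mathbf{A})$ would be too, forcing $r^-(S^D_K(\mathbf{B}_t))\le r_1^-$, contradicting $r^->r_1^-$; so Corollary \ref{cor-monotonicity-boundary} is simply unavailable in the mixed area. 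Second, even granting monotonicity along some path, each item of Lemma \ref{monotonicity-to-dis} produces divergence to only \emph{one} of $\pm\infty$, whereas \eqref{both bigger1} and \eqref{both bigger3} require $r^+-r_1^+$ eigenvalues to escape to $-\infty$ and $r^--r_1^-$ to escape to $+\infty$ in the same limit. The paper closes this gap with a two-stage argument you would need to add: after proving the pure cases, pick $\tilde{\mathbf{B}}_1\in\mathcal{U}_\varepsilon^{(r_1^0-(r^+-r_1^+),r^+,r_1^-)}$ and $\tilde{\mathbf{B}}_2\in\mathcal{U}_\varepsilon^{(r_1^0-(r^--r_1^-),r_1^+,r^-)}$, which by the pure cases centered at $\mathbf{A}$ have, respectively, $r^+-r_1^+$ eigenvalues below $c_1$ and $r^--r_1^-$ eigenvalues above $c_2$; then apply the pure cases \emph{again, now centered at $\tilde{\mathbf{B}}_1$ and $\tilde{\mathbf{B}}_2$} (the mixed signature is a pure increase of $r^-$ relative to $\tilde{\mathbf{B}}_1$, and a pure increase of $r^+$ relative to $\tilde{\mathbf{B}}_2$) to produce $\tilde{\mathbf{B}}_3,\tilde{\mathbf{B}}_4\in\mathcal{U}_\varepsilon^{(r^0,r^+,r^-)}$ whose eigenvalues lie below $c_1$, respectively above $c_2$, in the stated index ranges. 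Path-connectivity and Lemma \ref{integer}(1) then force these two counts to hold simultaneously for \emph{every} $\mathbf{B}$ in the area, whence Lemma \ref{integer}(2) gives \eqref{both bigger1} and \eqref{both bigger3}, and Lemma \ref{infinity-finity} gives \eqref{both bigger2}. Note that this step relies on the inequality $Nd-r^0-(r^--r_1^-)\geq r^+-r_1^+$, i.e.\ \eqref{N2}, which is where the hypothesis $N\geq2$ enters and which your sketch never verifies.
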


\begin{proof}
(1) is a direct consequence of Lemmas \ref{continuity criteria} and \ref{number eigenvalue Bk}.
Now, we prove (2). Note that $\sharp_1(\sigma({\pmb\omega},\mathbf{A}))=Nd-r_1^0$  by Lemma \ref{number eigenvalue Bk}.
Choose  $c_1,c_2\in\mathbb{R}$  such that $\sharp_1(\sigma(\pmb\omega,\mathbf{A})\cap(c_1,c_2))=Nd-r_1^0$.
 By Lemma \ref{small perturbation}, there exists  $\varepsilon>0$  such that for all  $\mathbf{B}\in\mathcal{U}_\varepsilon$ defined in Lemma \ref{neighborhood}, we have
 $\sharp_1(\sigma(\pmb\omega,\mathbf{B})\cap(c_1,c_2))=Nd-r_1^0$
 and $c_1,c_2\notin\sigma(\pmb\omega,\mathbf{B})$.
It follows from Lemma \ref{neighborhood} that $\mathcal{U}_\varepsilon^{(r^0,r^+,r^-)}$  is path connected.
By Lemma \ref{number eigenvalue Bk},  $\sharp_1(\sigma({\pmb\omega},\mathbf{B}))=Nd-r^0$ for  $\mathbf{B}\in\mathcal{U}_\varepsilon^{(r^0,r^+,r^-)}$, and thus
 $\sharp_1\left(\sigma({\pmb\omega},\mathbf{B})\cap((-\infty,c_1)\cup(c_2,+\infty))\right)=r_1^0-r^0$. Let
$\sigma({\pmb\omega},\mathbf{B})\cap((-\infty,c_1)\cup(c_2,+\infty)):=\{\hat \lambda_1(\mathbf{B})\leq\cdots\leq\hat \lambda_{r_1^0-r^0}(\mathbf{B})\}$ for   $\mathbf{B}\in\mathcal{U}_\varepsilon^{(r^0,r^+,r^-)}$.
By   Lemma \ref{integer}  (1), either $\hat \lambda_n(\mathcal{U}_\varepsilon^{(r^0,r^+,r^-)})\subset(-\infty,c_1)$ or $\hat \lambda_n(\mathcal{U}_\varepsilon^{(r^0,r^+,r^-)})\subset(c_2, +\infty)$ for all $1\leq n\leq r_1^0-r^0$.
 Then we divide our proof in two steps.



{\bf Step 1.} We show that
\begin{align}\label{r+bigger1}\lim\limits_{\mathcal{B}_K^{(r^0,r^+,r^-)}\ni\mathbf{B}\rightarrow\mathbf{A}}\lambda_{n}(\mathbf{B})=&-\infty,\;1\leq n\leq r^+-r_1^+,\\\label{r+bigger2}
\lim\limits_{\mathcal{B}_K^{(r^0,r^+,r^-)}\ni\mathbf{B}\rightarrow \mathbf{A}}\lambda_{n}(\mathbf{B})=&\lambda_{n-(r^+-r_1^+)}(\mathbf{A}),\;r^+-r_1^+< n \leq Nd-r^0,\end{align}
for  $r^+>r_1^+, r^-=r_1^-$; and
\begin{align}\label{r-bigger1}\lim\limits_{\mathcal{B}_K^{(r^0,r^+,r^-)}\ni\mathbf{B}\rightarrow \mathbf{A}}\lambda_{n}(\mathbf{B})&=\lambda_{n}(\mathbf{A}),\;1\leq  n \leq (Nd-r^0)-(r^--r_1^-),\\
\label{r-bigger2}
\lim\limits_{\mathcal{B}_K^{(r^0,r^+,r^-)}\ni\mathbf{B}\rightarrow\mathbf{A}}\lambda_{n}(\mathbf{B})&=+\infty,\;(Nd-r^0)-(r^--r_1^-)< n\leq Nd-r^0
\end{align}
for $ r^+=r_1^+, r^->r_1^-$.

Consider  $r^+>r_1^+$ and $r^-=r_1^-$. In this case, $r_1^0-r^0=r^+-r_1^+$.
Note that there exists a unitary matrix $M\in\mathcal{M}_{d+r}$ such that
$$S^D_K(\mathbf{A})=M\begin{pmatrix}M_+&&\\&M_-&\\&&0_{r_1^0}
\end{pmatrix}M^*,$$
where $M_+={\rm diag}\{\mu_1,\cdots,\mu_{r_1^+} \}$ with $\mu_{i}>0$, $1\leq i\leq r_1^+$, and $M_-={\rm diag}\{\nu_1,\cdots,\nu_{r_1^-} \}$ with $\nu_j<0$, $1\leq j\leq r_1^-$. Recall that $S=S(\mathbf{A})$. If $K_2\neq\emptyset$, we define
$\mathbf{B}_t=[S(\mathbf{B}_t)\,|\,I_{2d}]E_K$ with
\begin{align}\label{def-sij-B}
&(s_{ij}(\mathbf{B}_t))_{i,j\in\{1,\cdots,d,k_1+d,\cdots,k_r+d\}}\\\nonumber
=&\begin{pmatrix}S_1&S_2E_0\\ E_0^*S_2^*&E_0^* S_3E_0\end{pmatrix}
+M\begin{pmatrix}0_{r_1^++r_1^-}&&\\&tI_{r^+-r_1^+}&\\&&0_{r^0}
\end{pmatrix}M^*,\end{align}
$t\geq0$ is sufficiently small, and $s_{ij}(\mathbf{B}_t)=s_{ij}$ if $i\in\{d+1,\cdots,2d\}\setminus K_2$ or $j\in\{d+1,\cdots,2d\}\setminus K_2$. If $K_2=\emptyset$, we only modify (\ref{def-sij-B}) as
\begin{align*}
(s_{ij}(\mathbf{B}_t))_{i,j\in\{1,\cdots,d\}}=S_1
+M\begin{pmatrix}0_{r_1^++r_1^-}&&\\&tI_{r^+-r_1^+}&\\&&0_{r^0}
\end{pmatrix}M^*\end{align*}
in the definition of $\mathbf{B}_t$.
Then $\mathbf{B}_0=\mathbf{A}$,
$$S^D_K(\mathbf{B}_t)=M\begin{pmatrix}M_+&&&\\&M_-&&\\&&tI_{r^+-r_1^+}&\\&&&0_{r^0}
\end{pmatrix}M^*,t>0,$$
and thus $r^0(S^D_K(\mathbf{B}_t))=r^0$, $r^\pm(S^D_K(\mathbf{B}_t))=r^\pm$, which gives $\mathbf{B}_t\in\mathcal{U}_\varepsilon^{(r^0,r^+,r^-)}$.
 Moreover,
 $\sharp_1(\sigma({\pmb\omega},\mathbf{B}_t))$ $=Nd-r^0$ for $t>0$,  and  $\sharp_1(\sigma({\pmb\omega},\mathbf{B}_0))=Nd-r_1^0$.
 It follows from Lemma \ref{continuity criteria} that for any fixed $1\leq n\leq Nd-r^0$, $\lambda_n(\mathbf{B}_{\cdot})$ is locally a continuous eigenvalue branch for $t>0$. Since
 $$S^D_K(\mathbf{B}_{t_2})-S^D_K(\mathbf{B}_{t_1})=M\begin{pmatrix}0_{r_1^++r_1^-}&&\\&(t_2-t_1)I_{r^+-r_1^+}&\\&&0_{r^0}
\end{pmatrix}M^*$$
is a positive semi-definite matrix, we get by Corollary \ref{cor-monotonicity-boundary} that
 $\lambda_n(\mathbf{B}_{t_1})\leq \lambda_n(\mathbf{B}_{t_2})$ with $0<t_1<t_2$ for all $1\leq n \leq Nd-r^0$. Hence, by   Lemma \ref{monotonicity-to-dis} (4),
$\lim_{t\rightarrow {0}^+}\lambda_{n}({\mathbf{B}}_t)=-\infty, 1\leq n\leq r^+-r_1^+.$
Therefore,
there exists $t_0>0$ such that  $\mathbf{B}_{t_0}\in\mathcal{U}_\varepsilon^{(r^0,r^+,r^-)}$ and  ${\lambda}_{n}(\mathbf{B}_{t_0})<c_1$, $1\leq n\leq r^+-r_1^+$, which yields that  $\hat{\lambda}_{n}(\mathbf{B}_{t_0})={\lambda}_{n}(\mathbf{B}_{t_0})$. According to Lemma \ref{integer} (1),
 $\hat \lambda_n(\mathcal{U}_\varepsilon^{(r^0,r^+,r^-)})={\lambda}_{n}(\mathcal{U}_\varepsilon^{(r^0,r^+,r^-)})\subset(-\infty,c_1)$, $1\leq n\leq r^+-r_1^+$,
and (\ref{r+bigger1}) holds.
Thanks to Lemma \ref{infinity-finity}, we get (\ref{r+bigger2}).

Consider  $r^+=r_1^+$ and $r^->r_1^-$.
Since (\ref{r-bigger1})--(\ref{r-bigger2}) can be shown in a similar way, we omit the details.

{\bf Step 2.} Show that
(\ref{both bigger1})--(\ref{both bigger3}) hold
for $r^\pm>r_1^\pm$.

In this case, $r_1^0-r^0=(r^+-r_1^+)+(r^--r_1^-)$.
It follows from (\ref{r+bigger1}) and (\ref{r-bigger2}) that
\begin{align*}
\lim\limits_{\mathcal{B}_K^{(r_1^0-(r^+-r_1^+),r^+,r_1^-)}\ni\mathbf{B}\rightarrow\mathbf{A}}\lambda_{n}(\mathbf{B})=&-\infty,\;1\leq n\leq r^+-r_1^+,\\
\lim\limits_{\mathcal{B}_K^{(r_1^0-(r^--r_1^-),r_1^+,r^-)}\ni\mathbf{B}\rightarrow\mathbf{A}}\lambda_{n}(\mathbf{B})=&+\infty,\;Nd-r_1^0< n\leq Nd-r_1^0+(r^--r_1^-).
\end{align*}
This implies that  $\lambda_n(\tilde{\mathbf{B}}_1) \in(-\infty,c_1)$ with $1\leq n\leq r^+-r_1^+$,   and $\lambda_{n}(\tilde{\mathbf{B}}_2)\in(c_2,+\infty)$ with $Nd-r_1^0< n\leq Nd-r_1^0+(r^--r_1^-)$ for any fixed
$\tilde{\mathbf{B}}_1\in \mathcal{U}_\varepsilon^{(r_1^0-(r^+-r_1^+),r^+,r_1^-)}$ and $\tilde{\mathbf{B}}_2\in \mathcal{U}_\varepsilon^{(r_1^0-(r^--r_1^-),r_1^+,r^-)}$.

Note that $(r^0,r^+,r^-)=(r_1^0-(r^+-r_1^+)-(r^--r^-_1),r^+,r_1^-+(r^--r_1^-))$.
Then we infer from (\ref{r-bigger1}) that
\begin{align*}
\lim\limits_{\mathcal{B}_K^{(r^0,r^+,r^-)}\ni\mathbf{B}\rightarrow\tilde{\mathbf{B}}_1}\lambda_{n}(\mathbf{B})=&\lambda_n(\tilde{\mathbf{B}}_1),\;1\leq n\leq Nd-r^0-(r^--r_1^-).
\end{align*}
Since $N\geq 2$, we get that
\begin{align}\label{N2}
Nd-r^0-(r^--r_1^-)\geq r^+-r_1^+.
\end{align}
Therefore, there exists
$\tilde{\mathbf{B}}_3\in \mathcal{U}_\varepsilon^{(r^0,r^+,r^-)}$ such that   $\lambda_n(\tilde{\mathbf{B}}_3) \in(-\infty,c_1)$ with $1\leq n\leq r^+-r_1^+$.

On the other hand, $(r^0,r^+,r^-)=(r_1^0-(r^+-r_1^+)-(r^--r_1^-),r_1^++(r^+-r_1^+),r^-)$. Thus we get by (\ref{r+bigger2}) that
\begin{align*}
\lim\limits_{\mathcal{B}_K^{(r^0,r^+,r^-)}\ni\mathbf{B}\rightarrow\tilde{\mathbf{B}}_2}\lambda_{n}(\mathbf{B})
=&\lambda_{n-(r^+-r_1^+)}(\tilde{\mathbf{B}}_2),\;r^+-r_1^+< n\leq Nd-r^0,
\end{align*}
which, along with (\ref{N2}), yields that  there exists
$\tilde{\mathbf{B}}_4\in \mathcal{U}_\varepsilon^{(r^0,r^+,r^-)}$ such that
 $\lambda_n(\tilde{\mathbf{B}}_4) \in(c_2,+\infty)$ with $Nd-r^0-(r^--r_1^-)<n\leq Nd-r^0$.
Therefore, we have shown
\begin{align}\label{B3B4}
\sharp_1(\sigma(\pmb\omega,\tilde{\mathbf{B}}_3)\cap(-\infty,c_1))=r^+-r_1^+,\;\;
\sharp_1(\sigma(\pmb\omega,\tilde{\mathbf{B}}_4)\cap(c_2,+\infty))=r^--r_1^-.
\end{align}
 Note that $\mathcal{U}_\varepsilon^{(r^0,r^+,r^-)}$  is path connected and $\sharp_1(\sigma(\pmb\omega,{\mathbf{B}})\cap(c_1,c_2))=Nd-r_1^0=Nd-r^0-(r^+-r_1^+)-(r^--r_1^-)$ for all $\mathbf{B}\in \mathcal{U}_\varepsilon^{(r^0,r^+,r^-)}$. Thus we infer from (\ref{B3B4}) and Lemma \ref{integer} (1) that $\lambda_n(\mathcal{U}_\varepsilon^{(r^0,r^+,r^-)})\subset (-\infty,c_1)$ for all $1\leq n\leq r^+-r_1^+$, and
$\lambda_{n}(\mathcal{U}_\varepsilon^{(r^0,r^+,r^-)})\subset(c_2,+\infty) $ for all  $Nd-r^0-(r^--r_1^-)<n\leq Nd-r^0$.
Then it follows from Lemma \ref{integer} (2) that   (\ref{both bigger1}) and (\ref{both bigger3}) hold. This, along with Lemma \ref{infinity-finity}, implies that
 (\ref{both bigger2}) holds.
This  completes the proof.
\end{proof}

\subsection{Singularity of the $n$-th eigenvalue on the Sturm-Liouville equations} Fix a boundary condition $\mathbf{A}=[(A_1,\;A_2)|(B_1,\;B_2)]=[S\;|\;I_{2d}]E_K\in\mathcal{O}_K^\mathbb{C}$.
In this subsection, we always assume that one of the following non-degenerate conditions holds:
\begin{align}\label{non-degenerate assumption1}
& R \;\;{\rm and}\;\;  E_0^*(S_2^*E_1R^{-1}S_2-S_3)E_0\;\;{\rm are\; invertible\;if} \;K_2\neq\emptyset,\\\label{non-degenerate assumption2}
 & R\;{\rm is\; invertible\;if} \;K_2=\emptyset,
\end{align}
  where $E_0$ is defined in (\ref{E0}) and
$R:=S_1E_1+E_1-I_d$. In particular, the assumption holds for any $\mathbf{A}\in\mathcal{O}_K^\mathbb{C}$ when $K_1=\{1,\cdots,d\}$ and $K_2=\emptyset$.
 For any $\pmb\omega=(P^{-1},Q,W)\in\Omega_N^\mathbb{C}$, we have by (\ref{matrix-trans-number-eigenvalue}) that
\begin{align*}
  (A_1P_0^{-1}+B_1,\,B_2)=\left(
                          \begin{array}{cc}
                            RP_0^{-1} +S_1(I_d-E_1)+E_1& S_2(I_d-E_2) \\
                           S_2^*(E_1P_0^{-1}+I_d-E_1) & S_3(I_d-E_2)+E_2 \\
                          \end{array}
                        \right).
\end{align*}
Then
 \begin{align}\label{equation transform1}
 & \begin{pmatrix} R^{-1}&0\\S_2^*E_1R^{-1}&-I_d\end{pmatrix}(A_1P_0^{-1}+B_1,\,B_2)\\
 =&     \begin{pmatrix}
                           P_0^{-1}+ R^{-1}(S_1(I_d-E_1)+E_1)& R^{-1}S_2(I_d-E_2) \\
                           S_2^*[E_1-I_d+E_1R^{-1}(S_1(I_d-E_1)+E_1)] & (S_2^*E_1R^{-1}S_2-S_3)(I_d-E_2)-E_2
                          \end{pmatrix}.\nonumber
\end{align}
Next, we analyze the partitioned structure of the matrix above.
\begin{lemma}\label{matrix-iden}
$[E_1-I_d+E_1R^{-1}(S_1(I_d-E_1)+E_1)]R^*=I_d$ and $ S_2^*E_1R^{-1}S_2-S_3\in\mathcal{H}_d(\mathbb{C})$.
\end{lemma}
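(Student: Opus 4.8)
The plan is to treat both identities as consequences of a single structural fact: $E_1$ is a Hermitian idempotent, $E_1^2=E_1=E_1^*$ (it is a $0$--$1$ diagonal matrix by (\ref{E1E2})), together with the Hermiticity of $S_1,S_3$ and the invertibility of $R=S_1E_1+E_1-I_d$ guaranteed by the non-degenerate assumption. A preliminary fact I would record at the outset is $R^*=E_1S_1+E_1-I_d$, obtained by taking adjoints and using $E_1^*=E_1$, $S_1^*=S_1$.

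For the first identity the central observation is an \emph{absorption} relation for the complementary projection $I_d-E_1$. Since $E_1(I_d-E_1)=0$, multiplying out gives $R(I_d-E_1)=-(I_d-E_1)$; applying $R^{-1}$ yields $R^{-1}(I_d-E_1)=-(I_d-E_1)$, and hence the key vanishing $E_1R^{-1}(I_d-E_1)=0$. With this in hand I would expand $MR^*$, where $M=E_1-I_d+E_1R^{-1}T$ and $T=S_1(I_d-E_1)+E_1$, in two pieces. The first piece $(E_1-I_d)R^*$ collapses to $I_d-E_1$ after using $E_1^2=E_1$. For the second piece I would first simplify $TR^*=S_1E_1+E_1S_1-S_1$ (every cross term carrying the factor $(I_d-E_1)E_1$ drops out), then use the rewriting $S_1E_1+E_1S_1-S_1=R+(E_1-I_d)(S_1-I_d)$, so that $R^{-1}TR^*=I_d+R^{-1}(E_1-I_d)(S_1-I_d)$; left-multiplying by $E_1$ and invoking $E_1R^{-1}(E_1-I_d)=0$ leaves exactly $E_1$. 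Adding the two pieces gives $MR^*=(I_d-E_1)+E_1=I_d$, which is the claim.

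For the second identity, since $S_3$ is Hermitian it suffices to show that $S_2^*E_1R^{-1}S_2$ is Hermitian. Here the crux is the commutation relation $R^*E_1=E_1R$: a direct expansion shows that both sides equal $E_1S_1E_1$. From $R^*E_1=E_1R$ I would deduce $E_1R^{-1}=(R^*)^{-1}E_1$ by left-multiplying with $(R^*)^{-1}$ and right-multiplying with $R^{-1}$. Consequently $(E_1R^{-1})^*=(R^{-1})^*E_1=(R^*)^{-1}E_1=E_1R^{-1}$, so $E_1R^{-1}$ is self-adjoint; therefore $(S_2^*E_1R^{-1}S_2)^*=S_2^*E_1R^{-1}S_2$, placing $S_2^*E_1R^{-1}S_2-S_3$ in $\mathcal{H}_d(\mathbb{C})$.

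The only real obstacle is the bookkeeping in the first identity: the temptation is to invert $R$ explicitly, but that is unnecessary and messy. The whole computation hinges on isolating the two relations $E_1R^{-1}(I_d-E_1)=0$ and $R(I_d-E_1)=-(I_d-E_1)$, after which the idempotent algebra closes cleanly. As a cross-check—and an alternative proof should the abstract manipulation prove error-prone—one may conjugate by a permutation so that $E_1=\mathrm{diag}(I_p,0)$ and write $S_1$ in the corresponding $2\times 2$ block form; then $R$, $R^{-1}$, $R^*$ and $M$ all become explicit block-triangular matrices, $R$ is invertible precisely when its upper-left block is, and both identities reduce to one-line block multiplications.
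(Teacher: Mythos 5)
Your proof is correct and is essentially the paper's argument: both are direct computations built on $E_1=E_1^*=E_1^2$, and your commutation relation $R^*E_1=E_1R$, equivalently $E_1R^{-1}=(R^*)^{-1}E_1$, is precisely the paper's key identity \eqref{iden}, from which the Hermiticity of $S_2^*E_1R^{-1}S_2-S_3$ is deduced in the same way in both treatments. The only difference is cosmetic: for the first identity you route the computation through the absorption relation $E_1R^{-1}(I_d-E_1)=0$ and the rewriting $TR^*=R+(E_1-I_d)(S_1-I_d)$, whereas the paper applies \eqref{iden} directly to collapse $E_1R^{-1}(E_1S_1-S_1+S_1E_1)$ to $E_1$.
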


\begin{proof}
Direct computation gives
\begin{align*}
[E_1-I_d+E_1R^{-1}(S_1(I_d-E_1)+E_1)]R^*=I_d-E_1+E_1R^{-1}(E_1S_1-S_1+S_1E_1).
\end{align*}
Since \begin{align}\label{iden}E_1R^{-1}=(E_1S_1+E_1-I_d)^{-1}E_1,\end{align} we have
\begin{align*}
&I_d-E_1+E_1R^{-1}(E_1S_1-S_1+S_1E_1)\\
=&I_d-E_1+(E_1S_1+E_1-I_d)^{-1}E_1(E_1S_1-S_1+S_1E_1)\\
=&I_d-E_1+(E_1S_1+E_1-I_d)^{-1}E_1S_1E_1\\
=&I_d-E_1+(E_1S_1+E_1-I_d)^{-1}(E_1S_1+E_1-I_d)E_1=I_d.\\
\end{align*}
$S_2^*E_1R^{-1}S_2-S_3\in\mathcal{H}_d(\mathbb{C})$ follows directly from (\ref{iden}).
\end{proof}
By Lemma \ref{matrix-iden}, we have $S_2^*[E_1-I_d+E_1R^{-1}(S_1(I_d-E_1)+E_1)]=(R^{-1}S_2)^*$. In the case that
$K_2\neq\emptyset$,
we define
$$\hat T_1:=R^{-1}S_2E_0,\;\hat T_2:= E_0^*(S_2^*E_1R^{-1}S_2-S_3)E_0.$$
Then by the assumption (\ref{non-degenerate assumption1}) and Lemma \ref{matrix-iden}, $\hat T_2\in\mathcal{H}_r(\mathbb{C})$ is invertible.  Direct computation implies that
\begin{align}\label{equation transform2}
&
\begin{pmatrix}
                           I_d&-\hat T_1\hat T_2^{-1}\\
                          0 & \hat T_2^{-1}
                          \end{pmatrix}\begin{pmatrix}
                           P_0^{-1}+ R^{-1}(S_1(I_d-E_1)+E_1)&\hat T_1 \\
                          \hat T_1^* & \hat T_2
                          \end{pmatrix}
                          \begin{pmatrix}
                           I_d&0\\
                          -\hat T_2^{-1}\hat T_1^* & I_r
                          \end{pmatrix}\\
 =&                         \begin{pmatrix}
                           P_0^{-1}+ R^{-1}(S_1(I_d-E_1)+E_1)-\hat T_1\hat T_2^{-1}\hat T_1^*&0 \\
                          0 &  I_r
                          \end{pmatrix}.\nonumber
\end{align}
In the case that $K_2=\emptyset$, we have $E_2=I_d$. Then the next transformation after (\ref{equation transform1}) is
\begin{align}\label{transformation2222222222}
&\begin{pmatrix}
                           P_0^{-1}+ R^{-1}(S_1(I_d-E_1)+E_1)& 0 \\
                           \tilde R & -I_d
                          \end{pmatrix}\begin{pmatrix}
                           I_d& 0 \\
                           \tilde R & -I_d
                          \end{pmatrix}\\\nonumber
                          =&\begin{pmatrix}
                           P_0^{-1}+ R^{-1}(S_1(I_d-E_1)+E_1)& 0 \\
                           0 & I_d
                          \end{pmatrix},
\end{align}
where $\tilde R= S_2^*[E_1-I_d+E_1R^{-1}(S_1(I_d-E_1)+E_1)]$.
For any  $\pmb\omega\in\Omega_N^\mathbb{C}$, we define
\begin{align}\label{Tpmbomega-def1}
T(\pmb\omega)&:=     P_0^{-1}+ R^{-1}(S_1(I_d-E_1)+E_1)-\hat T_1\hat T_2^{-1}\hat T_1^* {\rm\;\;if\;\;}K_2{\neq\emptyset},\\\label{Tpmbomega-def2}
T(\pmb\omega)&:=     P_0^{-1}+ R^{-1}(S_1(I_d-E_1)+E_1){\rm\;\;if\;\;}K_2{=\emptyset},
\end{align}
and $F(\pmb\omega):=(A_1P_0^{-1}+B_1,\,B_2)$  in both cases.
Then
$ {\rm rank}\, F(\pmb\omega)= {\rm rank}\,T(\pmb\omega)+d.$
Moreover, we have the following result.
\begin{lemma} Let  $\pmb\omega\in\Omega_N^\mathbb{C}$. Then $T(\pmb\omega)\in\mathcal{H}_{d}(\mathbb{C})$.
\end{lemma}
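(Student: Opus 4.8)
The plan is to show that in each of the two cases $T(\pmb\omega)$ is a sum of Hermitian matrices, so that it lands in $\mathcal{H}_d(\mathbb{C})$. Since $P_0^{-1}\in\mathcal{P}_d(\mathbb{C})$ is in particular Hermitian, the formulae (\ref{Tpmbomega-def1})--(\ref{Tpmbomega-def2}) reduce the claim to the Hermiticity of two blocks: the common term $R^{-1}(S_1(I_d-E_1)+E_1)$, and, when $K_2\neq\emptyset$, the correction $\hat T_1\hat T_2^{-1}\hat T_1^*$. The case $K_2=\emptyset$ is then subsumed once the first block is handled.

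First I would dispose of the correction term. By Lemma \ref{matrix-iden} we have $S_2^*E_1R^{-1}S_2-S_3\in\mathcal{H}_d(\mathbb{C})$, so its compression $\hat T_2=E_0^*(S_2^*E_1R^{-1}S_2-S_3)E_0$ is Hermitian; by the non-degenerate assumption (\ref{non-degenerate assumption1}) it is also invertible, whence $(\hat T_2^{-1})^*=(\hat T_2^*)^{-1}=\hat T_2^{-1}$. Consequently
\[
(\hat T_1\hat T_2^{-1}\hat T_1^*)^*=\hat T_1(\hat T_2^{-1})^*\hat T_1^*=\hat T_1\hat T_2^{-1}\hat T_1^*,
\]
so this block is Hermitian and contributes nothing to the obstruction.

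The crux is to prove that $R^{-1}(S_1(I_d-E_1)+E_1)\in\mathcal{H}_d(\mathbb{C})$, where $R=S_1E_1+E_1-I_d$ is invertible by the non-degeneracy hypothesis. Writing $X:=S_1(I_d-E_1)+E_1$, so that $X^*=(I_d-E_1)S_1+E_1$ and $R^*=E_1S_1+E_1-I_d$, I would establish the commutation identity $XR^*=RX^*$. This is a direct computation in the spirit of the proof that $S^D_K(\mathbf{A})\in\mathcal{H}_{d+r}(\mathbb{C})$: expanding both products and repeatedly using $E_1^2=E_1$ together with $S_1=S_1^*$, $E_1=E_1^*$, both sides collapse to the single expression $S_1E_1+E_1S_1-S_1$. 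Once $XR^*=RX^*$ is in hand, multiplying on the left by $R^{-1}$ and on the right by $(R^*)^{-1}$ yields $R^{-1}X=X^*(R^*)^{-1}=(R^{-1}X)^*$, i.e.\ $R^{-1}X$ is Hermitian.

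Combining the three observations, $T(\pmb\omega)$ is a sum of Hermitian matrices in both cases and therefore lies in $\mathcal{H}_d(\mathbb{C})$. The only non-routine point I expect is the commutation identity $XR^*=RX^*$; everything else follows from closure of $\mathcal{H}_d(\mathbb{C})$ under sums, under compressions $E_0^*(\cdot)E_0$, and under inversion, together with the data already supplied by Lemma \ref{matrix-iden} and the non-degeneracy assumption.
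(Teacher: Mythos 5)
Your proposal is correct and follows essentially the same route as the paper's own proof: the commutation identity $XR^*=RX^*$, with both sides collapsing to $S_1E_1+E_1S_1-S_1$ via $E_1^2=E_1$, is exactly the identity the paper computes (written there as $(S_1-S_1E_1+E_1)R^*=S_1E_1+E_1S_1-S_1=R(S_1-E_1S_1+E_1)$), and your disposal of the correction term $\hat T_1\hat T_2^{-1}\hat T_1^*$ via the Hermiticity and invertibility of $\hat T_2$ matches the paper's appeal to Lemma \ref{matrix-iden} and assumption (\ref{non-degenerate assumption1}). No gaps.
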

\begin{proof}
Since
\begin{align*}(S_1-S_1E_1+E_1)R^*=&S_1E_1+E_1S_1-S_1
=R(S_1-E_1S_1+E_1),\end{align*}
and $R$ is invertible, we have
$$R^{-1}(S_1-S_1E_1+E_1)=(S_1-E_1S_1+E_1)(R^*)^{-1}.$$
This implies that $ P_0^{-1}+ R^{-1}(S_1(I_d-E_1)+E_1)\in\mathcal{H}_{d}(\mathbb{C})$.
Since $\hat T_2\in\mathcal{H}_r(\mathbb{C})$ when $K_2\neq\emptyset$, we get $T(\pmb\omega)\in \mathcal{H}_{d}(\mathbb{C})$.\end{proof}
Let
\begin{align*}
l_1=\max_{\pmb\omega\in\Omega_N^\mathbb{C}}r^0(F(\pmb\omega)).
\end{align*}
Then $l_1\leq d$. Define
\begin{align}
\mathcal{E}_{k}:=&\{\pmb\omega\in\Omega_N^\mathbb{C}\mid r^0( F(\pmb\omega))=k\}, \; 0\leq k\leq l_1,\label{layer-equa} \\
\mathcal{E}^{(r^0,r^+,r^-)} :=&\{\pmb\omega\in\Omega_N^\mathbb{C}\mid r^0=r^0(T(\pmb\omega)), \,r^\pm=r^\pm(T(\pmb\omega))\},\label{layer-area}
\end{align}
for attainable  nonnegative integers $r^0, r^+, r^-$ with $ r^0+ r^++ r^-=d$. The $l_1+1$ layers of $\Omega_N^\mathbb{C}$ are given in (\ref{layer-equa}), while the areas' division is provided in (\ref{layer-area}). Note here that not all the  nonnegative integers $0\leq r^0,r^\pm\leq d$ satisfying $r^0+r^++r^-=d$ can be achievable in general, since $P_0^{-1}\in \mathcal{P}_d(\mathbb{C})$ while it is not necessary that   $P_0^{-1}-T(\pmb\omega)\in \mathcal{P}_d(\mathbb{C})$.
Similarly, it is possible that $l_1<d$. The following result is a direct consequence of Lemma \ref{number of eigenvalues}.
\begin{lemma}\label{enumber of eigenvalue}
 $\sharp_1(\sigma(\pmb\omega,\mathbf{A}))=Nd-k$ for any $\pmb\omega\in \mathcal{E}_k$,  and
$\sharp_1(\sigma(\pmb\omega,\mathbf{A}))= Nd-r^0$ for any $\pmb\omega\in \mathcal{E}^{(r^0,r^+,r^-)}$.
\end{lemma}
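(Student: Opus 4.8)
The plan is to deduce both equalities directly from the rank formula of Lemma \ref{number of eigenvalues}, which states $\sharp_1(\sigma(\pmb\omega,\mathbf{A}))=(N-2)d+{\rm rank}\,F(\pmb\omega)$, where $F(\pmb\omega)=(A_1P_0^{-1}+B_1,B_2)$ is the $2d\times 2d$ matrix recording the eigenvalue count. The only structural input I need is that, for this square matrix, the quantity $r^0(F(\pmb\omega))$ appearing in (\ref{layer-equa}) is the nullity, so that $r^0(F(\pmb\omega))=2d-{\rm rank}\,F(\pmb\omega)$; this is exactly the convention forced by the parallel statement for $\mathcal{B}_k$ in Lemma \ref{number eigenvalue Bk}.

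For the first assertion I would argue as follows. Fix $\pmb\omega\in\mathcal{E}_k$. By definition (\ref{layer-equa}), $r^0(F(\pmb\omega))=k$, hence ${\rm rank}\,F(\pmb\omega)=2d-k$. Substituting into Lemma \ref{number of eigenvalues} gives $\sharp_1(\sigma(\pmb\omega,\mathbf{A}))=(N-2)d+(2d-k)=Nd-k$.

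For the second assertion I would invoke the rank identity ${\rm rank}\,F(\pmb\omega)={\rm rank}\,T(\pmb\omega)+d$, already obtained above the statement via the invertible reductions (\ref{equation transform1})--(\ref{transformation2222222222}): these multiply $F(\pmb\omega)$ on the left and right by invertible matrices and isolate a full-rank $d$-block (the $I_d$ or $I_r$ blocks produced alongside the congruence yielding $T(\pmb\omega)$), so the rank is preserved and equals ${\rm rank}\,T(\pmb\omega)+d$. Since $T(\pmb\omega)\in\mathcal{H}_d(\mathbb{C})$, its rank equals $r^+(T(\pmb\omega))+r^-(T(\pmb\omega))=d-r^0(T(\pmb\omega))$. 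On the area $\mathcal{E}^{(r^0,r^+,r^-)}$ defined in (\ref{layer-area}) we have $r^0(T(\pmb\omega))=r^0$, whence ${\rm rank}\,T(\pmb\omega)=d-r^0$ and ${\rm rank}\,F(\pmb\omega)=2d-r^0$. Lemma \ref{number of eigenvalues} then yields $\sharp_1(\sigma(\pmb\omega,\mathbf{A}))=(N-2)d+(2d-r^0)=Nd-r^0$, completing the argument.

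The computation carries no real difficulty once the rank relation ${\rm rank}\,F={\rm rank}\,T+d$ is available; the single point that genuinely requires the hypotheses is that this relation rests on the invertibility of $R$ and $\hat T_2$ (respectively $R$ alone when $K_2=\emptyset$), which is precisely what the non-degenerate conditions (\ref{non-degenerate assumption1})--(\ref{non-degenerate assumption2}) guarantee. As that reduction has already been performed before the statement, the lemma is indeed an immediate corollary of Lemma \ref{number of eigenvalues}, and I expect no further obstacle.
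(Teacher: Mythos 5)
Your proposal is correct and follows exactly the paper's route: the paper presents this lemma as a direct consequence of Lemma \ref{number of eigenvalues}, and your argument simply spells that consequence out, using the identity ${\rm rank}\,F(\pmb\omega)={\rm rank}\,T(\pmb\omega)+d$ already established before the statement, the fact that ${\rm rank}\,T(\pmb\omega)=d-r^0(T(\pmb\omega))$ for the Hermitian matrix $T(\pmb\omega)$, and the (correct) reading of $r^0(F(\pmb\omega))$ as the nullity $2d-{\rm rank}\,F(\pmb\omega)$, consistent with Lemma \ref{number eigenvalue Bk}.
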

Note that the transformations (\ref{equation transform1}), (\ref{equation transform2}) and (\ref{transformation2222222222}) are independent of $\pmb\omega\in\Omega_N^\mathbb{C}$.
Moreover, the following result holds by the construction of $T(\pmb\omega)$ and a similar argument as that in the proof of Lemma 7.2 in \cite{Hu-Liu-Wu-Zhu2018}.
\begin{lemma}\label{equation neighborhood}
Let $\pmb\omega\in \mathcal{E}^{(r_1^0,r_1^+,r_1^-)}$. Then
$\mathcal{V}_\varepsilon^{(r^0,r^+,r^-)}:=\{\pmb\sigma\in\Omega_N^\mathbb{C}:\| \pmb\sigma- \pmb\omega\|_{\C^{(3N+1)d^2}}<\varepsilon\}\cap \mathcal{E}^{(r^0,r^+,r^-)}$
is path connected for any $0\leq r^0\leq r_1^0$, $r^\pm\geq r_1^\pm$  satisfying $r^0+r^++r^-=d$, and $\varepsilon>0$ sufficiently small.
\end{lemma}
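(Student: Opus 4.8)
The plan is to reduce the statement to a purely matrix-theoretic fact about the local structure of $\mathcal{H}_d(\mathbb{C})$ stratified by inertia, exploiting the explicit form of $T(\pmb\omega)$. The key first observation is that, with the boundary condition $\mathbf{A}$ fixed (hence $S_1,S_2,S_3,E_1,E_0,R,\hat T_1,\hat T_2$ all constant), formulas (\ref{Tpmbomega-def1})--(\ref{Tpmbomega-def2}) give $T(\pmb\omega)=P_0^{-1}+C$, where $C\in\mathcal{H}_d(\mathbb{C})$ is independent of $\pmb\omega$. Thus $\pmb\omega\mapsto T(\pmb\omega)$ depends on $\pmb\omega$ \emph{only} through the coordinate $P_0^{-1}$, and affinely; none of $\{P_j^{-1}\}_{j\ge1}$, $\{Q_i\}$, $\{W_i\}$ enter. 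Since the defining conditions (\ref{3}) are open, for $\varepsilon$ small the ball $\mathcal{V}_\varepsilon$ is a product of convex neighborhoods in the coordinate matrices, and
\[
\mathcal{V}_\varepsilon^{(r^0,r^+,r^-)}=\{\pmb\sigma\in\mathcal{V}_\varepsilon:\ \text{the inertia of }P_0^{-1}(\pmb\sigma)+C\text{ equals }(r^0,r^+,r^-)\}.
\]

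Next I would establish the core matrix fact: writing $T_0=T(\pmb\omega)$, which has inertia $(r_1^0,r_1^+,r_1^-)$, the set $\mathcal{S}:=\{X\in\mathcal{H}_d(\mathbb{C}):\ \|X-T_0\|<\delta,\ r^0(X)=r^0,\ r^\pm(X)=r^\pm\}$ is path connected for $\delta$ small, whenever $r^0\le r_1^0$ and $r^\pm\ge r_1^\pm$. By continuity of eigenvalues, for $\delta$ small every such $X$ keeps exactly $r_1^+$ eigenvalues near the positive spectrum of $T_0$ and $r_1^-$ near its negative spectrum (all of fixed sign), while its remaining $r_1^0$ eigenvalues lie near $0$; hence $r^\pm\ge r_1^\pm$, $r^0\le r_1^0$ is precisely the attainable range and the extra signature is carried entirely by the near-zero block. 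Using the analytic spectral projection $\pi(X)$ onto the eigenvalues of $X$ in a fixed small disc about $0$ (well defined by the spectral gap of $T_0$), the inertia of $X$ equals $(0,r_1^+,r_1^-)$ plus the inertia of the compression of $X$ to the $r_1^0$-dimensional range of $\pi(X)$, which ranges over a full neighborhood of $0$ in $\mathcal{H}_{r_1^0}(\mathbb{C})$. One is thus reduced to showing that the inertia stratum in $\mathcal{H}_{r_1^0}(\mathbb{C})$ with signature $(r^0,\,r^+-r_1^+,\,r^--r_1^-)$ meets every small ball about $0$ in a connected set. That stratum is a single congruence orbit of the connected group $\mathrm{GL}_{r_1^0}(\mathbb{C})$ acting by $\hat X\mapsto P^*\hat X P$ (Sylvester's law), hence connected, and it is invariant under positive scaling $\hat X\mapsto t\hat X$; a radial retraction then shows its intersection with any ball about $0$ is path connected. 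This is a direct reprise of the argument in Lemma 7.2 of \cite{Hu-Liu-Wu-Zhu2018}.

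Finally I would lift the matrix fact back to $\mathcal{V}_\varepsilon^{(r^0,r^+,r^-)}$. Choose the continuous section $s(X)$ sending $X\in\mathcal{S}$ to the point of $\Omega_N^\mathbb{C}$ with $P_0^{-1}=X-C$ and all other coordinates equal to those of $\pmb\omega$; for $\delta$ small its image lies in $\mathcal{V}_\varepsilon^{(r^0,r^+,r^-)}$. Given $\pmb\sigma\in\mathcal{V}_\varepsilon^{(r^0,r^+,r^-)}$, the straight-line homotopy in the non-$P_0^{-1}$ coordinates from $\pmb\sigma$ to $s(T(\pmb\sigma))$ keeps $P_0^{-1}$, hence $T$, fixed, so it stays inside $\mathcal{V}_\varepsilon^{(r^0,r^+,r^-)}$ by convexity of the fibers; and two section points $s(X_1),s(X_2)$ are joined by $s$ applied to a path in $\mathcal{S}$. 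Concatenating these three pieces connects any two points of $\mathcal{V}_\varepsilon^{(r^0,r^+,r^-)}$.

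The main obstacle is the middle step, the local path-connectedness of the inertia stratum near the degenerate matrix $T_0$. The difficulty is that restricting to a small ball about $T_0$ destroys the global homogeneity that makes the full stratum obviously connected, so one must honestly track how the $r_1^0$ vanishing eigenvalues redistribute into positive, negative, and zero directions; the spectral-projection reduction to the near-zero block, combined with the cone/scaling argument, is exactly what repairs this, in parallel with \cite{Hu-Liu-Wu-Zhu2018}. The remaining steps are routine consequences of the affine dependence $T(\pmb\omega)=P_0^{-1}+C$ and the product structure of $\mathcal{V}_\varepsilon$.
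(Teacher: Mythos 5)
Your proposal is correct and follows essentially the same route as the paper: the paper's proof consists precisely of observing that the transformations defining $T(\pmb\omega)$ are independent of $\pmb\omega$ — so that $T(\pmb\omega)=P_0^{-1}+C$ with $C$ determined by the fixed $\mathbf{A}$ — and then invoking "a similar argument as that in the proof of Lemma 7.2 in \cite{Hu-Liu-Wu-Zhu2018}", which is exactly your reduction to the local inertia stratification of $\mathcal{H}_d(\mathbb{C})$ near $T(\pmb\omega)$. Your additional details (reduction to the near-zero block, Sylvester congruence-orbit connectivity plus radial scaling, and the straight-line homotopies in the non-$P_0^{-1}$ coordinates via the isometric section) are a faithful fleshing-out of that cited argument rather than a different method.
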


\begin{theorem}\label{main result equation discrete case} Fix   $\mathbf{A}\in\mathcal{O}_K^\mathbb{C}$.

(1) Let $0\leq k\leq l_1$. Then the restriction of $\lambda_n$ to $\mathcal{E}_k$ is continuous for any $1\leq n\leq Nd-k$.

(2) Consider the restriction of  $\lambda_n$ to $\Omega_N^\mathbb{C}$.
Let $0\le r^0< r^0_1\leq l_1$ and $ r^\pm\geq r_1^\pm$.
Then  for any
 $\pmb\omega\in \mathcal{E}^{(r_1^0,r_1^+,r_1^-)}$, we have
\begin{align}\label{eboth bigger1}
   & \lim_{\mathcal{E}^{(r^0,r^+,r^-)}\ni \pmb{\sigma}\to \pmb{\omega}}\ld_{n}(\pmb\sigma)=-\infty, \quad 1\le n\le r^--r_1^-, \\
   \label{eboth bigger2} & \lim_{\mathcal{E}^{(r^0,r^+,r^-)}\ni\pmb{\sigma}\to \pmb{\omega}}\ld_{n}(\pmb\sigma)
    =\ld_{n-(r^--r^-_1)}(\pmb\omega), \quad r^--r_1^-< n\le Nd-r^0-( r^+-r_1^+),\\
   \label{eboth bigger3}  & \lim_{\mathcal{E}^{(r^0,r^+,r^-)}\ni\pmb{\sigma}\to \pmb{\omega}}\ld_{n}(\pmb{\sigma})=+\infty, \quad Nd-r^0-( r^+-r_1^+)< n\le Nd-r^0.
\end{align}
Consequently, the singular set is $\cup_{1\leq k\leq l_1}\mathcal{E}_k$.
\end{theorem}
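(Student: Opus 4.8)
The plan is to mirror the proof of Theorem \ref{main result discrete case}, with the boundary-condition matrix $S^D_K(\mathbf{A})$ replaced by the equation matrix $T(\pmb\omega)$. The crucial structural observation, which I would record first, is that $R$, $\hat T_1$, $\hat T_2$ are built solely from the fixed boundary data, so the non-degeneracy assumption (\ref{non-degenerate assumption1})--(\ref{non-degenerate assumption2}) guarantees $T(\pmb\omega)$ is well defined for \emph{all} $\pmb\omega$ and, more importantly, that $T(\pmb\omega)=P_0^{-1}+C$ for a constant Hermitian $C$ depending only on $\mathbf{A}$. Hence perturbing the inertia of $T(\pmb\omega)$ is identical to perturbing $P_0^{-1}$ in the same direction, and Corollary \ref{monotonicity equation} then supplies the monotonicity of the continuous eigenvalue branches. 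The one genuinely new feature compared with the boundary case is a sign reversal: since $\Lambda$ \emph{decreases} as $P_0^{-1}$ increases, raising $r^+$ forces eigenvalues to $+\infty$ while raising $r^-$ forces them to $-\infty$. This is precisely why the roles of $r^+$ and $r^-$ in (\ref{eboth bigger1})--(\ref{eboth bigger3}) are interchanged relative to (\ref{both bigger1})--(\ref{both bigger3}).

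Part (1) is immediate from Lemma \ref{continuity criteria} together with Lemma \ref{enumber of eigenvalue}. For Part (2) I would first handle the two single-sign cases. Fix $\pmb\omega\in\mathcal{E}^{(r_1^0,r_1^+,r_1^-)}$ and diagonalize $T(\pmb\omega)=U\,\mathrm{diag}(D_+,D_-,0_{r_1^0})\,U^*$ with $D_+>0$ of size $r_1^+$ and $D_-<0$ of size $r_1^-$. To realize $(r^0,r^+,r^-)$ with $r^+>r_1^+$, $r^-=r_1^-$, set $P_0^{-1}(\pmb\sigma_t)=P_0^{-1}(\pmb\omega)+t\,UPU^*$, where $P$ projects onto $r^+-r_1^+$ coordinates of the zero block, leaving every other entry of $\pmb\omega$ fixed; this keeps $P_0^{-1}$ positive definite for all $t\ge 0$ and makes $\pmb\sigma_t\in\mathcal{E}^{(r^0,r^+,r^-)}$ for $t>0$. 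As the increments are positive semi-definite, Corollary \ref{monotonicity equation} makes each $\ld_n(\pmb\sigma_t)$ non-increasing in $t$, and Lemma \ref{monotonicity-to-dis}(3) then yields $\ld_n\to\ld_n(\pmb\omega)$ for the low indices and $\ld_n\to+\infty$ for the top $r^+-r_1^+$ indices. The symmetric case $r^->r_1^-$, $r^+=r_1^+$ uses $P_0^{-1}(\pmb\sigma_t)=P_0^{-1}(\pmb\omega)-t\,UP'U^*$; here positive definiteness survives only for small $t$, which suffices, and Corollary \ref{monotonicity equation} now makes $\ld_n$ non-decreasing, so Lemma \ref{monotonicity-to-dis}(4) sends the bottom $r^--r_1^-$ eigenvalues to $-\infty$. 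Combined with Lemma \ref{infinity-finity}, these give the analogues of the boundary-case formulas (\ref{r+bigger1})--(\ref{r-bigger2}) with the signs swapped.

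In Step 2, with $r^\pm>r_1^\pm$ simultaneously, I would glue the two single-sign results exactly as in the boundary case. Fix $c_1<c_2$ bracketing $\sigma(\pmb\omega,\mathbf{A})$. Produce intermediate points $\tilde{\pmb\sigma}_1\in\mathcal{V}_\varepsilon^{(r_1^0-(r^--r_1^-),\,r_1^+,\,r^-)}$ (pushing $r^--r_1^-$ eigenvalues below $c_1$) and $\tilde{\pmb\sigma}_2\in\mathcal{V}_\varepsilon^{(r_1^0-(r^+-r_1^+),\,r^+,\,r_1^-)}$ (pushing $r^+-r_1^+$ eigenvalues above $c_2$) from the single-sign cases, then apply the single-sign result once more starting from each $\tilde{\pmb\sigma}_i$ to land inside $\mathcal{E}^{(r^0,r^+,r^-)}$, exhibiting points of $\mathcal{V}_\varepsilon^{(r^0,r^+,r^-)}$ with exactly $r^--r_1^-$ eigenvalues below $c_1$ and exactly $r^+-r_1^+$ above $c_2$. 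The count $r^0+r^++r^-=d$ gives $Nd-r^0-(r^+-r_1^+)\ge r^--r_1^-$ (it reduces to $Nd\ge r_1^0$, which holds since $r_1^0\le d$ and $N\ge 2$), so these two blocks do not collide. Since $\mathcal{V}_\varepsilon^{(r^0,r^+,r^-)}$ is path connected by Lemma \ref{equation neighborhood} and the count in $(c_1,c_2)$ is constant there, Lemma \ref{integer}(1) forces the bottom $r^--r_1^-$ branches below $c_1$ and the top $r^+-r_1^+$ branches above $c_2$ throughout; Lemma \ref{integer}(2) then gives (\ref{eboth bigger1}) and (\ref{eboth bigger3}), and Lemma \ref{infinity-finity} fills in the middle block (\ref{eboth bigger2}). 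The description of the singular set follows from (1) and (2) as in the boundary case.

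I expect the main obstacle to be the bookkeeping forced by the constraint $P_0^{-1}\in\mathcal{P}_d(\mathbb{C})$: unlike the free Hermitian parameter $S$ of the boundary case, the admissible perturbations of $T(\pmb\omega)$ toward larger $r^-$ are only those small enough to keep $P_0^{-1}$ positive definite. I would therefore need to verify that an arbitrarily small such perturbation already achieves the target inertia (it does, since inertia is locally constant under small Hermitian perturbation and the required eigenvalue displacement is infinitesimal), and to confirm that the intermediate areas invoked in Step 2 are attainable. The latter is exactly where the global well-definedness of $T(\pmb\omega)$, guaranteed by (\ref{non-degenerate assumption1})--(\ref{non-degenerate assumption2}), is essential.
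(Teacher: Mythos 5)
Your proposal is correct and follows essentially the same route as the paper's own proof: the same kernel-direction perturbations of $P_0^{-1}$ (exploiting that $T(\pmb\omega)-P_0^{-1}$ is a fixed Hermitian matrix determined by $\mathbf{A}$), the same monotonicity input from Corollary \ref{monotonicity equation} and Lemma \ref{monotonicity-to-dis}, and the same gluing of the two single-sign cases via Lemmas \ref{equation neighborhood}, \ref{integer} and \ref{infinity-finity}. The differences are cosmetic: you treat the $r^+>r_1^+$ case explicitly where the paper spells out $r^->r_1^-$, and you state explicitly the affine dependence $T(\pmb\omega)=P_0^{-1}+C$ that the paper leaves implicit.
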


\begin{proof}
By Lemma \ref{enumber of eigenvalue}, $\sharp_1(\sigma(\pmb\omega,\mathbf{A}))=Nd-k$ for any $\pmb\omega\in \mathcal{E}_k$.
It follows from Lemma \ref{continuity criteria} that (1) holds.
Choose  $c_1,c_2\in\mathbb{R}$  such that $\sharp_1(\sigma(\pmb\omega,\mathbf{A})\cap(c_1,c_2))=Nd-r_1^0$.
Then Lemmas \ref{small perturbation}, \ref{integer}  (1)   and \ref{equation neighborhood} ensure that  for  $\pmb\sigma\in\mathcal{V}_\varepsilon^{(r^0,r^+,r^-)}$ with $\varepsilon>0$ small enough, all the eigenvalues of  $(\pmb\sigma,\mathbf{A})$ outside $[c_1,c_2]$, denoted by $\tilde \lambda_1(\pmb\sigma)\leq\cdots\leq\tilde \lambda_{r_1^0-r^0}(\pmb\sigma)$, satisfy that
either $\tilde \lambda_n(\mathcal{V}_\varepsilon^{(r^0,r^+,r^-)})\subset (-\infty,c_1)$ or $\tilde \lambda_n(\mathcal{V}_\varepsilon^{(r^0,r^+,r^-)})\subset(c_2, +\infty)$, $1\leq n\leq r_1^0-r^0$.
 Then we divide our proof in two steps.



{\bf Step 1.} We show that
\begin{align}\label{er-bigger1}\lim\limits_{\mathcal{E}^{(r^0,r^+,r^-)}\ni\pmb\sigma\rightarrow\pmb\omega}\lambda_{n}(\pmb\sigma)=&-\infty,\;1\leq n\leq r^--r_1^-,\\\label{er-bigger2}
\lim\limits_{\mathcal{E}^{(r^0,r^+,r^-)}\ni\pmb\sigma\rightarrow \pmb\omega}\lambda_{n}(\pmb\sigma)=&\lambda_{n-(r^--r_1^-)}(\pmb\omega),\;r^--r_1^-< n \leq Nd-r^0\end{align}
for  $r^->r_1^-, r^+=r_1^+$; and
\begin{align}\label{er+bigger1}\lim\limits_{\mathcal{E}^{(r^0,r^+,r^-)}\ni\pmb\sigma\rightarrow \pmb\omega}\lambda_{n}(\pmb\sigma)&=\lambda_{n}(\pmb\omega),\;1\leq  n \leq (Nd-r^0)-(r^+-r_1^+),\\
\label{er+bigger2}
\lim\limits_{\mathcal{E}^{(r^0,r^+,r^-)}\ni\pmb\sigma\rightarrow\pmb\omega}\lambda_{n}(\pmb\sigma)&=+\infty,\;(Nd-r^0)-(r^+-r_1^+)< n\leq Nd-r^0
\end{align}
for $ r^-=r_1^-, r^+>r_1^+$.

We only prove (\ref{er-bigger1})--(\ref{er-bigger2}), since (\ref{er+bigger1})--(\ref{er+bigger2}) can be proved similarly.
Let $L\in\mathcal{M}_{d}(\mathbb{C})$ be a unitary matrix such that
$T(\pmb\omega)=L\,{\rm diag}\{\tilde\mu_1,\cdots,\tilde\mu_{d} \}L^*,$
where $\tilde\mu_{i}$, $1\leq i\leq d$, are the eigenvalues of $T(\pmb\omega)$ and $\tilde \mu_1=\cdots=\tilde \mu_{r_1^0}=0$. Recall that
 $P_0^{-1}(\pmb\omega)$ is used to indicate its dependence on $\pmb\omega$, while all the  components of $\pmb\omega$ except  $P_0^{-1}$ are fixed.  Define
$$P_0^{-1}(\pmb\sigma_t)=P_0^{-1}(\pmb\omega)+L\begin{pmatrix}tI_{r_1^0-r^0}&\\&0_{d-(r_1^0-r^0)}
\end{pmatrix}L^*$$
 with $t\leq 0$ small enough.
Then $\pmb\sigma_0=\pmb\omega$ and
  $\pmb{\sigma}_t\in\mathcal{V}_\varepsilon^{(r^0,r^+,r^-)}$, $t<0$.
  Since
 $P_0^{-1}(\pmb\sigma_{t_2})-P_0^{-1}(\pmb\sigma_{t_1})$
is a positive semi-definite matrix for $t_1<t_2<0$, we infer from Corollary \ref{monotonicity equation}  and Lemma  \ref{continuity criteria}  that
 $\lambda_n(\pmb\sigma_{t_1})\geq \lambda_n(\pmb\sigma_{t_2})$  for each $1\leq n \leq Nd-r^0$. Hence, by   Lemma \ref{monotonicity-to-dis} (1),
$\lim_{t\rightarrow {0}^-}\lambda_{n}(\pmb\sigma_{t})=-\infty, 1\leq n\leq r^--r_1^-.$
 Then we get by Lemma \ref{integer} that
 $\tilde {\lambda}_{n}(\mathcal{V}_\varepsilon^{(r^0,r^+,r^-)})={\lambda}_{n}(\mathcal{V}_\varepsilon^{(r^0,r^+,r^-)})\subset(-\infty,c_1)$, $1\leq n\leq r^--r_1^-$,
satisfy (\ref{er-bigger1}).
This, along with Lemma \ref{infinity-finity}, yields (\ref{er-bigger2}).

{\bf Step 2.} Show that (\ref{eboth bigger1})--(\ref{eboth bigger3}) hold
for $r^\pm>r_1^\pm$.

By (\ref{er-bigger1}) and (\ref{er+bigger2}), we have
 $\lambda_n(\tilde{\pmb\sigma}_1) \in(-\infty,c_1)$ with $1\leq n\leq r^--r_1^-$,   and $\lambda_{n}(\tilde{\pmb\sigma}_2)\in(c_2,+\infty)$ with $Nd-r_1^0< n\leq Nd-r_1^0+(r^+-r_1^+)$ for any fixed
$\tilde{\pmb\sigma}_1\in \mathcal{V}_\varepsilon^{(r_1^0-(r^--r_1^-),r_1^+,r^-)}$ and $\tilde{\pmb\sigma}_2\in \mathcal{V}_\varepsilon^{(r_1^0-(r^+-r_1^+),r^+,r_1^-)}$.
Then we infer from (\ref{er-bigger2})--(\ref{er+bigger1}) that
\begin{align*}
\lim\limits_{\mathcal{E}^{(r^0,r^+,r^-)}\ni\pmb{\sigma}\rightarrow\tilde{\pmb{\sigma}}_1}\lambda_{n}(\pmb{\sigma})
=&\lambda_n(\tilde{\pmb{\sigma}}_1),\;1\leq n\leq Nd-r^0-(r^+-r_1^+),\\
\lim\limits_{\mathcal{E}^{(r^0,r^+,r^-)}\ni\pmb{\sigma}\rightarrow\tilde{\pmb{\sigma}}_2}\lambda_{n}(\pmb{\sigma})
=&\lambda_{n-(r^--r_1^-)}(\tilde{\pmb{\sigma}}_2),\;r^--r_1^-< n\leq Nd-r^0.
\end{align*}
Since $Nd-r^0-(r^+-r_1^+)\geq r^--r_1^-$, we obtain that there exists
$\tilde{\pmb{\sigma}}_3\in \mathcal{V}_\varepsilon^{(r^0,r^+,r^-)}$ such that   $\lambda_n(\tilde{\pmb{\sigma}}_3) \in(-\infty,c_1)$ with $1\leq n\leq r^--r_1^-$, and  there exists
$\tilde{\pmb{\sigma}}_4\in \mathcal{V}_\varepsilon^{(r^0,r^+,r^-)}$ such that
 $\lambda_n(\tilde{\pmb{\sigma}}_4) \in(c_2,+\infty)$ with $Nd-r^0-(r^+-r_1^+)<n\leq Nd-r^0$.
This implies that
\begin{align*}
\sharp_1(\sigma({\pmb{\sigma}},{\mathbf{A}})\cap(-\infty,c_1))=r^--r_1^-,\;\;
\sharp_1(\sigma({\pmb{\sigma}},{\mathbf{A}})\cap(c_2,+\infty))=r^+-r_1^+
\end{align*}
for all $\pmb{\sigma}\in \mathcal{V}_\varepsilon^{(r^0,r^+,r^-)}$.
 Then Lemma \ref{integer} (2) ensures that    (\ref{eboth bigger1}) and (\ref{eboth bigger3}) hold. Finally, (\ref{eboth bigger2}) is obtained by  Lemma \ref{infinity-finity}.
\end{proof}

\section{Applications to $d$-dimensional Sturm-Liouville problems of Atkinson type}

Consider the $d$-dimensional Sturm-Liouville problem of Atkinson type with $d\geq1$. The continuous Sturm-Liouville equation is
\begin{align}\label{continuous Sturm-Liouville equation}
-(\hat Py')'+\hat Q y=\lambda \hat Wy \;\textrm{on}\;(a,b),
\end{align}
where  $\hat P, \hat Q$ and $\hat W$ are $d\times d$ Hermitian matrix-valued   functions on $[a,b]$,
and
\begin{align*}
\hat P^{-1},\hat Q, \hat W\in L((a,b),\mathbb{C}^{d\times d}).
\end{align*}
The self-adjoint boundary condition is given by
\begin{equation}\label{boundary condition continuous}
  A\left(
     \begin{array}{c}
       -y(a) \\
       y(b) \\
     \end{array}
   \right)
   +B\left(
     \begin{array}{c}
       (\hat Py')(a) \\
     ( \hat P y')(b) \\
     \end{array}
   \right)=0,
\end{equation}
where $A$  and  $B$  are $2d\times 2d$ complex  matrices, where $A$ and $B$ satisfy (\ref{4}).

(\ref{continuous Sturm-Liouville equation}) is said to be of Atkinson type if
there exists a partition of the interval $(a,b)$,
$$a=a_0<b_0<a_1<b_1<\cdots<a_N<b_N=b$$
for some $N>1$ such that
\begin{align}\label{atkinson condition1}
\hat P^{-1}\equiv0 \;\textrm{on}\;[a_{i},b_{i}],\;\hat W_{i}:=\int_{a_{i}}^{b_{i}}\hat W(s) ds\in\mathcal{P}_d(\mathbb{C}),\; 0\leq i\leq N,
\end{align}
and
\begin{align}\label{atkinson condition2}
\hat Q\equiv\hat W\equiv0 \;\textrm{on}\;[b_{j-1},a_{j}],\;\hat P^{-1}_{j}:=\int_{b_{j-1}}^{a_{j}}\hat P^{-1}(s) ds\; \textrm{is invertible},\; 1\leq j\leq N.
\end{align}
A $d$-dimensional  Sturm-Liouville problem is said to be  of Atkinson type if it consists of (\ref{continuous Sturm-Liouville equation}) of Atkinson type and a self-adjoint boundary condition. $1$-dimensional case has been studied in \cite{Ao-Sun-Zettl2015,Atkinson1,Kong-Volkmer-Zettl,Kong4}. In this section, we always assume that (\ref{continuous Sturm-Liouville equation})--(\ref{boundary condition continuous}) is of Atkinson type.
The space of  Sturm-Liouville equations of Atkinson type is
\begin{equation*}
 \hat \Omega := \{ \left(\hat P^{-1}, \hat Q,\hat W\right)\in  (L((a,b),\mathbb{C}^{d\times d}))^{3}: \, (\ref{atkinson condition1})\text{--}(\ref{atkinson condition2}) \;{\rm hold}\}
\end{equation*}
with topology induced by $(L((a,b),\mathbb{C}^{d\times d}))^{3}$.
$\pmb{\hat\omega}=\left(\hat P^{-1},\hat  Q, \hat W\right)$ is used for an element in $\hat \Omega$.
Note that the space of self-adjoint boundary conditions is also  $\mathcal{ B}^{\C}$ defined by  (\ref{the space of self-adjoint boundary conditions}).
Set
$$\hat Q_i:=\int_{a_i}^{b_i}\hat Q(s) ds,\;\;\;\;0\leq i\leq N.$$
Let $u=y$ and $v=\hat Py'$. Then (\ref{continuous Sturm-Liouville equation}) is transformed to
\begin{align}\label{one order}
\left\{
\begin{array}
[c]{llll}%
u'&=&\hat P^{-1}v,\\
v'&=&(\hat Q-\lambda \hat W)u,
\end{array}
\right.
\end{align}
on $(a,b)$.
It follows from (\ref{atkinson condition1})--(\ref{atkinson condition2}) that if $(u,v)$ is a solution of (\ref{one order}), then $u(t)\equiv u_i\in\mathcal{M}_{d\times1}$ is a constant vector on $[a_i,b_i]$, $0\leq i\leq N$, and $v(t)\equiv v_j\in\mathcal{M}_{d\times1}$ is a constant vector on $[b_{j-1},a_j]$, $1\leq j\leq N$.
Furthermore, we define
\begin{align}\label{def-u-1-uN}
v_{0}=v(a),\;v_{N+1}=v(b),\;u_{-1}=u_0-v_0,\; u_{N+1}=u_N+v_{N+1}.
\end{align}

We construct a $d$-dimensional discrete Sturm-Liouville problem as follows:
\begin{equation}\label{Sturm-Liouville equation constructed}
  -\nabla (\hat P_{i+1}\Delta u_i)+\hat Q_iu_i=\ld\hat W_iu_i,\quad 0\leq i\leq N,
\end{equation}
where $\hat P_{N+1}=\hat P_0=I_{d}$, and a  boundary condition
\begin{equation}\label{boundary condition constructed}
  A\left(
     \begin{array}{c}
       -u_0 \\
       u_N \\
     \end{array}
   \right)
   +B\left(
     \begin{array}{c}
       \Delta u_{-1} \\
       \Delta u_N \\
     \end{array}
   \right)=0,
\end{equation}
where $A$ and $B$ are given in  (\ref{boundary condition continuous}).
By writing $A$ and $B$ into the form (\ref{A1A2B1B2}), direct computation implies that (\ref{boundary condition constructed}) is equivalent to the standard discrete boundary condition:
\begin{equation}\label{boundary condition transform}
  (A_1,\;A_2)\left(
     \begin{array}{c}
       -u_{-1} \\
       u_N \\
     \end{array}
   \right)
   +(B_1-A_1,\;B_2)\left(
     \begin{array}{c}
       \Delta u_{-1} \\
       \Delta u_N \\
     \end{array}
   \right)=0.
\end{equation}
Now we show that (\ref{continuous Sturm-Liouville equation})--(\ref{boundary condition continuous}) is equivalent to the constructed discrete Sturm-Liouville problem above.

\begin{lemma}\label{atkinson-transform-discrete equa}
 (1) (\ref{Sturm-Liouville equation constructed})--(\ref{boundary condition constructed}) is a self-adjoint discrete Sturm-Liouville problem.

(2) (\ref{continuous Sturm-Liouville equation})--(\ref{boundary condition continuous}) is equivalent to (\ref{Sturm-Liouville equation constructed})--(\ref{boundary condition constructed}).
\end{lemma}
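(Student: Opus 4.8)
For (1) I would handle the equation and the boundary condition separately, using the reformulation (\ref{boundary condition transform}) that has already been derived. First, the equation (\ref{Sturm-Liouville equation constructed}) satisfies the standing hypotheses (\ref{3}): the coefficients $\hat Q_i$ and $\hat W_i$ are Hermitian because $\hat Q$ and $\hat W$ are Hermitian-valued, with $\hat W_i\in\mathcal{P}_d(\mathbb{C})$ by (\ref{atkinson condition1}); and each $\hat P_{i+1}$ is Hermitian and invertible, being either $I_d$ (for the two end indices, by the convention $\hat P_0=\hat P_{N+1}=I_d$) or the inverse of the invertible Hermitian matrix $\hat P^{-1}_j$ from (\ref{atkinson condition2}). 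After the index shift $\tilde y_i=u_{i-1}$, which places the solution on $\{0,\dots,N+2\}$, the system (\ref{Sturm-Liouville equation constructed}) is exactly of the form (\ref{Sturm-Liouville equation}) with $N$ replaced by $N+1$. It then remains to check that the rewritten boundary data $(\tilde A,\tilde B)=((A_1,A_2),(B_1-A_1,B_2))$ in (\ref{boundary condition transform}) satisfy (\ref{4}). The passage from $(A_1,A_2,B_1,B_2)$ to $(A_1,A_2,B_1-A_1,B_2)$ is right multiplication by an invertible block matrix, so $\mathrm{rank}(\tilde A,\tilde B)=\mathrm{rank}(A,B)=2d$; and expanding directly gives $\tilde A\tilde B^*-\tilde B\tilde A^*=(A_1B_1^*+A_2B_2^*)-(B_1A_1^*+B_2A_2^*)=AB^*-BA^*=0$, where the $A_1A_1^*$ terms cancel. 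This establishes self-adjointness and finishes (1).

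For (2) the idea is to exploit the Atkinson structure of (\ref{one order}) to reduce the ODE to the difference equation, producing a linear bijection between solution spaces that preserves $\lambda$ and carries (\ref{boundary condition continuous}) onto (\ref{boundary condition constructed}). Writing (\ref{continuous Sturm-Liouville equation}) as the first-order system (\ref{one order}), conditions (\ref{atkinson condition1})--(\ref{atkinson condition2}) force $u\equiv u_i$ on each mass interval $[a_i,b_i]$ and $v\equiv v_j$ on each gap $[b_{j-1},a_j]$. Integrating $u'=\hat P^{-1}v$ across a gap gives $\Delta u_{j-1}=\hat P^{-1}_j v_j$, equivalently $v_{i+1}=\hat P_{i+1}\Delta u_i$; integrating $v'=(\hat Q-\lambda\hat W)u$ across a mass interval gives $v_{i+1}-v_i=(\hat Q_i-\lambda\hat W_i)u_i$. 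Substituting $v_{i+1}=\hat P_{i+1}\Delta u_i$ and $v_i=\hat P_i\Delta u_{i-1}$ into the second identity turns it into $\nabla(\hat P_{i+1}\Delta u_i)=(\hat Q_i-\lambda\hat W_i)u_i$, which is (\ref{Sturm-Liouville equation constructed}) for $0\le i\le N$. The definitions (\ref{def-u-1-uN}) are precisely what make the endpoint identities $v_0=\Delta u_{-1}$ and $v_{N+1}=\Delta u_N$ hold (using $\hat P_0=\hat P_{N+1}=I_d$); together with $y(a)=u_0$, $y(b)=u_N$, $(\hat Py')(a)=v_0$, $(\hat Py')(b)=v_{N+1}$, the boundary condition (\ref{boundary condition continuous}) becomes exactly (\ref{boundary condition constructed}).

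Conversely, given a solution $\{u_i\}$ of (\ref{Sturm-Liouville equation constructed}) I would reconstruct $(u,v)$ by setting $u\equiv u_i$ on $[a_i,b_i]$ and, on each gap, $v\equiv v_j:=\hat P_j\Delta u_{j-1}$ with $u(t)=u_{j-1}+\big(\int_{b_{j-1}}^{t}\hat P^{-1}(s)\,ds\big)v_j$, and on each mass interval $v(t)=v_i+\big(\int_{a_i}^{t}(\hat Q-\lambda\hat W)(s)\,ds\big)u_i$. The discrete equation is exactly the compatibility required for $u$ and $v$ to be continuous at the nodes, so $(u,v)$ solves (\ref{one order}). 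These two assignments are mutually inverse linear maps, so for each $\lambda$ they give an isomorphism between the solution space of (\ref{continuous Sturm-Liouville equation}) and that of (\ref{Sturm-Liouville equation constructed}) carrying (\ref{boundary condition continuous}) onto (\ref{boundary condition constructed}); hence the two problems share the same eigenvalues with eigenfunctions in bijection, and the same multiplicities by Lemma \ref{multiplicity}.

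I expect the main obstacle to be bookkeeping rather than conceptual: keeping the endpoint quantities $v_0,v_{N+1},u_{-1},u_{N+1}$ and the conventions $\hat P_0=\hat P_{N+1}=I_d$ consistent throughout, and verifying continuity of the reconstructed $(u,v)$ at every node $a_i,b_i$, so that the correspondence is genuinely a bijection of solution spaces (in particular that the dimension count $2d$ matches on both sides).
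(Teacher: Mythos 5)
Your proposal is correct and follows essentially the same route as the paper's proof: in (1) the same right-multiplication by an invertible block matrix for the rank and the same cancellation of the $A_1A_1^*$ terms for the symmetry condition, and in (2) the same integration of (\ref{one order}) over gaps and mass intervals together with the same piecewise reconstruction of $(u,v)$ from a discrete solution. Your extra remarks (the explicit index shift to match the form (\ref{Sturm-Liouville equation}), and phrasing the equivalence as a mutually inverse pair of linear maps preserving multiplicities) are harmless elaborations of what the paper leaves implicit.
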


\begin{proof} Firstly, we show that (1) holds.
Since $A$ and $B$ satisfy (\ref{4}), we have
\begin{align*}
A_1B_1^*+A_2 B_2^*=B_1A_1^*+B_2A_2^*, \;\textrm{and}\;\; \textrm{rank} (A, B)=2d.
\end{align*}
Thus
\begin{align*}
&(A_1,A_2)\begin{pmatrix}B_1^*-A_1^*\\B_2^*\end{pmatrix}=A_1B_1^*-A_1A_1^*+A_2B_2^*\\
=&B_1A_1^*-A_1A_1^*+B_2A_2^*=(B_1-A_1,B_2)\begin{pmatrix}A_1^*
\\A_2^*\end{pmatrix},
\end{align*}
and
\begin{align*}
\textrm{rank} (A_1,A_2,B_1-A_1,B_2)=\textrm{rank} \left((A,B)\begin{pmatrix}I_d&&-I_d&\\&I_d&&\\&&I_d&\\&&&I_d\end{pmatrix}\right)=2d.
\end{align*}
It follows that  (\ref{boundary condition transform}) is a  self-adjoint boundary condition. Since
$\hat P_{j+1},\hat Q_i, \hat W_i$ are Hermitian, $\hat P_{j+1}$ is invertible, and $\hat W_i\in\mathcal{P}_d(\mathbb{C})$ for $0\leq i\leq N$ and $-1\leq j\leq N$, we have
\begin{align*}
\pmb{\tau}:=\left(\{\hat P_j^{-1}\}_{j=0}^{N+1}, \{\hat Q_i\}_{i=0}^{N}, \{\hat  W_i\}_{i=0}^{N}\right)\in\Omega_{N+1}^\C.
\end{align*}
Hence, (1) holds.

Next, we prove (2). It suffices to show that (\ref{one order}) with (\ref{boundary condition continuous})  is equivalent to (\ref{Sturm-Liouville equation constructed})--(\ref{boundary condition constructed}).  Let $(u,v)$ be a solution   of (\ref{one order}).
Since $v\equiv v_i$ is a constant vector on $[b_{i-1},a_{i}]$, we have
\begin{align*}
u_i-u_{i-1}=u(a_i)-u(b_{i-1})=\int_{b_{i-1}}^{a_i}u'(s)ds=\int_{b_{i-1}}^{a_i}\hat P^{-1}(s) v(s)ds=\hat P_i^{-1}v_i
\end{align*}
for any $1\leq i\leq N$, which, together with  (\ref{def-u-1-uN}) and the fact that $\hat P_{N+1}=\hat P_0=I_{d}$, yields that
\begin{align}\label{hat u}
\hat P_i(u_i-u_{i-1})=v_i,\;\;0\leq i\leq N+1.
\end{align}
Since $u\equiv u_j$ is a constant vector on $[a_{j},b_{j}]$, we obtain
\begin{align}\label{hat v}
&v_{j+1}-v_{j}=v(b_j)-v(a_{j})\\\nonumber
=&\int^{b_{j}}_{a_j}v'(s)ds=\int_{a_{j}}^{b_j}(\hat Q(s)-\lambda \hat W(s))u(s)ds=(\hat Q_j-\lambda \hat W_j)u_j
\end{align}
for any $0\leq j\leq N$. Then (\ref{Sturm-Liouville equation constructed}) is obtained by combining (\ref{hat u})--(\ref{hat v}).

Conversely, let $\{u_i\}_{i=-1}^{N+1}$ be a solution of (\ref{Sturm-Liouville equation constructed}) and define
$v_i=\hat P_i(u_i-u_{i-1})$ for $0\leq i\leq N+1$.
Let $u(t)=u_i$ for all $t\in[a_i,b_i]$ and $0\leq i\leq N$, $v(t)=v_j$ for all $t\in[b_{j-1},a_j]$ and $1\leq j\leq N$,  $v(a)=v_0$, and
\begin{align*}
u(t)&=u(b_{j-1})+\int_{b_{j-1}}^t\hat P^{-1}(s)v_j ds,\; t\in[b_{j-1},a_j],\\
v(t)&=v(a_{i})+\int_{a_i}^{t}(\hat Q(s)-\lambda\hat W(s))u_ids,\;t\in[a_{i},b_i].
\end{align*}
Then $(u,v)$ is a solution of (\ref{one order}).

Moreover, $y(a)=u_0$, $y(b)=u_{N}$, $(\hat P y')(a)=v(a)=u_0-u_{-1}=\Delta u_{-1}$ and $(\hat P y')(b)=v(b)=u_{N+1}-u_{N}=\Delta u_N$. Thus
(\ref{boundary condition continuous}) is equivalent to (\ref{boundary condition constructed}).
\end{proof}

 By transforming the  Sturm-Liouville problem of Atkinson type to the discrete case, we can now determine  the number of eigenvalues in the following lemma, which generalizes Theorems 2.1 and 3.1 in \cite{Kong4} for  1-dimension to any dimension.

\begin{lemma}\label{atkinson-number of eigenvalue}
 Let $(\pmb{\hat\omega},\mathbf{A})\in\hat \Omega\times\mathcal{ B}^{\C}$ with $\mathbf{A}$ given in (\ref{A1A2B1B2}). Then
the eigenvalues of $(\pmb{\hat\omega},\mathbf{A})$, including multiplicities, are the same as those of $(\pmb\tau,\mathbf{A})=(\pmb\tau,\mathbf{C})$, and
\begin{align*}
\sharp_1(\sigma(\pmb{\hat\omega},\mathbf{A}))=\sharp_1(\sigma(\pmb\tau,\mathbf{A}))=\sharp_1(\sigma(\pmb\tau,\mathbf{C}))=(N-1)d+{\rm rank}\; (B),
\end{align*}
where $\pmb\tau\in\Omega_{N+1}^\mathbb{C}$ is the transformed discrete Sturm-Liouville equation by $\pmb{\hat\omega}$, and
\begin{align*}
\mathbf{C}=[(A_1,A_2)|(B_1-A_1,B_2)].
 \end{align*}
\end{lemma}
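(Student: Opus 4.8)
The plan is to read the equality $(\pmb\tau,\mathbf{A})=(\pmb\tau,\mathbf{C})$ as the assertion that $\pmb\tau$ equipped with the boundary condition (\ref{boundary condition constructed}) built from the matrices $A,B$ is \emph{literally the same} discrete problem as $(\pmb\tau,\mathbf{C})$ written in standard form, and then to chain three identifications: the Atkinson problem, the constructed discrete problem, and finally the rank count via Lemma \ref{number of eigenvalues}. The whole argument is a careful transport of information already assembled in Lemma \ref{atkinson-transform-discrete equa}.

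First, I would invoke Lemma \ref{atkinson-transform-discrete equa}. Part (2) furnishes, for each fixed $\lambda$, a linear correspondence between solutions of (\ref{continuous Sturm-Liouville equation})--(\ref{boundary condition continuous}) and solutions of (\ref{Sturm-Liouville equation constructed})--(\ref{boundary condition constructed}): a continuous solution $y$ yields $u_i=y(a_i)$, and the explicit reconstruction in that proof inverts this assignment. Since the correspondence is linear and carries nontrivial solutions to nontrivial ones (indeed $y\equiv0$ forces $u_i=0$, and all $u_i=0$ forces $v_i=0$ and hence $y\equiv0$ upon reconstruction), it preserves the dimension of each eigenspace; thus eigenvalues and their multiplicities coincide. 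This yields $\sigma(\pmb{\hat\omega},\mathbf{A})=\sigma(\pmb\tau,\mathbf{A})$ with multiplicities, where $\pmb\tau\in\Omega_{N+1}^\C$ is the element constructed in the proof of Lemma \ref{atkinson-transform-discrete equa}.

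Next, I would record that (\ref{boundary condition constructed}) is non-standard (it involves the interior value $u_0$), and that the algebra producing (\ref{boundary condition transform}), via $u_0=u_{-1}+\Delta u_{-1}$, rewrites it as the standard self-adjoint boundary condition with matrices $(A_1,A_2)$ and $(B_1-A_1,B_2)$, that is, $\mathbf{C}$; Lemma \ref{atkinson-transform-discrete equa}(1) already guarantees $\mathbf{C}\in\mathcal{B}^\C$ is genuinely self-adjoint. Because (\ref{boundary condition constructed}) and (\ref{boundary condition transform}) have identical solution sets, $(\pmb\tau,\mathbf{A})$ and $(\pmb\tau,\mathbf{C})$ are the same problem, which justifies the middle equality.

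Finally, for the count I would apply Lemma \ref{number of eigenvalues} to $\pmb\tau\in\Omega_{N+1}^\C$ (so $N$ is replaced by $N+1$ and the leading term is $((N+1)-2)d=(N-1)d$), with boundary blocks $\tilde A_1=A_1$, $\tilde B_1=B_1-A_1$, $\tilde B_2=B_2$. The decisive simplification is that after the index shift one has $P_0^\tau=\hat P_0=I_d$, so the rank block collapses: $\tilde A_1(P_0^\tau)^{-1}+\tilde B_1=A_1+(B_1-A_1)=B_1$, whence ${\rm rank}(\tilde A_1(P_0^\tau)^{-1}+\tilde B_1,\tilde B_2)={\rm rank}(B_1,B_2)={\rm rank}(B)$, giving $(N-1)d+{\rm rank}(B)$. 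The main obstacle is bookkeeping rather than depth: correctly performing the index shift $u_i\mapsto\tilde y_{i+1}$ that turns the $(N+1)$-point problem into a bona fide element of $\Omega_{N+1}^\C$, checking that $P_0^\tau=P_{N+1}^\tau=I_d$ so that Lemma \ref{number of eigenvalues} collapses exactly to ${\rm rank}(B)$ (and not to ${\rm rank}(A_1+B_1,B_2)$, which is precisely why passing to $\mathbf{C}$ matters), and confirming that the continuous-to-discrete correspondence is multiplicity-preserving.
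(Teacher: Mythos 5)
Your proposal is correct and follows essentially the same route as the paper: invoke Lemma \ref{atkinson-transform-discrete equa} to identify $(\pmb{\hat\omega},\mathbf{A})$ with $(\pmb\tau,\mathbf{A})=(\pmb\tau,\mathbf{C})$, then apply Lemma \ref{number of eigenvalues} to $\pmb\tau\in\Omega_{N+1}^{\C}$ using $\hat P_0=I_d$ so that ${\rm rank}(A_1\hat P_0^{-1}+(B_1-A_1),B_2)={\rm rank}(B_1,B_2)={\rm rank}(B)$ and the leading term is $((N+1)-2)d=(N-1)d$. The only difference is that you spell out why the continuous-to-discrete correspondence is a linear bijection on eigenspaces (hence multiplicity-preserving), a point the paper leaves implicit in the word ``equivalent.''
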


\begin{remark}\label{rem-AC}
Note that here $\mathbf{A}$ is under the basis  $(-u_0,u_N, \Delta u_{-1},\Delta u_N)$ when we write  $(\pmb\tau,\mathbf{A})$, while
$\mathbf{C}$ is under the standard basis  $(-u_{-1},u_N, \Delta u_{-1},\Delta u_N)$ when we write $(\pmb\tau,\mathbf{C})$. In this sense, $(\pmb\tau,\mathbf{A})=(\pmb\tau,\mathbf{C})$.
\end{remark}
\begin{proof}
By Lemma \ref{atkinson-transform-discrete equa} (2), $(\pmb{\hat\omega},\mathbf{A})$ is equivalent to $(\pmb{\tau},\mathbf{A})=(\pmb{\tau},\mathbf{C})$, and   $\hat P_0=I_d$.
 Then applying Lemma \ref{number of eigenvalues} to $(\pmb{\tau},\mathbf{C})$, we have
\begin{align*}
&\sharp_1(\sigma(\pmb{\hat\omega},\mathbf{A}))= \sharp_1(\sigma(\pmb{\tau},\mathbf{C}))\\
=&((N+1)-2)d+{\rm rank} (A_1\hat P_0^{-1}+(B_1-A_1),B_2)=(N-1)d+{\rm rank} (B_1,B_2).
\end{align*}
This completes the proof.
\end{proof}
Then we study singularity of the $n$-th eigenvalue of  $d$-dimensional  Sturm-Liouville problems of Atkinson type.
  We first claim in Proposition \ref{Atkinson type equation continuous} below that there is no singularity of the $n$-th eigenvalue on the equations.
In fact, for a fixed $\mathbf{A}\in\mathcal{B}^\mathbb{C}$, we infer from Lemma \ref{atkinson-number of eigenvalue} that $\sharp_1(\sigma(\pmb{\hat\omega},\mathbf{A}))$  is independent of
$\pmb{\hat\omega}\in\hat\Omega$. This,
together with Lemma \ref{continuity criteria}, implies the following result.
\begin{proposition}\label{Atkinson type equation continuous}
Fix $\mathbf{A}=[A\;|\;B]\in\mathcal{B}^\mathbb{C}$. Then  the $n$-th eigenvalue is continuous on the whole space of  Sturm-Liouville equations of Atkinson type $ \hat \Omega$ for all $1\leq n\leq \sharp_1(\sigma(\pmb{\hat\omega},\mathbf{A}))=(N-1)d+\textrm{rank}\;(B).$
\end{proposition}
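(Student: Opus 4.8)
The plan is to transfer the continuity of $\lambda_n$ from the discrete side to the Atkinson side through the transformation constructed in Lemma~\ref{atkinson-transform-discrete equa}. First I would make explicit the map $\Phi:\hat\Omega\to\Omega_{N+1}^{\C}$ sending $\pmb{\hat\omega}=(\hat P^{-1},\hat Q,\hat W)$ to $\pmb\tau=(\{\hat P_j^{-1}\}_{j=0}^{N+1},\{\hat Q_i\}_{i=0}^{N},\{\hat W_i\}_{i=0}^{N})$, whose entries are the integrals $\int_{b_{j-1}}^{a_j}\hat P^{-1}$, $\int_{a_i}^{b_i}\hat Q$, $\int_{a_i}^{b_i}\hat W$ over fixed subintervals, together with the normalization $\hat P_0^{-1}=\hat P_{N+1}^{-1}=I_d$. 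Since integration over a fixed interval is a bounded linear operator from $L((a,b),\C^{d\times d})$ into $\C^{d\times d}$, and the topology on $\hat\Omega$ is the one induced by $(L((a,b),\C^{d\times d}))^3$ while that on $\Omega_{N+1}^{\C}$ is Euclidean, the map $\Phi$ is continuous.

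By Lemma~\ref{atkinson-transform-discrete equa}(2) together with Remark~\ref{rem-AC}, the problem $(\pmb{\hat\omega},\mathbf{A})$ has exactly the same eigenvalues, counting multiplicities, as the discrete problem $(\Phi(\pmb{\hat\omega}),\mathbf{C})$ with $\mathbf{C}=[(A_1,A_2)\,|\,(B_1-A_1,B_2)]$. Hence the $n$-th eigenvalue factors as $\lambda_n(\pmb{\hat\omega},\mathbf{A})=\lambda_n(\Phi(\pmb{\hat\omega}),\mathbf{C})$ for every admissible $n$, and it suffices to prove that the right-hand side depends continuously on $\pmb{\hat\omega}$. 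Moreover, by Lemma~\ref{atkinson-number of eigenvalue} we have $\sharp_1(\sigma(\Phi(\pmb{\hat\omega}),\mathbf{C}))\equiv (N-1)d+{\rm rank}\,(B)=:k_0$, a constant independent of $\pmb{\hat\omega}$; this reflects the normalization $\hat P_0^{-1}=I_d$, which forces the rank term in Lemma~\ref{number of eigenvalues} to collapse to ${\rm rank}\,(B)$.

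Continuity being a local property, I would fix $\pmb{\hat\omega}_0\in\hat\Omega$ and choose a connected neighborhood $\mathcal{U}\subset\hat\Omega$ of it: $\hat\Omega$ is cut out inside a linear subspace of $(L((a,b),\C^{d\times d}))^3$ by the open conditions $\hat W_i\in\mathcal{P}_d(\C)$ and $\hat P_j^{-1}$ invertible, so a sufficiently small ball about $\pmb{\hat\omega}_0$ is convex in that subspace and still satisfies the open conditions, hence is connected. Then $\mathcal{O}:=\Phi(\mathcal{U})\times\{\mathbf{C}\}$ is a connected subset of $\Omega_{N+1}^{\C}\times\mathcal{B}^{\C}$ on which $\sharp_1\equiv k_0$, so Lemma~\ref{continuity criteria} shows that each $\lambda_n|_{\mathcal{O}}$, $1\le n\le k_0$, is continuous. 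Composing with the continuous map $\Phi$ yields continuity of $\lambda_n(\cdot,\mathbf{A})$ on $\mathcal{U}$, in particular at $\pmb{\hat\omega}_0$; as $\pmb{\hat\omega}_0$ was arbitrary, $\lambda_n$ is continuous on all of $\hat\Omega$.

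The only genuinely delicate point I anticipate is the connectedness bookkeeping. The set of invertible Hermitian matrices is disconnected, its components being indexed by signature, so $\hat\Omega$ itself need not be connected and Lemma~\ref{continuity criteria} cannot be invoked on it globally; the localization to connected balls $\mathcal{U}$ in the previous paragraph is exactly what repairs this gap. Everything else, namely the continuity of $\Phi$ and the constancy of $\sharp_1$, is routine given the lemmas already in hand.
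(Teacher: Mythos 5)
Your proof is correct and takes essentially the same route as the paper's: the paper also deduces the result from the constancy of $\sharp_1(\sigma(\pmb{\hat\omega},\mathbf{A}))\equiv(N-1)d+\textrm{rank}\,(B)$ given by Lemma \ref{atkinson-number of eigenvalue}, combined with the continuity criterion of Lemma \ref{continuity criteria} via the discrete equivalence $(\pmb{\hat\omega},\mathbf{A})\sim(\pmb\tau,\mathbf{C})$. Your explicit verification that the transformation $\Phi$ is continuous and your localization to connected balls (to respect the connectedness hypothesis of Lemma \ref{continuity criteria}, since $\hat\Omega$ need not be connected) simply fill in details the paper leaves implicit.
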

Next, we consider singularity of the $n$-th eigenvalue on the boundary conditions.
Lemma \ref{atkinson-number of eigenvalue} indicates that it suffices to study singularity of the $n$-th eigenvalue of $(\pmb\tau,\mathbf{C})=(\pmb\tau,\mathbf{A})$ for the fixed $\pmb\tau$.
 The coupled term $B_1-A_1$ in the standard boundary condition $\mathbf{C}$
 makes it hard to apply
 Theorem \ref{main result discrete case} to   $(\pmb\tau,\mathbf{C})$ directly.
We shall apply the method developed  in Sections 3--4 to the discrete Sturm-Liouville problem  $(\pmb\tau,\mathbf{A})$ and provide a direct proof here.
In order to study singularity in a certain direction, we  need  the derivative formula of a continuous simple eigenvalue branch.

\begin{lemma}\label{derivative formula2}
Fix $\pmb{\tau}\in \Omega_{N+1}^\mathbb{C}$. Let $\ld_*$ be a simple eigenvalue of $(\pmb\tau, \mathbf{A})$
for  $\mathbf{A}\in \mathcal{O}_{K}^\mathbb{C}$,
$u \in l[-1, N + 1]$ be a normalized eigenfunction for $\ld_*$, and $\Ld$
be the continuous simple eigenvalue branch  through $\ld_*$.
Then we have the following derivative formula
\begin{equation*}
  d \Ld |_{\mathbf{A}}(H)=Z^*E_{K,1}^*H E_{K,1}Z
\end{equation*}
for $H\in\mathcal{H}_{2d}(\mathbb{C})$, where
\begin{equation*}
 Z^T=(-u_0^T,u_N^T,(\Delta u_{-1})^T,(\Delta u_N)^T).
 \end{equation*}
\end{lemma}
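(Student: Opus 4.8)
The plan is to follow the proof of Lemma~\ref{derivative formula1} almost verbatim, the only genuinely new features being the index offset (the eigenfunction lives on $l[-1,N+1]$ with the equation running over $0\le i\le N$) and the fact that $\mathbf{A}$ acts on the shifted boundary vector $Z$ built from $u_0$ rather than from $u_{-1}$. First I would write $\mathbf{A}=[S\,|\,I_{2d}]E_K$ with $S\in\mathcal{H}_{2d}(\mathbb{C})$ by (\ref{co-sys}), perturb to $\mathbf{B}=[S+H\,|\,I_{2d}]E_K$ with $H\in\mathcal{H}_{2d}(\mathbb{C})$, and invoke Lemma~\ref{continuous choice of eigenfunctions} to select a normalized eigenfunction $\tilde u=u_{\Lambda(\mathbf{B})}$ for $\Lambda(\mathbf{B})$ with $\tilde u\to u$ as $\mathbf{B}\to\mathbf{A}$; let $\tilde Z$ be formed from $\tilde u$ as $Z$ is from $u$. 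Both $u$ and $\tilde u$ solve (\ref{Sturm-Liouville equation constructed}), in which $\hat P_0=\hat P_{N+1}=I_d$.

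Next I would set up the discrete Lagrange identity for (\ref{Sturm-Liouville equation constructed}). With the bracket $[u_i,\tilde u_i]:=(\Delta\tilde u_i)^*\hat P_{i+1}u_i-\tilde u_i^*\hat P_{i+1}\Delta u_i$, a routine computation parallel to the one in Lemma~\ref{derivative formula1} (using that $\hat Q_i,\hat W_i$ are Hermitian and the eigenvalues are real) gives $(\Lambda(\mathbf{B})-\Lambda(\mathbf{A}))\,\tilde u_i^*\hat W_iu_i=-\Delta[u_{i-1},\tilde u_{i-1}]$ for $0\le i\le N$. Summing over $0\le i\le N$ telescopes the right-hand side, yielding $(\Lambda(\mathbf{B})-\Lambda(\mathbf{A}))\sum_{i=0}^N\tilde u_i^*\hat W_iu_i=[u_{-1},\tilde u_{-1}]-[u_N,\tilde u_N]$.

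The crux is to rewrite this boundary term so that it matches the $Z$ of the statement. Since $\hat P_0=\hat P_{N+1}=I_d$, we have $[u_{-1},\tilde u_{-1}]=(\Delta\tilde u_{-1})^*u_{-1}-\tilde u_{-1}^*\Delta u_{-1}$ and $[u_N,\tilde u_N]=(\Delta\tilde u_N)^*u_N-\tilde u_N^*\Delta u_N$. Substituting $u_{-1}=u_0-\Delta u_{-1}$ and $\tilde u_{-1}=\tilde u_0-\Delta\tilde u_{-1}$, the $\Delta u_{-1}$--$\Delta\tilde u_{-1}$ cross terms cancel and leave $[u_{-1},\tilde u_{-1}]=(\Delta\tilde u_{-1})^*u_0-\tilde u_0^*\Delta u_{-1}$, expressed purely through the entries $-u_0$ and $\Delta u_{-1}$ of $Z$; a direct expansion then identifies $[u_{-1},\tilde u_{-1}]-[u_N,\tilde u_N]=\tilde Z^*J_{2d}^*Z$. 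From here the argument is identical to Lemma~\ref{derivative formula1}: the two boundary conditions read $SE_{K,1}Z+E_{K,2}Z=0$ and $(S+H)E_{K,1}\tilde Z+E_{K,2}\tilde Z=0$, and combining these with $E_K^*J_{2d}E_K=J_{2d}$ from (\ref{EKrelation}) collapses $\tilde Z^*J_{2d}^*Z$ to $\tilde Z^*E_{K,1}^*HE_{K,1}Z$. Dividing by $\sum_{i=0}^N\tilde u_i^*\hat W_iu_i$, which tends to the normalization $\sum_{i=0}^Nu_i^*\hat W_iu_i=1$, and letting $\mathbf{B}\to\mathbf{A}$ gives $d\Lambda|_{\mathbf{A}}(H)=Z^*E_{K,1}^*HE_{K,1}Z$. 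I expect the main obstacle to be exactly the bookkeeping in this paragraph: because $\mathbf{A}$ is written in the shifted basis $(-u_0,u_N,\Delta u_{-1},\Delta u_N)$ rather than the standard $(-u_{-1},u_N,\Delta u_{-1},\Delta u_N)$, one must check carefully that after eliminating $u_{-1}$ the boundary form is precisely $\tilde Z^*J_{2d}^*Z$ for the $Z$ in the statement; every other ingredient transfers directly from Lemma~\ref{derivative formula1}.
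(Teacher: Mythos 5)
Your proposal is correct and follows essentially the same route as the paper's proof: perturb $S\mapsto S+H$, apply Lemma \ref{continuous choice of eigenfunctions}, telescope the discrete Lagrange identity to the boundary terms, rewrite $\lfloor u_{-1},\tilde u_{-1}\rfloor=(\Delta\tilde u_{-1})^*u_0-\tilde u_0^*\Delta u_{-1}$ using $\hat P_0=I_d$ so that the boundary form becomes $\tilde Z^*J_{2d}^*Z$ in the shifted basis, and then collapse it via (\ref{EKrelation}) and the two boundary conditions. The cancellation you identify as the crux (eliminating $u_{-1}$ in favor of $u_0$ and $\Delta u_{-1}$) is exactly the step the paper carries out in its displayed computation.
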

\begin{remark}
 Lemma \ref{derivative formula1} is  unable to be directly applied  here due to the different basis.
Note carefully that $Z$ in Lemma \ref{derivative formula2} is different from $Y$ in Lemma \ref{derivative formula1}.
\end{remark}
\begin{proof}
Recall that there exists $S\in\mathcal{H}_{2d}(\mathbb{C})$ such that $\mathbf{A}=[S\;|\;I_{2d}]E_K$. Let $\mathbf{B}=[S+H\;|\;I_{2d}]E_K$ with $H\in\mathcal{H}_{2d}(\mathbb{C})$.
Then by Lemma \ref{continuous choice of eigenfunctions}, there exists an eigenfunction $\tilde u=\{\tilde u_i\}_{i=-1}^{N+1}$ for $\Lambda(\mathbf{B})$ such that $\tilde u\to u$  in $\mathbb{C}^{(N+2)d}$ as $\mathbf{B}\to \mathbf{A}$.
Note that  $\tilde u$ and ${u}$ satisfy
\begin{align*}
-\nabla(\hat P_{i+1}\Delta\tilde{u}_i)+\hat Q_i\tilde{u}_i=\Lambda(\mathbf{B})\hat W_i\tilde{u}_i,\;\;
-\nabla(\hat P_{i+1} \Delta u_i)+\hat Q_i u_i=\Lambda(\mathbf{A})\hat W_i u_i,\;0\leq i\leq N,
\end{align*}
and thus
\begin{align*}
(\Lambda(\mathbf{B})-\Lambda(\mathbf{A}))\sum_{i=0}^N\tilde u_i^* \hat W_i u_i
=&\lfloor u_{-1},\tilde u_{-1}\rfloor-\lfloor u_{N},\tilde u_{N}\rfloor\\
=&\tilde u_0^*u_{-1}-\tilde u_{-1}^*u_0-\lfloor u_{N},\tilde u_{N}\rfloor\\
=&(\Delta \tilde u_{-1})^*u_0-\tilde u_0^*(\Delta u_{-1})-\lfloor u_{N},\tilde u_{N}\rfloor,
\end{align*}
where $\lfloor u_i,\tilde u_i\rfloor=(\Delta\tilde u_i)^*\hat P_{i+1}u_i-\tilde u_i^*\hat P_{i+1}\Delta u_i.$
$\mathbf{A}$ and $\mathbf{B}$ tell us that $SE_{K,1}Z+E_{K,2}Z=0$  and $(S+H)E_{K,1}\tilde Z+E_{K,2}\tilde Z=0$.
Then we infer from (\ref{EKrelation}) that
\begin{align*}
(\Lambda(\mathbf{B})-\Lambda(\mathbf{A}))\sum_{i=0}^N\tilde u_i^*\hat W_iu_i
=&\tilde Z^*E_K^*J_{2d}^*E_KZ=\tilde Z^*E_{K,1}^*H E_{K,1}Z.
\end{align*}
This completes the proof.
\end{proof}

As a consequence, we get the following conclusion.
\begin{corol}\label{e-monotonicity}
 Let $\Lambda$ be a continuous eigenvalue branch defined on $\mathcal{U}\subset \mathcal{O}_K^{\C}$.
Then $\Lambda(\mathbf{A})\leq\Lambda(\mathbf{B})$ if $\mathbf{A},\mathbf{B}\in\mathcal{U}$ and $S(\mathbf{B})-S(\mathbf{A})$ is positive semi-definite.
\end{corol}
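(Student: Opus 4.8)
The plan is to reduce the inequality to the sign of the directional derivative of $\Lambda$, using Lemma \ref{derivative formula2} as the only substantive input. Set $H:=S(\mathbf{B})-S(\mathbf{A})\in\mathcal{H}_{2d}(\C)$, which is positive semi-definite by hypothesis. Using the convexity of $\mathcal{H}_{2d}(\C)$, I would join $\mathbf{A}$ to $\mathbf{B}$ inside the chart by the real-analytic segment $\mathbf{A}_t:=[\,S(\mathbf{A})+tH\,|\,I_{2d}\,]E_K$, $t\in[0,1]$, so that $\mathbf{A}_0=\mathbf{A}$, $\mathbf{A}_1=\mathbf{B}$, and $S(\mathbf{A}_t)=S(\mathbf{A})+tH$; if the segment leaves $\mathcal{U}$, I would simply continue $\Lambda$ along it as a continuous eigenvalue branch via Lemma \ref{continuous eigenvalue branch}.

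First I would differentiate along the segment. On any subinterval where $\Lambda(\mathbf{A}_t)$ is a simple eigenvalue, the chain rule together with Lemma \ref{derivative formula2} gives
\[
\frac{d}{dt}\,\Lambda(\mathbf{A}_t)=d\Lambda|_{\mathbf{A}_t}(H)=Z_t^*E_{K,1}^*H E_{K,1}Z_t,
\]
where $Z_t$ is formed from a normalized eigenfunction at $\mathbf{A}_t$ as in the lemma. Since $H$ is positive semi-definite, the right-hand side is a nonnegative quadratic form evaluated at the vector $E_{K,1}Z_t$, hence $\Lambda(\mathbf{A}_t)$ is non-decreasing on each such subinterval. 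Chaining these intervals and invoking continuity then yields $\Lambda(\mathbf{A})=\Lambda(\mathbf{A}_0)\leq\Lambda(\mathbf{A}_1)=\Lambda(\mathbf{B})$.

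The main obstacle is the handling of parameters $t$ where $\Lambda(\mathbf{A}_t)$ fails to be simple, since Lemma \ref{derivative formula2} is stated only for simple eigenvalues. I would resolve this by observing that, along the analytic family $t\mapsto\mathbf{A}_t$, the eigenvalues are zeros of the analytic function $\Gamma_{(\pmb\tau,\mathbf{A}_t)}$ by Lemma \ref{zero}, so a continuous branch is real-analytic off a discrete, hence finite, set of exceptional $t\in[0,1]$; monotonicity holds on the complementary open intervals, and the continuity supplied by Lemma \ref{continuous eigenvalue branch} carries it across the finitely many crossing points. Alternatively, one can avoid simplicity altogether and argue exactly as for Corollary \ref{cor-monotonicity-boundary}, the sole structural ingredient being the nonnegativity of the boundary quadratic form $Z^*E_{K,1}^*H E_{K,1}Z$ whenever $H$ is positive semi-definite.
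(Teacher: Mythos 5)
Your first two paragraphs are exactly the argument the paper intends: the corollary is stated there as a direct consequence of Lemma \ref{derivative formula2}, and the intended reasoning is precisely to join $S(\mathbf{A})$ to $S(\mathbf{B})$ by the segment $S(\mathbf{A})+tH$ and integrate the nonnegative derivative $Z_t^*E_{K,1}^*HE_{K,1}Z_t$. The genuine gap is in the step you yourself single out as the main obstacle. You claim that, since eigenvalues are zeros of $\Gamma_{(\pmb\tau,\mathbf{A}_t)}$, a continuous branch is real-analytic off a discrete (hence finite) set of exceptional $t\in[0,1]$. That is true only if the discriminant of $\lambda\mapsto\Gamma_{(\pmb\tau,\mathbf{A}_t)}(\lambda)$ does not vanish identically in $t$, and it can vanish identically. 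Concretely, take $d=2$, all coefficients $\hat P_j,\hat Q_i,\hat W_i$ scalar multiples of $I_2$, $K=\emptyset$, $S(\mathbf{A})={\rm diag}(s_a,s_a,s_b,s_b)$, and $H={\rm diag}(h_a,h_a,h_b,h_b)$ with $h_a,h_b\ge 0$: then $(\pmb\tau,\mathbf{A}_t)$ is two decoupled identical copies of a scalar problem for every $t$, so every eigenvalue has multiplicity two for every $t\in[0,1]$. In this situation the exceptional set is all of $[0,1]$, Lemma \ref{derivative formula2} (which assumes simplicity) never applies along the branch, and your argument yields no information at all --- the conclusion is of course still true there, but your proof does not reach it. Your fallback (``argue exactly as for Corollary \ref{cor-monotonicity-boundary}'') does not repair this: that corollary is likewise stated in the paper without proof, and Lemma \ref{derivative formula1} behind it carries the same simplicity hypothesis, so the identical gap reappears.

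What is missing is a way to differentiate through permanent degeneracies. Two standard repairs: (i) extend Lemma \ref{derivative formula2} to an eigenvalue of multiplicity $m$, showing that the directional derivatives of the $m$ local branches in direction $H$ are the eigenvalues of the $m\times m$ Hermitian matrix $\bigl((Z^{(i)})^*E_{K,1}^*HE_{K,1}Z^{(j)}\bigr)_{i,j}$ built from an orthonormal system of eigenfunctions; this matrix is positive semi-definite whenever $H$ is, so all branches are non-decreasing; or (ii) invoke Rellich's analytic perturbation theorem (cf.\ \cite{Kato1984,Rellich1}) along the real-analytic segment $t\mapsto\mathbf{A}_t$ to select the eigenvalues in the window together with eigenfunctions real-analytically in $t$, repeat the computation in the proof of Lemma \ref{derivative formula2} for each analytic branch (simplicity is not needed once the eigenfunction varies analytically), and note that the sorted branches --- which is what $\Lambda$ locally is, by Lemma \ref{continuous eigenvalue branch} --- are sorted values of finitely many non-decreasing functions, hence non-decreasing. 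Separately, be careful with your parenthetical remark about the segment leaving $\mathcal{U}$: the segment can then cross the singular set, where by Theorem \ref{main result for Atkinson type} eigenvalues escape to $\pm\infty$, so a continuation of $\Lambda$ along the segment may fail to exist, or may arrive at an eigenvalue of $(\pmb\tau,\mathbf{B})$ different from $\Lambda(\mathbf{B})$. The safe formulation --- and the only way the corollary is actually invoked in the proof of Theorem \ref{main result for Atkinson type} --- is for pairs joined by a segment lying in the domain of the branch, with monotonicity along longer paths obtained by chaining such local applications.
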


Let the $k$-th layer in $\mathcal{B}^\mathbb{C}$ be defined as
\begin{align*}
\hat \Sigma_{k}:=&\{\mathbf{A}\in\mathcal{B}^\mathbb{C}\mid r^0( B)=k\}, \; 0\leq k\leq 2d.
\end{align*}
Then the following result is a direct consequence of Lemma \ref{atkinson-number of eigenvalue}.
\begin{corol}\label{atlinson number of eigenvalue b}
Fix  $\pmb{\hat\omega}\in \hat \Omega$. Then

 (1) $\sharp_1(\sigma(\pmb{\hat\omega},\mathbf{A}))=(N+1)d-k$ for all $\mathbf{A}\in\hat \Sigma_{k}$.

 (2) $\sharp_1(\sigma(\pmb{\hat\omega},\mathbf{A}))=(N+1)d$ for all $\mathbf{A}\in\mathcal{O}_\emptyset^{\mathbb{C}}$.
\end{corol}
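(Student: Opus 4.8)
The plan is to read off both statements directly from Lemma~\ref{atkinson-number of eigenvalue}, which already expresses $\sharp_1(\sigma(\pmb{\hat\omega},\mathbf{A}))$ in terms of $\mathrm{rank}\,(B)$. The only genuine point to settle first is that the layer index $k=r^0(B)$ in the definition of $\hat\Sigma_k$ is well defined on the quotient space $\mathcal{B}^{\C}$ and relates to $\mathrm{rank}\,(B)$ in the expected way. Since a boundary condition is the class $[A\,|\,B]=\{(TA\,|\,TB):T\in\mathrm{GL}(2d,\C)\}$ and left multiplication by an invertible $T$ preserves both the rank and the dimension of the kernel of $B$, I would record that $r^0(B)=2d-\mathrm{rank}\,(B)$ (the nullity of the not necessarily Hermitian matrix $B$) depends only on the class $\mathbf{A}$. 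Thus $\hat\Sigma_k$ is well defined, and $\mathbf{A}\in\hat\Sigma_k$ is equivalent to $\mathrm{rank}\,(B)=2d-k$.

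For part (1), I would fix $\mathbf{A}\in\hat\Sigma_k$, so that $\mathrm{rank}\,(B)=2d-k$, and substitute into the formula $\sharp_1(\sigma(\pmb{\hat\omega},\mathbf{A}))=(N-1)d+\mathrm{rank}\,(B)$ furnished by Lemma~\ref{atkinson-number of eigenvalue}. This gives
\[
\sharp_1(\sigma(\pmb{\hat\omega},\mathbf{A}))=(N-1)d+(2d-k)=(N+1)d-k,
\]
as claimed.

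For part (2), I would identify $\mathcal{O}_\emptyset^{\C}$ as the chart with $K=\emptyset$: since $E_\emptyset=I_{4d}$ by \eqref{construction of EK}, every $\mathbf{A}\in\mathcal{O}_\emptyset^{\C}$ has a representative $[\,S\,|\,I_{2d}\,]$ with $S\in\mathcal{H}_{2d}(\C)$, so $B=I_{2d}$ is invertible and $r^0(B)=0$. Hence $\mathcal{O}_\emptyset^{\C}\subseteq\hat\Sigma_0$, and part (1) with $k=0$ yields $\sharp_1(\sigma(\pmb{\hat\omega},\mathbf{A}))=(N+1)d$.

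Since everything reduces to a single substitution into the count of Lemma~\ref{atkinson-number of eigenvalue}, there is no substantial obstacle here; the only care needed is the bookkeeping that $r^0(B)$ is to be read as the nullity of $B$ and that it is constant on each $\mathrm{GL}(2d,\C)$-orbit, both of which are immediate from the invertibility of the acting matrices.
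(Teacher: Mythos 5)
Your proposal is correct and matches the paper, which offers no separate argument but simply declares the corollary a direct consequence of Lemma~\ref{atkinson-number of eigenvalue}: part (1) is exactly the substitution $\mathrm{rank}\,(B)=2d-k$ into $(N-1)d+\mathrm{rank}\,(B)$, and part (2) follows since $E_\emptyset=I_{4d}$ forces the representative $[\,S\,|\,I_{2d}\,]$ with $B=I_{2d}$. Your extra remark that $r^0(B)$ must be read as the nullity $2d-\mathrm{rank}\,(B)$ (so that $\hat\Sigma_k$ is well defined on the $\mathrm{GL}(2d,\C)$-quotient) is a sensible clarification of the paper's slightly abusive notation, and it is consistent with the rest of Section~5, where this nullity equals $r^0(S^A_K(\mathbf{A}))$ on each chart $\mathcal{O}_K^{\C}$.
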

 For a nonempty subset $K=\{n_1,\cdots,n_{m_0}\}\subset \{1,\cdots, 2d\}$, we  define  $\hat E_0=(\hat e_{n_1},\cdots,\hat e_{n_{m_0}})$,
 and \begin{align}\label{def-SK}
 S^A_K(\mathbf{A})=\hat E_0^*S(\mathbf{A})\hat E_0,\end{align}
where $\mathbf{A}\in\mathcal{O}_K^\mathbb{C}$, and $\hat e_i$ is the $i$-th column of $I_{2d}.$
The divided area is defined by
\begin{align*}
J^{(r^0,r^+,r^-)}_{{K}}
:=\{\mathbf{A}\in\mathcal{O}_{K}^{\mathbb{C}}|r^0( S^A_K(\mathbf{A}))=r^0,r^\pm( S^A_K(\mathbf{A}))=r^\pm\}
\end{align*}
for three nonnegative integers $r^0,$ $r^+$ and $r^-$ satisfying $r^0+r^++r^-=m_0$.  Then we are ready to provide the complete characterization of singularity of the $n$-th eigenvalue for the Atkinson type.

\begin{theorem}\label{main result for Atkinson type}
 Fix $\pmb{\hat\omega}\in\hat\Omega$.

 (1) Let $0\leq k\leq 2d$. Then the restriction of $\lambda_n$ to $\hat\Sigma_k$ is continuous for any $1\leq n\leq (N+1)d-k$.
Moreover,  the restriction of $\lambda_n$ to $\mathcal{O}_\emptyset^{\mathbb{C}}$ is continuous for any $1\leq n\leq (N+1)d$.

(2) Consider the restriction of $\lambda_n$ to $\mathcal{O}_K^{\mathbb{C}}$, where $\emptyset\neq K\subset\{1,\cdots,2d\}$.
 Let  $0\leq r^0<r^0_1\leq \sharp(K)$ and $r^\pm\geq r_1^\pm$. Then
for any $\mathbf{A}\in J^{(r_1^0,r_1^+,r_1^-)}_{{K}}$, we have
\begin{align}\label{atkinson both bigger1}
\lim\limits_{J^{(r^0,r^+,r^-)}_{{K}}\ni\mathbf{B}\to\mathbf{A}}\lambda_n(\mathbf{B})&=-\infty,\;1\leq n\leq r^+-r_1^+,\\
\label{atkinson both bigger2}    \lim_{J_K^{(r^0,r^+,r^-)}\ni\mathbf{B}\to\mathbf{A}}\ld_{n}(\mathbf{B})
    &=\ld_{n-(r^+-r^+_1)}(\mathbf{A}),\;  r^+-r_1^+< n\le (N+1)d-r^0-( r^--r_1^-),\\
\label{atkinson both bigger3}      \lim_{J_K^{(r^0,r^+,r^-)}\ni\mathbf{B}\to \mathbf{A}}\ld_{n}(\mathbf{B})&=+\infty, \; (N+1)d-r^0-( r^--r_1^-)< n\le (N+1)d-r^0.
\end{align}
Consequently, the singular set is $\cup_{1\leq k\leq 2d}\hat\Sigma_k$.
 \end{theorem}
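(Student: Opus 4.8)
The plan is to reduce everything to the transformed discrete problem $(\pmb\tau,\mathbf{A})$ of Lemma~\ref{atkinson-transform-discrete equa} and then to run the two-step scheme of Theorem~\ref{main result discrete case} with $S^D_K$ replaced by the principal submatrix $S^A_K(\mathbf{A})=\hat E_0^*S(\mathbf{A})\hat E_0$. The first task is to confirm that $S^A_K$ is the correct invariant. Writing $\mathbf{A}=[S\,|\,I_{2d}]E_K$ and expanding $(A,B)=(S,I_{2d})E_K$ as in Subsection~4.1, one checks column by column that every column of $B$ indexed by $i\notin K$ is the standard basis vector $\hat e_i$, while every column indexed by $i\in K$ is the $i$-th column of $S$. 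Moving the indices of $K$ to the front thus brings $B$ into the block-triangular shape $\left(\begin{smallmatrix} \hat E_0^*S\hat E_0 & 0\\ \ast & I_{2d-\sharp(K)}\end{smallmatrix}\right)$, so that $\dim\ker B=r^0(\hat E_0^*S\hat E_0)=r^0(S^A_K(\mathbf{A}))$. Combined with Lemma~\ref{atkinson-number of eigenvalue} this gives $\sharp_1(\sigma(\pmb{\hat\omega},\mathbf{A}))=(N+1)d-r^0(S^A_K(\mathbf{A}))$ on $\mathcal{O}_K^{\mathbb{C}}$, so $\sharp_1$ is constant on each area $J_K^{(r^0,r^+,r^-)}$ and on each layer $\hat\Sigma_k$. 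Part~(1), including the statement on $\mathcal{O}_\emptyset^{\mathbb{C}}$, is then immediate from Lemma~\ref{continuity criteria} and Corollary~\ref{atlinson number of eigenvalue b}, exactly as in Theorem~\ref{main result discrete case}(1).

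\textbf{Monotonicity and connectivity.} For part~(2) I would first record the two tools replacing their discrete counterparts. Monotonicity in a chart is furnished by Corollary~\ref{e-monotonicity}: if $S(\mathbf{B})-S(\mathbf{A})$ is positive semi-definite then $\Lambda(\mathbf{A})\le\Lambda(\mathbf{B})$ along any continuous branch. This rests on the derivative formula Lemma~\ref{derivative formula2}, whose use of $Z=(-u_0,u_N,\Delta u_{-1},\Delta u_N)^T$ rather than $Y$ is precisely what keeps the governing matrix equal to the plain submatrix $\hat E_0^*S\hat E_0$, free of any $\pmb{\hat\omega}$-dependent correction term. Path-connectivity of $\mathcal{U}_\varepsilon\cap J_K^{(r^0,r^+,r^-)}$ for $r^0\le r_1^0$, $r^\pm\ge r_1^\pm$ follows by the same argument as Lemma~\ref{neighborhood} (Lemma~7.2 in \cite{Hu-Liu-Wu-Zhu2018}), now applied to the inertia of the $K$-principal submatrix. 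The key observation is that a Hermitian perturbation supported on the $K$-principal submatrix of $S$ changes $S^A_K$ by exactly that perturbation while leaving the other chart data unaffected.

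\textbf{The asymptotics.} With these in place the argument transfers verbatim. In Step~1, treating first $r^+>r_1^+,\ r^-=r_1^-$, diagonalize $S^A_K(\mathbf{A})=M\,\mathrm{diag}(M_+,M_-,0_{r_1^0})M^*$ and define $\mathbf{B}_t$ by keeping $S(\mathbf{A})$ fixed off the $K$-block and setting $\hat E_0^*S(\mathbf{B}_t)\hat E_0=M\,\mathrm{diag}(M_+,M_-,tI_{r^+-r_1^+},0_{r^0})M^*$. Then $\mathbf{B}_0=\mathbf{A}$, $\mathbf{B}_t\in J_K^{(r^0,r^+,r^-)}$ for $t>0$, and $S(\mathbf{B}_{t_2})-S(\mathbf{B}_{t_1})\ge0$ for $t_2>t_1$, so Corollary~\ref{e-monotonicity} makes each $\lambda_n(\mathbf{B}_t)$ non-decreasing; Lemma~\ref{monotonicity-to-dis}(4) then forces $\lambda_n(\mathbf{B}_t)\to-\infty$ for $1\le n\le r^+-r_1^+$ as $t\to0^+$. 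Lemma~\ref{integer}(1), via path-connectivity, upgrades this to the whole area and yields \eqref{atkinson both bigger1}, and Lemma~\ref{infinity-finity} gives \eqref{atkinson both bigger2}; the sign-reversed construction (a $-tI$ block) handles $r^->r_1^+,\ r^+=r_1^+$ through Lemma~\ref{monotonicity-to-dis}(3), sending the top $r^--r_1^-$ eigenvalues to $+\infty$. In Step~2, for $r^\pm>r_1^\pm$ one composes the two one-sided constructions through intermediate areas exactly as in Theorem~\ref{main result discrete case}, using $N\ge2$ to secure enough room (the analogue of \eqref{N2}), and concludes \eqref{atkinson both bigger1}–\eqref{atkinson both bigger3} by Lemma~\ref{integer}(2) and Lemma~\ref{infinity-finity}. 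The identification of the singular set as $\cup_{1\le k\le 2d}\hat\Sigma_k$ then reads off from the jumps.

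\textbf{Main obstacle.} The genuinely new step is the identification $\dim\ker B=r^0(S^A_K(\mathbf{A}))$ together with the realization that working in the basis $(-u_0,u_N,\Delta u_{-1},\Delta u_N)$ of Lemma~\ref{derivative formula2} removes the $\pmb{\hat\omega}$-dependent correction present in the discrete $S^D_K$. This is exactly what renders the Atkinson singular set independent of the equation coefficients, in sharp contrast to Theorem~\ref{main result discrete case}; the delicate point is verifying that the coupled term $B_1-A_1$ in the standard form $\mathbf{C}$ does not reintroduce such dependence. Once this is settled, the monotonicity, connectivity, and asymptotic machinery is a direct transcription of Subsection~4.1.
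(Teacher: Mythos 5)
Your proposal is correct and follows essentially the same route as the paper: reduce to the transformed discrete problem via Lemma~\ref{atkinson-transform-discrete equa} and Lemma~\ref{atkinson-number of eigenvalue}, get part (1) from Lemma~\ref{continuity criteria} and Corollary~\ref{atlinson number of eigenvalue b}, and prove part (2) by the same two-step scheme (monotone perturbation paths built from the diagonalization of $S^A_K(\mathbf{A})$, Corollary~\ref{e-monotonicity}, Lemma~\ref{monotonicity-to-dis}, path-connectivity, then Lemmas~\ref{integer} and \ref{infinity-finity}). Your explicit verification that $\dim\ker B=r^0(S^A_K(\mathbf{A}))$ via the block-triangular form of $B$ is a detail the paper leaves implicit (it is what makes Corollary~\ref{atlinson number of eigenvalue b} and the area count consistent), and it is correct.
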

\begin{remark}
Note that $S_K^A(\mathbf{A})$ is independent of the  Sturm-Liouville equations of Atkinson type, while $S_K^D(\mathbf{A})$ defined in (\ref{Hermite})--(\ref{Hermite2}) is indeed involved heavily with the coefficient $P_0^{-1}$ of the discrete equations.
\end{remark}

\begin{proof}
We study the equivalent discrete Sturm-Liouville problem $(\pmb{\tau},\mathbf{A})$.
(1) is straightforward by  Lemma \ref{continuity criteria} and Corollary \ref{atlinson number of eigenvalue b}.
Next, we show that  (2) holds.
Choose  $c_1,c_2\in\mathbb{R}$  such that $\sharp_1(\sigma({\pmb{\tau}},\mathbf{A})\cap(c_1,c_2))=(N+1)d-r_1^0$.
 It follows from Lemma \ref{small perturbation} that
 $\sharp_1(\sigma({\pmb{\tau}},\mathbf{B})\cap(c_1,c_2))=(N+1)d-r_1^0$
with $c_1,c_2\notin\sigma({\pmb{\tau}},\mathbf{B})$ for all $\mathbf{B}\in\mathcal{U}_\varepsilon$, defined in Lemma \ref{neighborhood}, and $\varepsilon>0$ small enough.
Lemma 7.2 in \cite{Hu-Liu-Wu-Zhu2018} implies that  $\hat{\mathcal{U}}_\varepsilon^{(r^0,r^+,r^-)}=\mathcal{U}_\varepsilon\cap J_K^{(r^0,r^+,r^-)}$ is path connected.
Note that
\begin{align}\label{number out of c1c2}
\sharp_1(\sigma({\pmb{\tau}},\mathbf{B})\cap((-\infty,c_1)\cup(c_2,+\infty)))=r_1^0-r^0
\end{align}
  for  $\mathbf{B}\in\hat{\mathcal{U}}_\varepsilon^{(r^0,r^+,r^-)}$.


 We show that if  $r^+>r_1^+$ and $ r^-=r_1^-$, then (\ref{r+bigger1}$'$)--(\ref{r+bigger2}$'$) hold. Similarly, if $ r^+=r_1^+$ and $ r^->r_1^-$,
then  (\ref{r-bigger1}$'$)--(\ref{r-bigger2}$'$) hold. Here (\ref{r+bigger1}$'$)--(\ref{r-bigger2}$'$) are defined as  (\ref{r+bigger1})--(\ref{r-bigger2}) with
$\mathcal{B}_K^{(r^0,r^+,r^-)}$ and $N$  replaced by  ${J}_K^{(r^0,r^+,r^-)}$ and $N+1$.
Let  $\hat M\in\mathcal{M}_{m_0}$ be the unitary matrix such that
$ S^A_K(\mathbf{A})=\hat M{\rm diag} \{\tilde\nu_1,\cdots,\tilde \nu_{m_0}\}\hat M^*$,
where $m_0=\sharp(K)$ and $\tilde\nu_1=\cdots=\tilde\nu_{r_1^0}=0$. Define $\hat{\mathbf{B}}_t=[S(\hat{\mathbf{B}}_t)\,|\,I_{2d}]E_K$ with
$$S^A_K(\hat{\mathbf{B}}_t)= S^A_K(\mathbf{A})+\hat M\begin{pmatrix}tI_{r^+-r_1^+}&\\&0_{m_0-({r^+-r_1^+})}
\end{pmatrix}\hat M^*,$$
$t\geq0$ is sufficiently small, and $s_{ij}(\hat{\mathbf{B}}_t)=s_{ij}(\mathbf{A})$ if $i\in\{1,\cdots,2d\}\setminus K$ or $j\in\{1,\cdots,2d\}\setminus K$.
  Since
 $S(\hat{\mathbf{B}}_{t_2})-S(\hat{\mathbf{B}}_{t_1})$
is a positive semi-definite matrix for $t_2>t_1>0$, it follows from Lemma \ref{continuity criteria} and Corollary \ref{e-monotonicity} that
 $\lambda_n(\hat{\mathbf{B}}_{\cdot})$ is non-decreasing on $t\in(0,\varepsilon)$ for each $1\leq n \leq (N+1)d-r^0$, where $\varepsilon>0$ is small enough. Hence, by   Lemma \ref{monotonicity-to-dis} (4),
$\lim_{t\rightarrow {0}^+}\lambda_{n}(\hat{\mathbf{B}}_t)=-\infty, 1\leq n\leq r^+-r_1^+=r_1^0-r^0.$ This,
 along with Lemma \ref{integer}  and (\ref{number out of c1c2}), yields that
 ${\lambda}_{n}(\hat{\mathcal{U}}_\varepsilon^{(r^0,r^+,r^-)})\subset(-\infty,c_1)$, $1\leq n\leq r^+-r_1^+$,
and (\ref{r+bigger1}$'$) holds.
Then we get by Lemma \ref{infinity-finity} that (\ref{r+bigger2}$'$) holds.

Finally, we prove
(\ref{atkinson both bigger1})--(\ref{atkinson both bigger3})
for $r^\pm>r_1^\pm$.
It follows from (\ref{r+bigger1}$'$) and (\ref{r-bigger2}$'$) that
 $\lambda_n(\tilde{\mathbf{A}}_1) \in(-\infty,c_1)$ with $1\leq n\leq r^+-r_1^+$,   and $\lambda_{n}(\tilde{\mathbf{A}}_2)\in(c_2,+\infty)$ with $(N+1)d-r_1^0< n\leq (N+1)d-r_1^0+(r^--r_1^-)$ for any fixed
$\tilde{\mathbf{A}}_1\in \hat{\mathcal{U}}_\varepsilon^{(r_1^0-(r^+-r_1^+),r^+,r_1^-)}$ and $\tilde{\mathbf{A}}_2\in \hat{\mathcal{U}}_\varepsilon^{(r_1^0-(r^--r_1^-),r_1^+,r^-)}$.
Furthermore, we have by (\ref{r-bigger1}$'$)
that there exists
$\tilde{\mathbf{A}}_3\in \hat{\mathcal{U}}_\varepsilon^{(r^0,r^+,r^-)}$ such that   $\lambda_n(\tilde{\mathbf{A}}_3) \in(-\infty,c_1)$ with $1\leq n\leq r^+-r_1^+$.
It follows from (\ref{r+bigger2}$'$) that there exists
$\tilde{\mathbf{A}}_4\in\hat {\mathcal{U}}_\varepsilon^{(r^0,r^+,r^-)}$ such that
 $\lambda_n(\tilde{\mathbf{A}}_4) \in(c_2,+\infty)$ with $r^+-r_1^+\leq (N+1)d-r^0-(r^--r_1^-) <n\leq (N+1)d-r^0$.
Then we get by Lemma \ref{integer} (1) that $\lambda_n(\hat {\mathcal{U}}_\varepsilon^{(r^0,r^+,r^-)})\subset(-\infty,c_1)$ for $1\leq n\leq r^+-r_1^+$, and
$\lambda_{n}(\hat{\mathcal{U}}_\varepsilon^{(r^0,r^+,r^-)})\subset (c_2,+\infty) $ for  $(N+1)d-r^0-(r^--r_1^-)<n\leq (N+1)d-r^0$.
Thanks to Lemma \ref{integer} (2), we get  (\ref{atkinson both bigger1}) and (\ref{atkinson both bigger3}).  Then
 (\ref{atkinson both bigger2}) is a direct consequence of Lemma \ref{infinity-finity}.
The proof is complete.
\end{proof}

\section{Conclusions  and comparisons of singularity of the $n$-th eigenvalue  among   continuous case, discrete case,  and Atkinson type}

In this section, we compare singularity of the $n$-th eigenvalue among  the  Sturm-Liouville problems for the continuous case (1.1)--(1.2) in \cite{Hu-Liu-Wu-Zhu2018}, the discrete case (\ref{Sturm-Liouville equation})--(\ref{boundary condition }), and  the Atkinson type (\ref{continuous Sturm-Liouville equation})--(\ref{boundary condition continuous}).

(i). Comparison of singularity on boundary conditions.

 According to Theorem 7.1 in \cite{Hu-Liu-Wu-Zhu2018}, Theorem \ref{main result discrete case}, and Theorem \ref{main result for Atkinson type},
 the singularity on  the  boundary conditions is determined by the constructed Hermitian matrices, which are  $S_K^C(\mathbf{A})$ given in (4.2) of \cite{Hu-Liu-Wu-Zhu2018} for the continuous case,  $S_K^D(\mathbf{A})$ defined in (\ref{Hermite})--(\ref{Hermite2}) for the discrete case,  and $S_K^A(\mathbf{A})$
defined in (\ref{def-SK}) for the Atkinson type, where $\mathbf{A}\in\mathcal{O}_K^\mathbb{C}$.

For the continuous case, Theorem 7.1 in \cite{Hu-Liu-Wu-Zhu2018} tells us that  the first $m_c$ eigenvalues jump to  $-\infty$ as a path of boundary conditions from the lower layer of $\mathcal{O}_K^\mathbb{C}$ tends to a given boundary condition in the upper layer. Here the jump number $m_c$ is exactly the number of transitional eigenvalues (from positive to zero) of the determined Hermitian matrices. It is further shown that this number is the Maslov index of the path of boundary conditions in a forthcoming paper.

For the discrete case, Theorem \ref{main result discrete case} indicates that not only the first $m_d^-$ eigenvalues jump to $-\infty$, but the last  $m_d^+$ eigenvalues also blow up to $+\infty$  as a path of boundary conditions from the lower layer  tends to a given boundary condition in the upper layer. Here the jump number $m_d^-$ has the similar meaning as $m_c$ in the continuous case, while $m_d^+$ is
the number of transitional eigenvalues (from negative to zero) of the determined Hermitian matrices.

For the Atkinson type, Theorem \ref{main result for Atkinson type} renders  both similar jump phenomena to $\pm\infty$ with numbers $m_a^{\pm}$ as in the discrete case.
However, $m_d^{\pm}\neq m_a^{\pm}$ in general, which is due to the fact that  the determined Hermitian matrices are different, i.e., $S_K^D(\mathbf{A})\neq S_K^A(\mathbf{A})$.
It is also interesting to see that the determined  Hermitian matrices for the continuous case and the Atkinson type are the same, i.e., $S_K^C(\mathbf{A})=S_K^A(\mathbf{A})$. The singular set  in the Atkinson type coincides with  that in the continuous case. This, in particular, provides a direct consequence: $m_c=m_a^-$.

The determined Hermitian matrix is independent of  coefficients of the Sturm-Liouville equations for the continuous case and the Atkinson type, while the coefficient $P_0^{-1}$ involves heavily in the Hermitian matrix for the discrete case.
In addition, the order of the determined Hermitian matrix is  $d+\sharp (K_2)$ for the discrete case, while it is $\sharp( K)$ for the continuous case and the Atkinson type.
This implies that the maximal jump number in the discrete case is always no less than that in the continuous case and the Atkinson type.

(ii). Comparison of singularity on the equations.

Based on Theorem 6.1 in \cite{Hu-Liu-Wu-Zhu2018},  Theorem \ref{main result equation discrete case}, and Proposition \ref{Atkinson type equation continuous}, the $n$-th eigenvalue has no singularity on   coefficients of the Sturm-Liouville equations for the  continuous case and the Atkinson type, while indeed exhibits jump phenomena when  coefficients of the Sturm-Liouville equations vary for the discrete case.

For the discrete case, Theorem \ref{main result equation discrete case} also provides  jump phenomena to both $\pm\infty$ with jump numbers  $\tilde m_d^\pm$ as a path of equations from the lower layer of $\Omega_N^\mathbb{C}$ tends to a given equation in the upper layer. The determined Hermitian matrix is given by $T(\pmb\omega)$ defined in (\ref{Tpmbomega-def1})--(\ref{Tpmbomega-def2}). $\tilde m_d^-$ is
the number of transitional eigenvalues (from negative to zero) of the determined Hermitian matrices, while $\tilde m_d^+$ is
the number of transitional eigenvalues (from positive to zero) of the determined Hermitian matrices. Here the reverse direction for the  transitional eigenvalues in the definitions of $m_d^\pm$ and $\tilde m_d^\pm$ is essentially due to the  opposite  monotonicity of the continuous eigenvalue branches, see Corollaries \ref{monotonicity equation} and \ref{cor-monotonicity-boundary}.

(iii). Comparison of the method in the proof of singularity.

Compared with the continuous case in   \cite{Hu-Liu-Wu-Zhu2018} and \cite{Kong2},
  the singular set in the discrete case  is involved heavily  with  coefficients of the Sturm-Liouville equations. Moreover,
  the finiteness of  spectrum  for the  discrete case or the Atkinson type  makes the method for the continuous case (e.g.,  continuity principle in \cite{Hu-Liu-Wu-Zhu2018,Kong2}) invalid here.
 Compared with the
    $1$-dimensional discrete case in \cite{Zhu2},
the first difficulty is how to divide  areas in each layer of the considered space such that the $n$-th eigenvalue has the same  singularity in a given area. We study  singularity by partitioning and analyzing the local coordinate systems, and provide a Hermitian matrix which can determine the areas' division.
As mentioned in Introduction, our approach to proving the asymptotic behavior of the $n$-th eigenvalue here should be taken as
a generalization of
the method developed for $1$-dimensional discrete  case in
\cite{Zhu2} to any dimension.

Finally, we  list several determined Hermitian matrices as follows in $2$-dimension to exhibit how the difference is  between  the continuous case (Atkinson type) and the discrete case. The orders of the determined Hermitian matrices for  the discrete case are larger than those for the continuous case (Atkinson type) in (2)--(3), (5)--(6) and (8). On the other hand, these orders  are the same in (4), (7) and (9).   Even though, for example, the maximal jump number is $1$ in the continuous case and the Atkinson type, while it is $3$ in the discrete case when $K=\{3\}$. However,  it is both $2$ in any case when $K=\{1,2\}$.

Let
\begin{equation*}
 P_0^{-1}=\left(
     \begin{array}{cc}
       p_1 & p_2 \\
       \bar{p}_2 & p_3 \\
     \end{array}
   \right)
\end{equation*}
for the discrete case.

(1) $K=\emptyset$.
\begin{align*}
S^D_K(\mathbf{A})=\left(
     \begin{array}{cc}
s_{11}+\frac{p_3}{p_{1}p_3-|p_2|^2} & s_{12}-\frac{p_2}{p_{1}p_3-|p_2|^2}  \\
      \bar{s}_{12}-\frac{\bar{p}_2}{p_{1}p_3-|p_2|^2}&  s_{22}+\frac{p_1}{p_{1}p_3-|p_2|^2} \\
     \end{array}
   \right)
\end{align*}
and there are no $S^C_K(\mathbf{A})$ and $ S^A_K(\mathbf{A})$, since there is no singularity for the continuous case and the Atkinson type when $K=\emptyset$.

(2) $K=\{1\}$.
\begin{align*}
S^C_K(\mathbf{A})=S^A_K(\mathbf{A})=(s_{11}),\;\;S^D_K(\mathbf{A})=\begin{pmatrix}
     s_{11}-\frac{p_1p_3-|p_2|^2}{p_3} & s_{12}-\frac{p_2}{p_3}  \\
     \bar{ s}_{12}-\frac{\bar{p}_2}{p_3} & s_{22}+\frac{1}{p_3} \\
     \end{pmatrix}.
\end{align*}

(3) $K=\{3\}$.
\begin{align*}
S^C_K(\mathbf{A})=S^A_K(\mathbf{A})=(s_{33}),\;\;
S^D_K(\mathbf{A})=\left(
     \begin{array}{ccc}
s_{11}+\frac{p_3}{p_{1}p_3-|p_2|^2} & s_{12}-\frac{p_2}{p_{1}p_3-|p_2|^2} & s_{13} \\
      \bar{s}_{12}-\frac{\bar{p}_2}{p_{1}p_3-|p_2|^2}&  s_{22}+\frac{p_1}{p_{1}p_3-|p_2|^2}& s_{23} \\
       \bar{s}_{13}& \bar{s}_{23}& s_{33}\\
     \end{array}
   \right).
\end{align*}

(4) $K=\{1,2\}$.
\begin{align*}
S^C_K(\mathbf{A})=S^A_K(\mathbf{A})=(s_{ij})_{1\leq i,j\leq 2},\;\;
S^D_K(\mathbf{A})=\left(\begin{array}{cc}
s_{11}-p_1 & s_{12}-p_2  \\
      \bar{s}_{12}-\bar{p}_2&  s_{22}-p_3 \\
     \end{array}
   \right).
\end{align*}

(5) $K=\{1,3\}$.
\begin{align*}
S^C_K(\mathbf{A})=S^A_K(\mathbf{A})=(s_{ij})_{i,j\in\{1,3\}},\;\;
S^D_K(\mathbf{A})=\left(
     \begin{array}{ccc}
       s_{11}-\frac{p_1p_3-|p_2|^2}{p_3} & s_{12}-\frac{p_2}{p_3} &s_{13} \\
      \bar{s}_{12}-\frac{\bar{p}_2}{p_3} & s_{22}+\frac{1}{p_3} &s_{23} \\
      \bar{s}_{13} & \bar{s}_{23} &s_{33} \\
     \end{array}
   \right).
\end{align*}

(6) $K=\{3,4\}$.
\begin{align*}
S^C_K(\mathbf{A})=S^A_K(\mathbf{A})=(s_{ij})_{3\leq i,j\leq 4},
S^D_K(\mathbf{A})=\left(
     \begin{array}{cccc}
s_{11}+\frac{p_3}{p_{1}p_3-|p_2|^2} & s_{12}-\frac{p_2}{p_{1}p_3-|p_2|^2} & s_{13} & s_{14}\\
      \bar{s}_{12}-\frac{\bar{p}_2}{p_{1}p_3-|p_2|^2}&  s_{22}+\frac{p_1}{p_{1}p_3-|p_2|^2}& s_{23} & s_{24}\\
       \bar{s}_{13}& \bar{s}_{23}& s_{33}& s_{34}\\
       \bar{s}_{14} & \bar{s}_{24} &\bar{s}_{34}&s_{44}\\
     \end{array}
   \right).
\end{align*}

(7) $K=\{1,2,3\}$.
\begin{align*}
S^C_K(\mathbf{A})=S^A_K(\mathbf{A})=(s_{ij})_{1\leq i,j\leq 3},\;\;
S^D_K(\mathbf{A})=\left(\begin{array}{ccc}
s_{11}-p_1 & s_{12}-p_2  &s_{13} \\
      \bar{s}_{12}-\bar{p}_2&  s_{22}-p_3  &s_{23}\\
      \bar{ s}_{13} &\bar{s}_{23} &s_{33}\\
     \end{array}
   \right).
\end{align*}

(8) $K=\{1,3,4\}$.
\begin{align*}
S^C_K(\mathbf{A})=S^A_K(\mathbf{A})=(s_{ij})_{i,j\in\{1,3,4\}},\;\;
S^D_K(\mathbf{A})=\left(
     \begin{array}{cccc}
      s_{11}-\frac{p_1p_3-|p_2|^2}{p_3} & s_{12}-\frac{p_2}{p_3}&s_{13}&s_{14}\\
     \bar{s}_{12}-\frac{\bar{p}_2}{p_3} & s_{22}+\frac{1}{p_3} &s_{23}&s_{24}\\
      \bar{s}_{13} & \bar{s}_{23} &s_{33}&s_{34}\\
     \bar{s}_{14} & \bar{s}_{24} &\bar{s}_{34}&s_{44}\\
     \end{array}
   \right).
\end{align*}

(9) $K=\{1,2,3,4\}$.
\begin{align*}
S^C_K(\mathbf{A})=S^A_K(\mathbf{A})=(s_{ij})_{1\leq i,j\leq 4},\;\;S^D_K(\mathbf{A})=\left(\begin{array}{cccc}
s_{11}-p_1 & s_{12}-p_2  &s_{13} &s_{14}\\
      \bar{s}_{12}-\bar{p}_2&  s_{22}-p_3  &s_{23}&s_{24}\\
      \bar{ s}_{13} &\bar{s}_{23} &s_{33}&s_{34}\\
       \bar{s}_{14} & \bar{s}_{24} &\bar{s}_{34}&s_{44}\\
     \end{array}
   \right).
\end{align*}

\section*{Acknowledgement}
H. Zhu sincerely thanks Prof. Yiming  Long for his  consistent support and great help.
Part of this
work was done while H. Zhu was visiting the IMS of  CUHK; he
sincerely thanks Prof. Zhouping Xin for his invitation.
 G. Ren is partially supported by  NSFC
(No. 11571202).
 H. Zhu is partially supported  by  NSFC
(No. 11790271),  PITSP (No. BX20180151) and CPSF (No. 2018M630266).

\end{CJK*}

\end{document}